\definecolor{cadmiumgreen}{rgb}{0.0, 0.42, 0.24}
\newcommand{\changefont}{%
    \fontsize{8}{8}\selectfont
}
\theoremstyle{plain}
\newtheorem{prop}{Proposition}[section]
\newtheorem{prop-def}[prop]{Proposition-Definition}
\newtheorem{lem}[prop]{Lemma}
\newtheorem{theo}[prop]{Theorem}
\newtheorem{cor}[prop]{Corollary}
\newtheorem*{prop*}{Proposition}
\newtheorem*{prop-def*}{Proposition-Definition}
\newtheorem*{propri*}{Property}
\newtheorem*{lem*}{Lemma}
\newtheorem*{theo*}{Theorem}
\newtheorem*{cor*}{Corollary}
\newcommand{\lra}{\longrightarrow}
\newcommand{\ra}{\rightarrow}
\newcommand{\sdp}{\times\kern-.2em\vrule height1.1ex depth-.05ex}
\newcommand{\epi}{\lra \kern-.8em\ra}
\newcommand{\R}{{\mathbb R}}
\newcommand{\Z}{{\mathbb Z}}
\DeclareMathOperator{\codim}{codim}
\DeclareMathOperator{\Aut}{Aut}
\DeclareMathOperator{\Ker}{Ker}
\DeclareMathOperator{\Ima}{Im}
\DeclareMathOperator{\id}{id}
\newcommand{\E}{\mathrm{E}}
\renewcommand{\O}{\mathrm{O}}
\newcommand{\K}{\mathcal{K}}
\newcommand{\A}{\mathcal{A}}
\newcommand{\Sym}{\mathfrak{S}}
\DeclareMathOperator\conv{conv}
\DeclareMathOperator\vertices{vert}
\newlength\@SizeOfCirc%
\newcommand{\CircleArrowRight}[1]{%
    \setlength{\@SizeOfCirc}{\maxof{\widthof{#1}}{Veightof{#1}}}%
    \tikz [x=1.0ex,y=1.0ex,line width=.12ex]%
        \draw [->,anchor=center]%
            node (0,0) {#1}%
            (0,0.8\@SizeOfCirc) arc (85:-240:0.8\@SizeOfCirc);%
}%
\newcommand{\CircleArrowLeft}[1]{%
    \setlength{\@SizeOfCirc}{\maxof{\widthof{#1}}{Veightof{#1}}}%
    \tikz [x=1.0ex,y=1.0ex,line width=.12ex]%
        \draw [<-,anchor=center]%
            node (0,0) {#1}%
            (0,0.8\@SizeOfCirc) arc (85:-240:0.8\@SizeOfCirc);%
}%
\newtheorem{theoA}{Theorem}
\theoremstyle{definition}
\newtheorem*{rem*}{Remark}
\newtheorem*{definition*}{Definition}
\newtheorem*{example*}{Example}
\newtheorem*{notation*}{Notation}
\newtheorem{example}[prop]{Example}
\newtheorem{rem}[prop]{Remark}
\newtheorem{definition}[prop]{Definition}
\title[Fundamental polytope for the isometry group of an alcove]
{Fundamental polytope for the isometry group\\ of an alcove}
\author{Lucas Seco}
\address{Universidade de Brasília, Brazil}
\address{\texttt{lseco@unb.br}}
\author{Arthur Garnier}
\address{Universit\'e de Picardie Jules Verne, France}
\address{\texttt{arthur.garnier@math.cnrs.fr}}
\author{Karl-Hermann Neeb}
\address{Friedrich-Alexander-Universit\"at Erlangen-Nürnberg, Germany}
\address{\texttt{neeb@math.fau.de}}
\date{\today}
\subjclass[2020]{Primary 
17B22,  
51F15, 
20F55; 
Secondary 
57R91, 
57Q15 
}%
\begin{document}

\begin{abstract}
A fundamental alcove $\A$ is a tile in a paving of a vector space $V$ by an affine reflection group $W_{\mathrm{aff}}$. Its geometry encodes essential features of $W_{\mathrm{aff}}$, such as its affine Dynkin diagram $\widetilde{D}$ and fundamental group $\Omega$. 
In this article we investigate its full isometry group $\Aut(\A)$. 
It is well known that the isometry group of a regular polyhedron
is generated by hyperplane reflections on its faces. Being a simplex, an alcove $\A$ is the simplest of polyhedra, nevertheless it is seldom a regular one.
In our first main result we show that $\Aut(\A)$ is isomorphic to $\Aut(\widetilde{D})$. Building on this connection, we establish that $\Aut(\A)$
is an abstract Coxeter group, with generators given by affine isometric involutions of the ambient space. 
Although these involutions are seldom reflections,
our second main result leverages them to construct,
by slicing the Komrakov--Premet fundamental polytope $\K$
for the action of $\Omega$, a family of
fundamental polytopes for the action of $\Aut(\A)$ on~$\A$,
whose vertices are contained in the vertices of $\K$
and whose faces are parametrized by the so-called balanced minuscule roots, which we introduce here.  In an appendix, we discuss some related negative results on stratified centralizers and equivariant triangulations.
\end{abstract}

\maketitle

{\it Keywords:}
Alcove, isometry group,
affine root systems,
equivariant triangulations.

\setcounter{tocdepth}{1}
\tableofcontents

\section{Introduction}

Let $V$ be a finite dimensional euclidean space with inner product $(\cdot, \cdot)$, and corresponding norm $\| \cdot \|$. Denote by $\E(V) := V \rtimes \O(V)$ its Euclidean group of rigid motions, and by $\pi: \E(V) \to \O(V)$ the projection to the linear part. 
For $v \in V$ and $c \in \R$, denote by $\{ v = c \}$
the affine hyperplane $\{x \in V \mid (v,x) = c \}$.
A fundamental domain of a subgroup $G \subseteq \E(V)$ is a closed and connected subset $\mathcal{F} \subseteq V$ such that: for every $x \in V$, we have $Gx \cap \mathcal{F} \neq \emptyset$, and for every $g \in G\setminus\{1\}$,
the intersection $g \mathcal{F} \cap \mathcal{F}$ has empty interior.

A fundamental alcove $\A$ is a tile in a paving of $V$ by reflections and translations of an affine reflection group $W_{\mathrm{aff}}$, which we assume irreducible (see Section \ref{sec:prelim} for setup and notation).
In this article we investigate its full isometry group $\Aut(\A)$. 
It is well known that the isometry group of regular polyhedra is generated by affine
hyperplane reflections. Being a simplex, an alcove $\A$ is the simplest of polyhedra, nevertheless it is seldom a regular one.
In our first main result, Theorem A stated below, we show that $\Aut(\A)$ is isomorphic to the automorphism group of the affine Dynkin diagram of $W_{\mathrm{aff}}$. 
Building on this connection, we establish that $\Aut(\A)$ is an abstract Coxeter group with generators given by affine isometric involutions of the ambient space. Although these involutions are seldom reflections,
in our second main result, Theorem B stated below, we leverage them to obtain a family of fundamental polytopes for the action of $\Aut(\A)$ on $\A$.
We now proceed to state our main results.

The isometry group of a simplex $\sigma$ of $V$ is 
$$
\Aut(\sigma) := \{ \zeta \in \E(V) \mid \zeta(\sigma) = \sigma \}.
$$
Consider the simplex $\nu$ whose vertices are the inward unit normals to the faces of $\sigma$.
Given an isometry $\zeta$ of $\sigma$, its linear part $\pi(\zeta)$ is an isometry of $\nu$. Thus projection to the linear part restricts to the group homomorphism
\begin{equation}
\label{eq:angle-angle} 
\pi: \Aut(\sigma) \to \Aut(\nu)
\end{equation}
which is actually an isomorphism (see Proposition \ref{prop:iso-angle-angle}), where
$$
\Aut(\nu) = \{ \phi \in \O(V) \mid \phi(\nu) = \nu \}.
$$
It can be seen as a generalization of the angle-angle-angle criterion for congruence to a plane triangle and it provides a natural way to ``linearize'' $\Aut(\sigma)$ as a first step for its investigation.

For a fundamental alcove $\A$ of $W_{\mathrm{aff}}$, its isometry group
\begin{equation}
  \Aut(\A) = \{ \zeta \in \E(V) \mid \zeta(\A) = \A \}
\end{equation}
is related to the affine structure of $W_{\mathrm{aff}}$ as follows.
Consider the affine Dynkin diagram $\widetilde{D}$ of the affine reflections group $W_{\mathrm{aff}}$,
the Dynkin diagram $D$ of its Weyl group $W$,
and the so called\begin{footnote}{
At this point the term ``fundamental group'' may appear
misleading, but this group actually can be interpreted as a fundamental
group of a suitable centerfree compact Lie group.}\end{footnote}
fundamental group $\Omega$ of $W_{\mathrm{ext}}$.
Note that the inward normals of $\A$ give rise to a Coxeter diagram $\widetilde{C}$ which comes from the affine Dynkin diagram $\widetilde{D}$ of $W_{\mathrm{ext}}$. 
Indeed, we can project a Dynkin diagram onto a Coxeter diagram by preserving the vertices but forgetting the direction of the edges and replacing the edge weights by the corresponding labels. 

For constrast we also consider the isometry group of the corresponding Weyl chamber $\mathcal{C} = \R^+ \A$, given by
\begin{equation}
  \Aut(\mathcal{C})
  := \{ \zeta \in \E(V) \mid \zeta(\mathcal{C}) = \mathcal{C} \}
= \{ \phi \in \O(V) \mid \phi(\mathcal{C}) = \mathcal{C} \}
\end{equation}
Note that the inward normals of $\mathcal{C}$ give rise to a Coxeter diagram $C$ which comes from the Dynkin diagram $D$ of $W$.

\begin{center}
\def\svgwidth{19cm}
\begingroup%
  \makeatletter%
  \providecommand\color[2][]{%
    \errmessage{(Inkscape) Color is used for the text in Inkscape, but the package 'color.sty' is not loaded}%
    \renewcommand\color[2][]{}%
  }%
  \providecommand\transparent[1]{%
    \errmessage{(Inkscape) Transparency is used (non-zero) for the text in Inkscape, but the package 'transparent.sty' is not loaded}%
    \renewcommand\transparent[1]{}%
  }%
  \providecommand\rotatebox[2]{#2}%
  \newcommand*\fsize{\dimexpr\f@size pt\relax}%
  \newcommand*\lineheight[1]{\fontsize{\fsize}{#1\fsize}\selectfont}%
  \ifx\svgwidth\undefined%
    \setlength{\unitlength}{259.51646559bp}%
    \ifx\svgscale\undefined%
      \relax%
    \else%
      \setlength{\unitlength}{\unitlength * \real{\svgscale}}%
    \fi%
  \else%
    \setlength{\unitlength}{\svgwidth}%
  \fi%
  \global\let\svgwidth\undefined%
  \global\let\svgscale\undefined%
  \makeatother%
  \begin{picture}(1,0.21206107)%
    \lineheight{1}%
    \setlength\tabcolsep{0pt}%
    \put(0,0){\includegraphics[width=\unitlength,page=1]{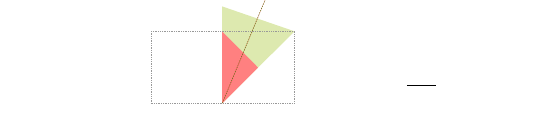}}%
    \put(0.55197217,0.02124167){\makebox(0,0)[lt]{\lineheight{1.25}\smash{\begin{tabular}[t]{l}$\alpha_1$\end{tabular}}}}%
    \put(0.25719319,0.15996124){\makebox(0,0)[lt]{\lineheight{1.25}\smash{\begin{tabular}[t]{l}$\alpha_2$\end{tabular}}}}%
    \put(0.55197217,0.15418126){\color[rgb]{0.6,0.6,0.6}\makebox(0,0)[lt]{\lineheight{1.25}\smash{\begin{tabular}[t]{l}$\alpha_0$\end{tabular}}}}%
    \put(0.42410548,0.09512864){\makebox(0,0)[lt]{\lineheight{1.25}\smash{\begin{tabular}[t]{l}$\mathcal{A}$\end{tabular}}}}%
    \put(0.43566545,0.15292845){\makebox(0,0)[lt]{\lineheight{1.25}\smash{\begin{tabular}[t]{l}$\mathcal{C}$\end{tabular}}}}%
    \put(0,0){\includegraphics[width=\unitlength,page=2]{B2-chamber-alcove.pdf}}%
    \put(0.69647173,0.14262133){\makebox(0,0)[lt]{\lineheight{1.25}\smash{\begin{tabular}[t]{l}$0$\end{tabular}}}}%
    \put(0.74849126,0.14262133){\makebox(0,0)[lt]{\lineheight{1.25}\smash{\begin{tabular}[t]{l}$1$\end{tabular}}}}%
    \put(0.80051122,0.14262133){\makebox(0,0)[lt]{\lineheight{1.25}\smash{\begin{tabular}[t]{l}$2$\end{tabular}}}}%
    \put(0,0){\includegraphics[width=\unitlength,page=3]{B2-chamber-alcove.pdf}}%
    \put(0.74849126,0.08482151){\makebox(0,0)[lt]{\lineheight{1.25}\smash{\begin{tabular}[t]{l}$1$\end{tabular}}}}%
    \put(0.80051122,0.08482151){\makebox(0,0)[lt]{\lineheight{1.25}\smash{\begin{tabular}[t]{l}$2$\end{tabular}}}}%
    \put(0,0){\includegraphics[width=\unitlength,page=4]{B2-chamber-alcove.pdf}}%
    \put(0.84675127,0.15996124){\makebox(0,0)[lt]{\lineheight{1.25}\smash{\begin{tabular}[t]{l}$\widetilde{D}$\end{tabular}}}}%
    \put(0.84675127,0.10216143){\makebox(0,0)[lt]{\lineheight{1.25}\smash{\begin{tabular}[t]{l}$D$\end{tabular}}}}%
    \put(0.84675127,0.04436165){\makebox(0,0)[lt]{\lineheight{1.25}\smash{\begin{tabular}[t]{l}$C$\end{tabular}}}}%
    \put(0,0){\includegraphics[width=\unitlength,page=5]{B2-chamber-alcove.pdf}}%
    \put(0.77739119,0.06170161){\makebox(0,0)[lt]{\lineheight{1.25}\smash{\begin{tabular}[t]{l}$\text{\tiny 4}$\end{tabular}}}}%
    \put(0.74849126,0.02702174){\makebox(0,0)[lt]{\lineheight{1.25}\smash{\begin{tabular}[t]{l}$1$\end{tabular}}}}%
    \put(0.80051122,0.02702174){\makebox(0,0)[lt]{\lineheight{1.25}\smash{\begin{tabular}[t]{l}$2$\end{tabular}}}}%
    \put(0.41828857,0.06013359){\color[rgb]{1,1,1}\makebox(0,0)[lt]{\lineheight{1.25}\smash{\begin{tabular}[t]{l}$\mathcal{K}$\end{tabular}}}}%
  \end{picture}%
\endgroup%

The situation for the root system $B_2$.
\end{center}
\vspace{5mm}

\begin{theoA}
\label{thm:A}
The automorphism groups of ${\mathcal C}, C$, $\A$, $D$ and $\widetilde{D}$
are related as follows:
\begin{enumerate}[label=\roman*)]
\item[\rm(i)] $\Aut(\mathcal{C}) = \Aut(C)$.

\item[\rm(ii)] The projection 
    $\pi: \E(V) \to \O(V)$ to the linear part restricts to
    an isomorphism $\Aut(\A) \to \Aut(\widetilde{D})$ which is
    the identity on $\Aut(D)$.

\item[\rm(iii)] $\Aut(\A) = \Omega \rtimes \Aut(D)$

\item[\rm(iv)] $\Aut(\A)$ is an abstract Coxeter group,
  generated by the affine involutions of $V$ in Table \ref{table:thm1}.

\end{enumerate}
\end{theoA}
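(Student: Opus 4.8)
The plan is to derive parts (i)--(iv) from the angle--angle--angle isomorphism (Proposition~\ref{prop:iso-angle-angle}) together with the combinatorics of the affine Dynkin diagram, in the stated order. The elementary fact used for both (i) and (ii) is: if $u_1,\dots,u_k$ span $V$, then any permutation $\tau$ of $\{1,\dots,k\}$ with $(u_{\tau(i)},u_{\tau(j)})=(u_i,u_j)$ for all $i,j$ extends to a unique linear isometry $\phi\in\O(V)$ with $\phi(u_i)=u_{\tau(i)}$ --- define $\phi$ on a spanning subset (it lies in $\O(V)$ since it preserves the Gram matrix) and check it is forced on, and automatically consistent with, the remaining $u_i$, which are determined by their inner products with a basis. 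For (i): $\mathcal{C}$ is a simplicial cone with apex $0$, so $\Aut(\mathcal{C})\subseteq\O(V)$ permutes its $n$ facet hyperplanes, hence permutes the unit inward normals $u_1,\dots,u_n$ (the normalized simple roots), preserving their Gram matrix; since $(u_i,u_j)=-\cos(\pi/m_{ij})$ is exactly the label datum of the Coxeter diagram $C$, this yields a homomorphism $\Aut(\mathcal{C})\to\Aut(C)$, injective because the $u_i$ span $V$ and surjective by the fact above (the extended $\phi$ visibly preserves $\mathcal{C}=\{x\mid (u_i,x)\ge 0\ \forall i\}$). Identifying $\Aut(C)$ with this realization gives $\Aut(\mathcal{C})=\Aut(C)$.

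For (ii), apply Proposition~\ref{prop:iso-angle-angle} to $\sigma=\A$ to get an isomorphism $\pi\colon\Aut(\A)\to\Aut(\nu)$, where $\nu$ is the simplex on the unit inward normals $u_0,\dots,u_n$ of $\A$, i.e.\ $u_i=\alpha_i/\|\alpha_i\|$ for $i\ge 1$ and $u_0=-\theta/\|\theta\|$ with $\theta$ the highest root. By definition $\Aut(\nu)$ consists of the linear isometries preserving $\nu$; these permute the vertices $u_i$ preserving their Gram matrix, and conversely every such permutation so arises by the fact above. Hence $\Aut(\nu)$ is the group of permutations of $\{0,\dots,n\}$ preserving $\bigl((u_i,u_j)\bigr)$, and as $(u_i,u_j)=-\cos(\pi/m_{ij})$ records precisely the affine Coxeter diagram $\widetilde{C}$, we get $\Aut(\nu)=\Aut(\widetilde{C})$. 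One then argues $\Aut(\widetilde{C})=\Aut(\widetilde{D})$: a symmetry of the labelled graph $\widetilde{C}$ automatically respects the arrows of $\widetilde{D}$, because on a connected affine diagram carrying a multiple bond the root lengths admit only two compatible assignments, interchanged by Langlands duality, and one checks --- the only non-obvious cases being $\widetilde{C}_n$ and $\widetilde{F}_4$ --- that this duality is never an automorphism of $\widetilde{C}$. For the last clause of (ii): an automorphism of the finite diagram $D$ is realized by a linear isometry of $V$ permuting $\alpha_1,\dots,\alpha_n$ and fixing $\theta$, hence fixing $\mathcal{C}$, $\A$ and the node $0$ of $\widetilde{D}$; being already linear, it is unchanged by $\pi$.

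For (iii), by (ii) it suffices to recall the classical identity $\Aut(\widetilde{D})=\Omega\rtimes\Aut(D)$: automorphisms of $\widetilde{D}$ fixing the node $0$ restrict to automorphisms of $D=\widetilde{D}\setminus\{0\}$, and conversely any automorphism of $D$ fixes $\theta$ and so extends over $0$, so the stabilizer of $0$ is $\Aut(D)$; the group $\Omega\cong P/Q$ embeds into $\Aut(\widetilde{D})$ --- concretely as $\{w\in W_{\mathrm{ext}}\mid w\A=\A\}$, using $W_{\mathrm{ext}}=W_{\mathrm{aff}}\rtimes\Omega$ and the simple transitivity of $W_{\mathrm{aff}}$ on alcoves --- and acts simply transitively on the $\Aut(\widetilde{D})$-stable set of special nodes, which is therefore a single orbit; orbit--stabilizer gives $|\Aut(\widetilde{D})|=|\Omega|\,|\Aut(D)|$, and with $\Omega\cap\Aut(D)=1$ and $\Aut(D)$ normalizing $\Omega$ one gets the semidirect product. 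For (iv), using (iii) I would run the classification of irreducible affine types: $\Aut(D)$ is $1$, $\Z/2$, or $\Sym_3$, and $\Omega$ is $\Z/(n+1)$ in type $A_n$ and otherwise one of $1,\Z/2,\Z/3,\Z/4,(\Z/2)^2$; for the specific pairs and actions that occur, $\Omega\rtimes\Aut(D)$ is in every case a Coxeter group --- trivial, $W(A_1)$, a dihedral group ($I_2(n+1)$ for $\widetilde{A}_n$ with $n\ge 2$, $\Sym_3=I_2(3)$ for $\widetilde{E}_6$, the order-$8$ dihedral group for $\widetilde{D}_n$ with $n\ge 5$), or $\Sym_4=W(A_3)$ for $\widetilde{D}_4$. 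In each case one reads off the Coxeter generators as explicit elements of $\E(V)$: those from $\Aut(D)$ are linear involutions, while the rest have the form $\sigma\omega$ with $\sigma\in\Aut(D)$ linear and $\omega\in\Omega$ --- genuinely affine but of order $2$; assembling these gives Table~\ref{table:thm1}.

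The main obstacle is (iv): there is no uniform argument that $\Omega\rtimes\Aut(D)$ is a Coxeter group --- already $\Omega$ alone need not be one (e.g.\ $\Z/3$ or $\Z/4$) --- so one must pass through the finite list of affine Dynkin diagrams, determine in each case the isomorphism type of $\Aut(\A)$ and a system of Coxeter generators, and transport these back through the isomorphism of (ii) into explicit affine involutions of $V$; the delicate part is the bookkeeping needed to produce a correct Table~\ref{table:thm1} and to verify the braid relations in the genuinely affine cases. A secondary point requiring attention, inside (ii), is the equality $\Aut(\widetilde{C})=\Aut(\widetilde{D})$: it holds for every irreducible reduced affine type but is not a formality, and its proof (as sketched) goes through the classification of multiple bonds.
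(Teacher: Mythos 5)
Your proposal is correct and follows essentially the same route as the paper: (i)--(ii) via the linearization $\Aut(\sigma)\cong\Aut(\nu)$ of Proposition~\ref{prop:iso-angle-angle} together with the classification-based identity $\Aut(\widetilde{C})=\Aut(\widetilde{D})$, and (iv) by case-by-case inspection of the affine types to extract the Coxeter generators. The only organizational difference is in (iii), where you obtain $\Aut(\widetilde{D})=\widetilde{W}\rtimes\Aut(D)$ by an orbit--stabilizer count on the special nodes and transport it back through $\pi$, whereas the paper constructs an explicit $\Aut(D)$-equivariant inverse $\theta=q\rtimes\id$ from the elements $\omega_j=t_{\varpi^\vee_j}v_j$ (Sections 2--3); both rest on the same classical fact that $\Omega$ acts simply transitively on the special nodes and that $\Aut(D)$ is the stabilizer of the node $-\alpha_0$.
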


\afterpage{
\begin{landscape}
\thispagestyle{lscape}
\pagestyle{lscape}
\begin{figure}
\vspace{-24pt}
\includegraphics[width=20cm]{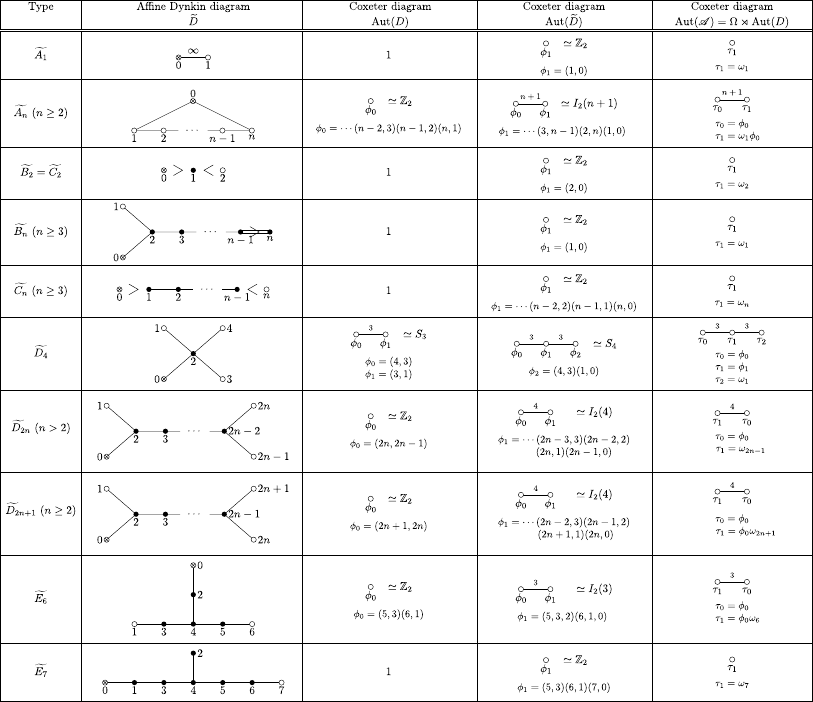}
\captionof{table}{Automorphisms of Dynkin diagrams and their
  Coxeter realizations as affine isometries of the fundamental alcove: permutations are written from right to left. $\widetilde{E_8}$, $\widetilde{F_4}$, $\widetilde{G_2}$ have trivial diagram automorphisms.}
\label{table:thm1}
\end{figure}
\end{landscape}
\clearpage
}

Thus, while the geometry of the Weyl chamber loses data from the Dynkin diagram automorphisms, the geometry of the fundamental alcove preserves this data, and even more: it preserves the data of the affine Dynkin diagram automorphisms.
This is illustrated above for type $B_2$: the (linear) symmetry of the Weyl chamber comes from $\Aut(C)$ and the (affine) symmetry of the alcove comes from $\Aut(\widetilde{D})$, while $\Aut(D) = 1$.

Even though the affine involutions that generate $\Aut(\A)$ in Theorem A are in general not hyperplane reflections, in the second main result of this article we use them
to obtain a fundamental polytope $\mathcal{L}$ for the action of $\Aut(\A)$ on $\A$, as follows. Theorem A gives us $\Aut(\A) = \Omega \rtimes \Aut(D)$. Start with a fundamental domain for the action of $\Omega$ on $\A$ given by the Komrakov--Premet polytope (see \cite{Ga24})
\begin{equation}
\label{eq:kp-domain}
\mathcal{K} :=\{x \in \A \mid (\alpha_0+\alpha_j,x)\leq 1 
~\text{ for all minuscule }\alpha_j
\} 
\end{equation}
Since $\Aut(D)$ stabilizes $\Pi$ and $\alpha_0$, it leaves $\K$ invariant.
Thus, for $\mathcal{L}$ to be a fundamental polytope for the $\Aut(\A)$ action
on $\A$, it is enough that $\mathcal{L}$ is a fundamental polytope for the $\Aut(D)$ action on $\K$.

\begin{center}
\def\svgwidth{19cm}
\begingroup%
  \makeatletter%
  \providecommand\color[2][]{%
    \errmessage{(Inkscape) Color is used for the text in Inkscape, but the package 'color.sty' is not loaded}%
    \renewcommand\color[2][]{}%
  }%
  \providecommand\transparent[1]{%
    \errmessage{(Inkscape) Transparency is used (non-zero) for the text in Inkscape, but the package 'transparent.sty' is not loaded}%
    \renewcommand\transparent[1]{}%
  }%
  \providecommand\rotatebox[2]{#2}%
  \newcommand*\fsize{\dimexpr\f@size pt\relax}%
  \newcommand*\lineheight[1]{\fontsize{\fsize}{#1\fsize}\selectfont}%
  \ifx\svgwidth\undefined%
    \setlength{\unitlength}{166.20510657bp}%
    \ifx\svgscale\undefined%
      \relax%
    \else%
      \setlength{\unitlength}{\unitlength * \real{\svgscale}}%
    \fi%
  \else%
    \setlength{\unitlength}{\svgwidth}%
  \fi%
  \global\let\svgwidth\undefined%
  \global\let\svgscale\undefined%
  \makeatother%
  \begin{picture}(1,0.28224976)%
    \lineheight{1}%
    \setlength\tabcolsep{0pt}%
    \put(0,0){\includegraphics[width=\unitlength,page=1]{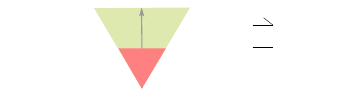}}%
    \put(0.6206375,0.1499143){\makebox(0,0)[lt]{\lineheight{1.25}\smash{\begin{tabular}[t]{l}$\alpha_1$\end{tabular}}}}%
    \put(0.40493749,0.26813944){\color[rgb]{0.6,0.6,0.6}\makebox(0,0)[lt]{\lineheight{1.25}\smash{\begin{tabular}[t]{l}$\alpha_0$\end{tabular}}}}%
    \put(0,0){\includegraphics[width=\unitlength,page=2]{A2-vertex-dynkin.pdf}}%
    \put(0.17662773,0.1499143){\makebox(0,0)[lt]{\lineheight{1.25}\smash{\begin{tabular}[t]{l}$\alpha_2$\end{tabular}}}}%
    \put(0.41436352,0.07309756){\makebox(0,0)[lt]{\lineheight{1.25}\smash{\begin{tabular}[t]{l}$\mathcal{L}$\end{tabular}}}}%
    \put(0,0){\includegraphics[width=\unitlength,page=3]{A2-vertex-dynkin.pdf}}%
    \put(0.40105926,0.11835835){\makebox(0,0)[lt]{\lineheight{1.25}\smash{\begin{tabular}[t]{l}$\mathcal{A}$\end{tabular}}}}%
    \put(0.38654852,0.07309756){\color[rgb]{1,1,1}\makebox(0,0)[lt]{\lineheight{1.25}\smash{\begin{tabular}[t]{l}$\mathcal{K}$\end{tabular}}}}%
    \put(0.30178424,0.23568331){\makebox(0,0)[lt]{\lineheight{1.25}\smash{\begin{tabular}[t]{l}$\mathcal{C}$\end{tabular}}}}%
    \put(0,0){\includegraphics[width=\unitlength,page=4]{A2-vertex-dynkin.pdf}}%
    \put(0.72510184,0.18644){\makebox(0,0)[lt]{\lineheight{1.25}\smash{\begin{tabular}[t]{l}$1$\end{tabular}}}}%
    \put(0.78195953,0.18644){\makebox(0,0)[lt]{\lineheight{1.25}\smash{\begin{tabular}[t]{l}$2$\end{tabular}}}}%
    \put(0,0){\includegraphics[width=\unitlength,page=5]{A2-vertex-dynkin.pdf}}%
    \put(0.72510184,0.12326505){\makebox(0,0)[lt]{\lineheight{1.25}\smash{\begin{tabular}[t]{l}$1$\end{tabular}}}}%
    \put(0.78195953,0.12326505){\makebox(0,0)[lt]{\lineheight{1.25}\smash{\begin{tabular}[t]{l}$2$\end{tabular}}}}%
    \put(0,0){\includegraphics[width=\unitlength,page=6]{A2-vertex-dynkin.pdf}}%
    \put(0.83249962,0.19960536){\makebox(0,0)[lt]{\lineheight{1.25}\smash{\begin{tabular}[t]{l}$\widetilde{D}$\end{tabular}}}}%
    \put(0.83249962,0.13643041){\makebox(0,0)[lt]{\lineheight{1.25}\smash{\begin{tabular}[t]{l}$D$\end{tabular}}}}%
    \put(0.83249962,0.07325553){\makebox(0,0)[lt]{\lineheight{1.25}\smash{\begin{tabular}[t]{l}$C$\end{tabular}}}}%
    \put(0.7587487,0.09220798){\makebox(0,0)[lt]{\lineheight{1.25}\smash{\begin{tabular}[t]{l}$\text{\tiny 3}$\end{tabular}}}}%
    \put(0.72510184,0.06009017){\makebox(0,0)[lt]{\lineheight{1.25}\smash{\begin{tabular}[t]{l}$1$\end{tabular}}}}%
    \put(0.78195953,0.06009017){\makebox(0,0)[lt]{\lineheight{1.25}\smash{\begin{tabular}[t]{l}$2$\end{tabular}}}}%
    \put(0,0){\includegraphics[width=\unitlength,page=7]{A2-vertex-dynkin.pdf}}%
    \put(0.7553078,0.24129091){\makebox(0,0)[lt]{\lineheight{1.25}\smash{\begin{tabular}[t]{l}$0$\end{tabular}}}}%
    \put(0,0){\includegraphics[width=\unitlength,page=8]{A2-vertex-dynkin.pdf}}%
  \end{picture}%
\endgroup%

The situation for the root system $A_2$.
\end{center}
\vspace{5mm}

The idea is to observe that a nontrivial involution $\phi_0$ generating $\Aut(D)$ looks like a planar reflection of the Dynkin diagram and use this to
specify a vector $v_0 \in V$ that plays the role of a root for this involution, as follows. We say that $\Pi_0 \subseteq \Pi$ is a {\it balanced set of roots}
w.r.t.\ $\phi_0$ if it splits as a disjoint union $\Pi_0 = \Pi_+ \cup \Pi_-$ such that $\phi_0(\Pi_+) = \Pi_-$.
Thus, $\Pi_0$ is balanced if and only if it is $\phi_0$-invariant and contains no $\phi_0$-fixed point.
The corresponding balanced minuscule root is the vector
\begin{equation}
v_0 := \sum_{\alpha \in \Pi^+} \alpha - \sum_{\alpha \in \Pi^-} \alpha
 = \sum_{\alpha \in \Pi^+} (\alpha - \phi_0(\alpha)) 
\end{equation}
so that clearly $v_0 \neq 0$ satisfies $\phi_0(v_0) = -v_0$. 
The hyperplanes $\{ v_0 = 0 \}$ replace the fixed points of the involutions (which in general have codimension greater than one) and allows us to slice the polytope $\mathcal{K}$ appropriately to obtain $\mathcal{L}$. 

Note that, in general, the Komrakov--Premet polytope $\K$ is not a simplex
(see the examples above). For simplicity, we would like to slice $\mathcal{K}$ through its vertices, so that the vertices of $\mathcal{L}$ are a subset of the vertices of $\K$. The choice of balanced roots does just that.

\begin{theoA}
\label{thm:B}
Let $\phi_0,\phi_1\in\Aut(D)$ be two involutions such that
$$
\Aut(\A)=\Omega\rtimes\left<\phi_0,\phi_1\right>
$$
and $v_0,v_1\in V$ be two compatible and balanced 
roots
for $\phi_0,\phi_1$, respectively. Then
$$
\mathcal{L}:=\{
x \in \mathcal{K} \mid 
(v_0, x)\geq 0,  
(v_1,x)\geq 0
\}
$$
is a fundamental polytope for $\Aut(\A)$ acting on $\A$, whose vertices are contained in the vertices of $\K$.
Using Bourbaki's ordering of the simple roots \cite{Bo68}, one can take
\[\left\{\begin{array}{ll}v_0=v_1=\sum_{i=1}^{\lfloor n/2\rfloor}(\alpha_i+\alpha_{n+1-i}) & \text{if $\Phi=A_n, n \geq 2$}, \\[.5em] 
v_0 = \alpha_1-\alpha_3, \quad v_1 = \alpha_3-\alpha_4 & \text{if $\Phi=D_4$}, \\[.5em] v_0=v_1=\alpha_{n-1}-\alpha_n & \text{if $\Phi=D_n, n \geq 5$}, \\[.5em] v_0=v_1=\alpha_1-\alpha_6 & \text{if $\Phi=E_6$}, \\[.5em] 
v_0=v_1=0  & \text{otherwise.}\end{array}\right.\]
\end{theoA}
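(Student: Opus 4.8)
The plan is to reduce everything to the single statement that $\mathcal{L}$ is a fundamental polytope for the $\Aut(D)$-action on $\mathcal{K}$; as the excerpt already observes, since $\Aut(D)$ stabilizes $\Pi$ and $\alpha_0$ it leaves $\mathcal{K}$ invariant, and combined with $\Aut(\A)=\Omega\rtimes\langle\phi_0,\phi_1\rangle$ and the fact that $\mathcal{K}$ is a fundamental domain for $\Omega$ on $\A$, this suffices. So the real content is: (a) $\langle\phi_0,\phi_1\rangle$ acting on $\mathcal{K}$ is covered by the half-space cuts $\{(v_0,x)\ge 0\}$ and $\{(v_1,x)\ge 0\}$; (b) distinct group elements send $\mathcal{L}$ to pieces with disjoint interiors; and (c) the vertex claim.

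First I would set up the dictionary between $\langle\phi_0,\phi_1\rangle$ and the group $\Aut(D)$ it generates. In types $A_n$, $D_n$, $E_6$ one has $\Aut(D)\cong\Z/2$ with a single involution $\phi_0=\phi_1$, and the two inequalities coincide; in type $D_4$ one has $\Aut(D)\cong\Sn_3$ which is itself a (rank-two) Coxeter group generated by the two transpositions $\phi_0,\phi_1$; and otherwise $\Aut(D)=1$ and $\mathcal{L}=\mathcal{K}$ trivially. In each nontrivial case I would verify that the chosen $v_i$ is indeed a balanced minuscule root for $\phi_i$ — i.e.\ that the indicated set $\Pi_i\subseteq\Pi$ is $\phi_i$-invariant with no fixed points and splits as $\Pi_+\cup\Pi_-$ swapped by $\phi_i$ — which for the Bourbaki labelling is a direct check: $\phi_0$ on $A_n$ is $i\mapsto n+1-i$, which swaps $\{\alpha_1,\dots,\alpha_{\lfloor n/2\rfloor}\}$ with $\{\alpha_{n+1-i}\}$ and fixes the middle node only when $n$ is odd (but that node is not in $\Pi_0$), so $\phi_0(v_0)=-v_0$; similarly for the remaining types. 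The word ``compatible'' presumably means the $v_i$ are chosen so that the braid/Coxeter relations among $\phi_0,\phi_1$ are reflected geometrically — concretely, in type $D_4$ one needs the two cutting hyperplanes $\{v_0=0\},\{v_1=0\}$ to meet $\mathcal{K}$ at the dihedral angle $\pi/3$ matching $(\phi_0\phi_1)^3=1$, which I would confirm by computing $(v_0,v_1)/(\|v_0\|\|v_1\|)$ from the $D_4$ Cartan data.

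Next, the core geometric step. For a single involution $\phi_0$ with balanced root $v_0$, I would prove $\{(v_0,x)\ge 0\}\cap\mathcal{K}$ is a fundamental domain for $\langle\phi_0\rangle$ on $\mathcal{K}$: because $\phi_0(v_0)=-v_0$ and $\phi_0$ is orthogonal, $\phi_0$ sends the half-space $\{(v_0,x)\ge 0\}$ to $\{(v_0,x)\le 0\}$ (up to the affine translation part of $\phi_0$ acting on $V$, which I must track carefully — this is where the distinction between linear $\phi_0\in\O(V)$ and its realization as an affine isometry of $\A$ via Theorem A(ii)/(iv) enters), so $\mathcal{K}=(\mathcal{K}\cap\{(v_0,x)\ge0\})\cup\phi_0(\mathcal{K}\cap\{(v_0,x)\ge0\})$ with the two halves meeting only on the hyperplane slice. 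For the rank-two case $D_4$ I would then run the standard dihedral-group fundamental-domain argument: the wedge $\{(v_0,x)\ge 0,\ (v_1,x)\ge 0\}$ is a fundamental domain for the group generated by the two reflections in $\{v_0=0\},\{v_1=0\}$, and one checks that restricted to $\mathcal{K}$ this group acts the same way as $\langle\phi_0,\phi_1\rangle$ — i.e.\ $\phi_i$ and the linear reflection $s_{v_i}$ agree on $\mathcal{K}$ (or at least have the same orbits on it). The main obstacle I anticipate is exactly this last point: $\phi_i$ is in general \emph{not} the reflection $s_{v_i}$ on all of $V$, so I cannot simply quote the theory of reflection groups; I would need to argue that on the specific polytope $\mathcal{K}$ their actions coincide, perhaps by showing both fix the slice $\mathcal{K}\cap\{v_i=0\}$ pointwise and swap the two halves of $\mathcal{K}$ in the same way, using the explicit vertex description of $\mathcal{K}$ from \cite{Ga24}.

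Finally, the vertex claim. Since $\mathcal{L}=\mathcal{K}\cap\{(v_0,x)\ge0\}\cap\{(v_1,x)\ge0\}$, a vertex of $\mathcal{L}$ is either a vertex of $\mathcal{K}$ or lies on one of the new hyperplanes $\{v_i=0\}$. I would show the latter cannot produce a \emph{new} vertex: the balanced choice of $v_i$ is engineered so that the hyperplane $\{v_i=0\}$ passes through vertices of $\mathcal{K}$ — concretely, I would check, using the list of vertices of $\mathcal{K}$ (the barycenters/centroids of the $\Omega$-orbit data in the Komrakov--Premet description), that $\{v_i=0\}\cap\mathcal{K}$ is itself spanned by vertices of $\mathcal{K}$ lying on it, so that slicing introduces no new $0$-faces. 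This is the precise sense in which ``the choice of balanced roots does just that'' from the discussion preceding Theorem B, and I would present it as a type-by-type verification (only $A_n$, $D_n$, $D_4$, $E_6$ needing attention) once the explicit vertex coordinates are in hand, closing the proof by assembling (a)+(b)+(c) with the $\Omega\rtimes\Aut(D)$ semidirect-product structure.
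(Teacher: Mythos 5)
Your overall strategy is the paper's: reduce to showing $\mathcal{L}$ is a fundamental polytope for $\Aut(D)$ acting on the $\Aut(D)$-invariant polytope $\K$, and get the tiling/disjointness from $\phi_i(v_i)=-v_i$ via $(\phi_i(x),v_i)=-(x,v_i)$. That part is fine and coincides with Proposition~\ref{prop:u0-slice} (and, for $D_4$, with the dihedral wedge argument implicit in Theorem~\ref{thm:fund_dom_D4}); note also that your worry about tracking a translation part of $\phi_0$ is moot, since $\Aut(D)$ is by construction the linear subgroup of $\Aut(\A)$ fixing the origin.

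The gap is in the vertex claim, which is the technical heart of the theorem. First, for a single slice your criterion (that $\{v_0=0\}\cap\K$ be the convex hull of the vertices of $\K$ lying on it, equivalently that no edge of $\K$ have endpoints strictly on opposite sides of $\{v_0=0\}$) is correct, but you cannot verify it by ``plugging in coordinates'' type by type: in type $A_n$ the polytope $\K$ has $2^n$ vertices, so this is not a finite check, and it requires controlling the edge structure of $\K$. This is precisely what Lemma~\ref{lem:empty} and Theorem~\ref{theo:vertex} do, and the argument genuinely uses that the support of the balanced root lies in the minuscule set $J$ --- Remark~\ref{rem:minuscule_needed_E6} exhibits balanced (non-minuscule) choices of $v_0$ in $E_6$ for which $\vertices(\mathcal{L})\not\subseteq\vertices(\K)$, so your plan must isolate and use this hypothesis rather than only ``$\phi_0(v_0)=-v_0$''. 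Second, in type $D_4$ your criterion applied to each hyperplane separately is insufficient: a vertex of the doubly sliced polytope $\K\cap\{v_0\ge0\}\cap\{v_1\ge0\}$ can appear on the codimension-two set $\{v_0=0\}\cap\{v_1=0\}$ inside a $2$-face of $\K$ even when neither single slice creates new vertices, so ``each $\{v_i=0\}\cap\K$ is spanned by vertices of $\K$'' does not imply the conclusion. The paper needs a separate, explicit case analysis for this (the proof of Theorem~\ref{thm:fund_dom_D4}, which shows $\vertices(\mathcal{L}')\subseteq\vertices(\mathcal{L})$ by examining all possible defining intersections); your plan as written would let this case through unproved.
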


The fact that $\Aut(\widetilde{D})$ is abstract Coxeter can be obtained by inspecting the classification of root systems: the importance of Theorem A is
to realize $\mathrm{Aut}(\widetilde{D})$ geometrically as $\Aut(\A)$. 
This realization is then used in Theorem B. 

The fact that we have a convenient description of the vertices \eqref{eq:kp-domain-vert} of the Komrakov--Premet fundamental domain is the reason we chose it over other possibilities, e.g.\ the Dirichlet fundamental domain for $\Omega$ (see Proposition \ref{prop:dirichilet}).
%

The structure of this article is as follows. In Section 1 we state our main results: Theorems A and B, and introduce the setup and notation.  
In Sections 2, 3, 4 we prove Theorem A and in Section 5 we prove Theorem B. 
In the Appendix we discuss some related negative results on stratified centralizers and equivariant triangulations.

\vspace{6pt}
\noindent Acknowledgements:
We would like to thank C.\ Bonnaf\'e for discussions on the topic.
The first author would like to thank the Department of Mathematics at Friedrich-Alexander University Erlangen--Nuremberg for their warm hospitality during his sabbatical leave there between 2024-2025, and thank for the financial support of CAPES-Brasil-Código de Financiamento 001.

\subsection{Preliminaries}
\label{sec:prelim}

A set of $n+1$ vectors $\{ p_0, p_1, \ldots, p_n\} \subseteq V$ is in general position if any of its proper subsets is l.i.\
A simplex $\sigma$ of $V$ is the convex hull of a set of $n+1$ vertices in general position, $\sigma = \conv( \{ p_i \} )$. There exists a unique point $q_*$ equidistant to the vertices of $\sigma$, it is the center of the sphere 
in which $\sigma$ in inscribed.
Also, there exists a unique point $p_*$ equidistant to the faces of 
$\sigma$, it is the center of the sphere inscribed in $\sigma$, which is tangent to each face of $\sigma$ at a unique point.
Each vertex $p_i$ of $\sigma$ has an opposite face $\sigma(p_i) = \conv( \{ p_j \mid j \neq i \} )$ with unit inward normal $\nu_i$. Furthermore, the unit normals $\nu_0, \nu_1, \ldots, \nu_n$ are in general position.

In the Euclidean group of rigid motions $\E(V) = V \rtimes \O(V)$, the vector space $V$ is identified with the subgroup of translations via $t_\gamma(H) = H + \gamma$ for $\gamma \in V$. Since $V$ is normal in $\E(V)$, the projection to the linear part 
\begin{equation}
\pi: \E(V) \to \O(V) 
\end{equation}
is a surjective homomorphism, equivariant w.r.t.\ the $\O(V)$-action by conjugation, 
that is
\begin{equation}
\label{eq:equivariance}
\pi( \phi \zeta \phi^{-1} ) = \phi \pi(\zeta) \phi^{-1}
\end{equation}
for $\zeta \in \E(V)$ and $\phi \in \O(V)$. 
 
An affine reflection group $W_{\mathrm{aff}}$ of $V$ is generated by the reflections on the hyperplanes of a locally finite and infinite collection $\mathcal{H}$ of affine hyperplanes of $V$ that is invariant w.r.t.\ reflection on each one of its hyperplanes (see Chap.~V of \cite{Bo68}). An alcove is then the closure of a connected component of the Stiefel diagram $V \setminus \bigcup_{H \in \mathcal{H}} H$. 

The group $W_{\mathrm{aff}}$ acts simply transitively on the set of alcoves
and any alcove is a fundamental domain for the action of $W_{\mathrm{aff}}$ in $V$. We assume from now on that this action is irreducible so that alcoves are simplices. 
Fix an alcove $\A$ through the origin, which we will call a fundamental alcove. 
Its inwards unit normals give rise to a Coxeter diagram $\widetilde{C}$.
We actually have more structure. Indeed, successive reflections on two successive paralell hyperplanes of $\mathcal{H}$, one of which meets $\A$, give rise to translations by a reduced and irreducible coroot system $\Phi^\vee \subseteq W_{\mathrm{aff}}$, with corresponding root system $\Phi$ and system of simple roots $\Pi$ that encode $\A$ as follows.  Fix simple roots $\Pi=\{\alpha_1,\dotsc,\alpha_n\}$ indexed by
$I:=\{1,\dotsc,n\}$ and the corresponding highest root
$\alpha_0=\sum_{i\in I} m_i\alpha_i$. We have that
\begin{equation}
\A:=\{x \in V \mid (\forall i\in I)\ (\alpha_i, x) \geq 0,~ (\alpha_0, x) \leq 1\}. 
\end{equation}
It follows that $\{-\alpha_0, \alpha_i \mid i \in I  \}$ are (nonunit) inward  normals to $\A$, giving rise to a Dynkin diagram $\widetilde{D}$, which projects onto $\widetilde{C}$. 

Clearly, $W_{\mathrm{aff}}$ contains the Weyl group $W$ of $\Phi$ and the translations by coroots $\alpha^\vee \in \Phi^\vee$. We actually have
\begin{equation}
W_{\mathrm{aff}} = Q^\vee \rtimes W, 
\end{equation}
where the coroot lattice $Q^\vee=\Z\Phi^\vee$ is $W$-invariant.
Consider the {\it fundamental coweights}
\[ \varpi^\vee_1,\dotsc,\varpi^\vee_n \in V \quad \mbox{ given by } \quad
  (\varpi^\vee_i,\alpha_j) = \delta_{ij},\]
spanning the {\it coweight lattice} 
\[ P^\vee = \{ \omega \in V \ |\  (\forall \alpha \in \Phi)\
    (\omega, \alpha) \in \Z\}.\]
In view of the integrality of the
Cartan--Killing integers $(\alpha^\vee, \beta)$, $\alpha, \beta \in \Phi$,
the coroot lattice $Q^\vee$ is a subgroup of $P^\vee$.
This gives rise to the {\it extended affine Weyl group} 
\begin{equation}
\label{eq:def-ext}
W_{\mathrm{ext}} := P^\vee \rtimes W
\end{equation}
Translation by $P^\vee$ leaves the Stiefel diagram of $\Phi$
invariant, so that $W_{\rm ext}$  also acts on the
set of alcoves of $\Phi$,
but not necessarily simply transitively: the stabilizer
\begin{equation} \label{eq:omega}
\Omega := \{ \zeta \in W_{\mathrm{ext}}  \mid \zeta(\A) = \A \}
\end{equation}
is called the {\it fundamental group} of $\Phi$. Since the action of $W_{\mathrm{aff}}$ on the alcoves is simply transitive, we get the decomposition
\begin{equation}
\label{eq:decompos-ext-omega}
W_{\mathrm{ext}} = W_{\mathrm{aff}} \rtimes \Omega
\end{equation}
From this discussion we get isomorphisms 
\begin{equation}
\label{eq:decompos-ext-omega-b}
P^\vee/Q^\vee
\leftarrow
W_{\mathrm{ext}}/W_{\mathrm{aff}} 
\rightarrow
\Omega.
\end{equation}
In particular, $\Omega$ is a finite abelian group
(with group structure given by Corollary \ref{corol4:euler} and Table \ref{table:omega}).

Now consider the automorphism group $\Aut(\Phi)$ of the root system $\Phi$ 
(see \cite{So22}).  
Since $\Phi$ is irreducible we have
\begin{equation}
\Aut(\Phi) = \{ \phi \in \O(V) \mid \phi(\Phi) = \Phi \} \leq \O(V)
\end{equation}
The closed Weyl chamber w.r.t.\ to the system of simple roots $\Pi$ is the cone
\begin{equation}
\mathcal{C}:=\{x \in V \mid \forall i\in I,
(\alpha_i, x) \geq 0 \}
\end{equation}
with faces $\{ \alpha_i = 0 \}$, $i \in I$. Thus, the inward normal direction to its faces are given by $\Pi$, which
correspond to the nodes of the Dynkin diagram $D$ of $\Phi$. The automorphism group $\Aut(D)$ of \cancel{the} $D$ is the subgroup of $\Aut(\Phi)$ that stabilizes $\Pi \subseteq \Phi$.
As $W$ is generated by reflections on all root hyperplanes, 
it is a normal subgroup of $\Aut(\Phi)$. Since $W$ acts simply transitively on the systems of simple roots of $\Phi$ we get the decomposition 
\begin{equation}
\label{eq:decompos-aut-1}
\Aut(\Phi) = W \rtimes \Aut(D)
\end{equation}

Recall that the inward normal directions of the fundamental alcove $\A$ are given by $-\alpha_0$ and the simple roots $\Pi$, which are the nodes of the corresponding affine Dynkin diagram $\widetilde{D}$ of $\Phi$. 
The automorphism group $\Aut(\widetilde{D})$ of $\widetilde{D}$ is the subgroup of $\Aut(\Phi)$ that stabilizes $\{-\alpha_0\} \cup \Pi \subseteq \Phi$.
Since $\Aut(D)$ stabilizes $\Pi$, it fixes the corresponding highst root $\alpha_0$ so that $\Aut(D) \leq \Aut(\widetilde{D})$. Intersecting \eqref{eq:decompos-aut-1} with $\Aut(\widetilde{D})$ we get the decomposition
\begin{equation}
\label{eq:decompos-aut-2}
\Aut(\widetilde{D}) = \widetilde{W} \rtimes \Aut(D), 
\end{equation}
where
\begin{equation}
\widetilde{W} := W \cap \Aut(\widetilde{D}). 
\end{equation}

\section{Affine diagram automorphisms and minuscule roots}
\label{minuscule-roots}

In this section we describe how the affine diagram automorphisms and the minuscule roots are intimately related. 

Recall the highest root $\alpha_0 = \sum_{i>0} m_i \alpha_i$. We say that the root $\alpha_j$, or the coweight $\varpi^\vee_j$, is {\it minuscule} when the corresponding coefficient is $m_j=1$. Recall that \eqref{eq:decompos-aut-2} 
\[ \widetilde{W} := W \cap \Aut(\widetilde{D}) \]
acts simply transitively on the set of systems of simple roots
contained in $\{-\alpha_0\} \cup \Pi$.  Since any such system has $n$ elements, it is either $\Pi$ or of the form
\begin{equation}
\label{eq:simple-system-i}
\Pi_j = \{ -\alpha_0,\, \alpha_i:\, i \neq j \}
\end{equation}
The main result of this section is the following.

\begin{theo}
\label{thm:simple-system-i}
$\Pi_j$ is a system of simple roots for $\Phi$ if and only if the root $\alpha_j$ is minuscule. In this case, $\alpha_j$ is the lowest root of $\Pi_j$.
\end{theo}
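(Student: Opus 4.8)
The plan is to characterize when the set $\Pi_j = \{-\alpha_0,\ \alpha_i : i\neq j\}$ is a system of simple roots for $\Phi$ by expanding an arbitrary root in the basis $\Pi$ and examining the sign constraints forced by the candidate basis $\Pi_j$. Recall that a subset $B\subseteq\Phi$ of $n$ linearly independent vectors is a system of simple roots precisely when every root of $\Phi$ is a nonnegative or nonpositive integral combination of $B$. Since $\Pi_j$ differs from $\Pi$ only in that $\alpha_j$ has been replaced by $-\alpha_0$, linear independence of $\Pi_j$ is automatic (equivalently, $m_j\neq 0$, which always holds), so the real content is the sign condition.

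First I would express the change of basis explicitly. Writing $\alpha_0 = \sum_{i} m_i\alpha_i$ with $m_j$ the coefficient on $\alpha_j$, we can solve for $\alpha_j$ in terms of $\Pi_j$:
\[
\alpha_j = \frac{1}{m_j}\Big( -(-\alpha_0) + \sum_{i\neq j} m_i\alpha_i \Big) = \frac{1}{m_j}\Big( (-\alpha_0)\cdot(-1) + \sum_{i\neq j} m_i\,\alpha_i\Big).
\]
Hmm, let me be careful with signs: from $-\alpha_0 = -\sum_i m_i\alpha_i$ we get $m_j\alpha_j = -(-\alpha_0) - \sum_{i\neq j} m_i\alpha_i$, wait — that gives $m_j \alpha_j = \alpha_0 - \sum_{i\ne j}m_i\alpha_i$, i.e. $\alpha_j = \tfrac{1}{m_j}\big(\alpha_0 - \sum_{i\neq j}m_i\alpha_i\big)$, and since $\alpha_0 = -(-\alpha_0)$, in terms of the basis $\Pi_j = \{\beta_0 := -\alpha_0\}\cup\{\alpha_i : i\neq j\}$ this reads $\alpha_j = \tfrac{1}{m_j}\big(-\beta_0 - \sum_{i\neq j}m_i\alpha_i\big)$. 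So in the $\Pi_j$-coordinates, $\alpha_j$ has all coefficients $\le 0$ (strictly negative on $\beta_0$, and $-m_i/m_j\le 0$ on the others), while each $\alpha_i$ ($i\neq j$) is a single positive basis vector. Consequently an arbitrary positive root $\beta = \sum_i c_i\alpha_i$ ($c_i\ge 0$) rewrites, in the $\Pi_j$-basis, with $\beta_0$-coefficient equal to $-c_j/m_j$ and $\alpha_i$-coefficient ($i\neq j$) equal to $c_i - c_j m_i/m_j$.

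Next I would analyze the sign pattern. For $\Pi_j$ to be a simple system, every root must have coefficients all of one sign. Consider first the "only if" direction: if $m_j \ge 2$, I would produce a root $\beta$ that is not sign-coherent in the $\Pi_j$-basis. A natural candidate is $\alpha_0$ itself, whose $\Pi_j$-expansion is $\beta_0$-coefficient $-1$ (wait, $\alpha_0 = -\beta_0$, so that's just $-\beta_0$, which is fine)... so instead I would look at roots $\beta$ with $0 < c_j < m_j$: for such a root the $\beta_0$-coefficient $-c_j/m_j$ is strictly negative and non-integral unless $m_j \mid c_j$, which already shows failure of the *integral*-combination requirement when $m_j\ge 2$ and a root with $c_j=1$ exists — and since $\alpha_j$ itself has $c_j = 1$, we are done: $\alpha_j$ has $\beta_0$-coefficient $-1/m_j\notin\Z$. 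Hence $m_j=1$ is necessary. For the "if" direction, assume $m_j = 1$. Then the $\Pi_j$-expansion of a positive root $\beta=\sum c_i\alpha_i$ has $\beta_0$-coefficient $-c_j$ and $\alpha_i$-coefficient $c_i - c_j m_i$. Since $\beta \le \alpha_0$ in the root order (the highest root dominates all roots — this is where irreducibility is used), we have $c_i \le m_i$ for all $i$; in particular $c_i - c_j m_i \le c_i \le m_i$ when $c_j\ge 1$... I need the right sign: if $c_j = 0$ then $\beta$ is a nonnegative combination of $\{\alpha_i : i\neq j\}\subseteq\Pi_j$, fine; if $c_j \ge 1$, then $-c_j < 0$ and I must show $c_i - c_j m_i \le 0$ for all $i\neq j$, i.e. $c_i \le c_j m_i$. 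When $c_j\ge 1$ this follows from $c_i\le m_i \le c_j m_i$. So every positive root is either $\ge 0$ or $\le 0$ in $\Pi_j$-coordinates, and negative roots are handled by negation; moreover all coefficients are integers since $m_j=1$. Thus $\Pi_j$ is a simple system. Finally, $\alpha_j = -\beta_0 - \sum_{i\neq j}m_i\alpha_i$ has all-nonpositive $\Pi_j$-coordinates, and any root $\beta$ with $c_j\ge 1$ is $\ge \alpha_j$ coordinatewise in the $\Pi_j$-basis (compare $-c_j \ge -1$ and $c_i - c_j m_i \ge c_i \ge -m_i$... one checks $c_i - c_j m_i \ge -m_i$ iff $c_i \ge (c_j-1)m_i$, which is false in general) — so instead I would argue directly: the lowest root of $\Pi_j$ is the unique root that is $\le$ every simple root of $\Pi_j$; since $-\alpha_0$ is the lowest root of the simple system $\Pi$... actually the cleanest route is that under the unique element $w\in\widetilde W$ with $w(\Pi) = \Pi_j$ (provided by the cited simple transitivity of $\widetilde W$), the highest root $\alpha_0$ of $\Pi$ maps to the highest root of $\Pi_j$, hence $-\alpha_0$ is... no — one uses instead $-w_0$-type longest element reasoning, or simply that $w$ sends the lowest root $-\alpha_0$ of $\Pi$ to the lowest root of $\Pi_j$ and separately identifies $w(-\alpha_0)$. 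I would compute: since $w(\alpha_i)\in\Pi_j$ for all $i$ and $w$ permutes them, $w(\alpha_0) = \sum m_i w(\alpha_i)$ is the highest root of $\Pi_j$; then the lowest root of $\Pi_j$ is $-w(\alpha_0) = w(-\alpha_0) = \alpha_j$ exactly when $w$ swaps the roles appropriately — more directly, $\{-\alpha_0\}\cup\Pi$ is $\widetilde W$-stable and $w$ must send the "extra node" $-\alpha_0$ of $\Pi$ (with respect to $\Pi_j$, the extra node being $\alpha_j$) to $\alpha_j$, giving $\alpha_j$ = lowest root of $\Pi_j$ once one checks $\alpha_j$ is indeed $\le$-minimal, which follows from its all-nonpositive $\Pi_j$-coordinates combined with the fact that it is a root.

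The main obstacle I anticipate is the clean bookkeeping of signs through the change of basis — in particular pinning down the "if" direction's inequality $c_i \le c_j m_i$ for all $i$ when $c_j\ge 1$, which rests on the dominance $\beta\preceq\alpha_0$ of the highest root (hence on irreducibility), and then identifying $\alpha_j$ as the lowest root of $\Pi_j$ cleanly rather than through a messy coordinate comparison. I expect the identification of the lowest root is most slickly done via the $\widetilde W$-action already established in the paragraph preceding the theorem: the unique $w\in\widetilde W$ with $w\Pi = \Pi_j$ carries $-\alpha_0$ (the node by which $\Pi$ is "extended" inside $\{-\alpha_0\}\cup\Pi$) to $\alpha_j$ (the node by which $\Pi_j$ is "extended"), and $w$ carries lowest root to lowest root; since $-\alpha_0$ is the lowest root of $\Pi$, its image $\alpha_j$ is the lowest root of $\Pi_j$.
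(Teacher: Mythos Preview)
Your argument for the equivalence is correct and follows essentially the same change-of-basis idea as the paper: express roots in the $\Pi_j$-coordinates and use that the highest root dominates every positive root (so $c_i\le m_i$). Your ``only if'' is in fact slightly slicker than the paper's---observing that $\alpha_j$ has $\beta_0$-coefficient $-1/m_j$ immediately forces $m_j=1$.

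The gap is in the ``lowest root'' clause. After several aborted attempts you settle on invoking the element $w\in\widetilde W$ with $w\Pi=\Pi_j$, using the simple transitivity of $\widetilde W$ stated just before the theorem. But look at how the paper actually establishes that such a $w$ lies in $\widetilde W$ (Corollary~\ref{corol1:euler}): it uses that $w$ carries the lowest root $-\alpha_0$ of $\Pi$ to the lowest root of $\Pi_j$, \emph{which by Theorem~\ref{thm:simple-system-i} is $\alpha_j$}. So the very fact you are citing depends on what you are trying to prove. The transitivity of $W$ alone only gives you $w\in W$ with $w\Pi=\Pi_j$; showing $w(-\alpha_0)\in\{-\alpha_0\}\cup\Pi$ (hence $w\in\widetilde W$) requires identifying the lowest root of $\Pi_j$, closing the circle.

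The paper's fix is a two-line direct check you nearly stumbled onto: since $\alpha_j$ has all-nonpositive $\Pi_j$-coordinates it is $\Pi_j$-negative, and for each $\beta\in\Pi_j$ the difference $\alpha_j-\beta$ is not a root (for $\beta=\alpha_i$ with $i\ne j$ because $\Pi$ is simple; for $\beta=-\alpha_0$ because $\alpha_j+\alpha_0$ would exceed the highest root). Hence $\alpha_j$ cannot be lowered further and is the lowest root of $\Pi_j$.
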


\begin{proof}
$\Pi_j$ is clearly a linear independent set of vectors that spans $V$.
If $m_j = 1$, we claim that every root $\beta \in \Phi$ is $\Z$-spanned by $\Pi_j$ with coefficients of the same sign. In fact, w.l.o.g.\ $\beta$ is negative w.r.t.\ $\Pi$, thus it can be obtained bottom up from the lowest root $-\alpha_0$ w.r.t.\ $\Pi$ as
$$
\beta = -\alpha_0 + \sum_i c_i \alpha_i = 
-\alpha_0 + c_j\alpha_j + \sum_{i\neq j} c_i \alpha_i
=  (c_j -1)\alpha_j + \sum_{i\neq j} (c_i -m_i) \alpha_i
$$
for integers $c_i \geq 0$, where $c_i - m_i \leq 0$. On the rightmost side, the $\alpha_j$ coefficient of $\beta$ in the $\Pi$-basis is $c_j-1 \leq 0$, it follows that $c_j \in \{0,\, 1\}$.
If $c_j = 0$, then
$\beta = -\alpha_0 + \sum_i c_i \alpha_i$
has non-negative coefficients on the $\Pi_j$ basis.
If $c_j = 1$, then
$\beta = \sum_{i\neq j} (c_i - m_i)\alpha_i$
has non-positive coefficients on the $\Pi_j$ basis.
In both cases the claim hold, thus $\Pi_j$ is a system of simple roots.

Reciprocally, if $\Pi_j$ is a system of simple roots then there exists unique integer coefficients $c_i$ of the same sign such that
\begin{eqnarray*}
\alpha_j = 
c_0(-\alpha_0) + \sum_{i \neq j} c_i \alpha_i
\end{eqnarray*}
Since in the rightmost side the coefficient of $\alpha_j$ in the $\Pi$-basis is $-c_0m_j$, it follows that $-c_0m_j=1$ so that 
$-c_0 = m_j = 1$, thus $\alpha_j$ is minuscule.

If $\Pi_j$ is a system of simple roots we claim $\alpha_j$ is its lowest root. In fact, 
subtracting $\alpha_i$, $i \neq j$, from $\alpha_j$ amounts to $\alpha_j - \alpha_i$, which is never a root, since $\Pi$ is a simple system. Furthermore, subtracting $-\alpha_0$ from $\alpha_j$ amounts to $\alpha_j + \alpha_0$, which is never a root, since $\alpha_0$ is the highest root of the simple system $\Pi$. 
\end{proof}


\begin{cor}
\label{corol1:euler}
Let $\alpha_j$ be a minuscule root. Then there exists a unique
$v_j \in W$ such that \begin{equation}
\label{eq:def-v-omega}
v_j \Pi = \Pi_j
\end{equation}
or, equivalently, such that $v_j \in \widetilde{W}$ and 
$v_j( -\alpha_0 ) = \alpha_j$.
\end{cor}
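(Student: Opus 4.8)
The plan is to combine Theorem~\ref{thm:simple-system-i} with the two simple-transitivity facts already recorded above: that $W$ acts simply transitively on the set of systems of simple roots of $\Phi$, and that $\widetilde{W} = W \cap \Aut(\widetilde{D})$ acts simply transitively on the set of systems of simple roots contained in $\{-\alpha_0\}\cup\Pi$. First I would observe that, since $\alpha_j$ is minuscule, Theorem~\ref{thm:simple-system-i} guarantees that $\Pi_j$ is a system of simple roots for $\Phi$ and that $\alpha_j$ is its lowest root (equivalently, $-\alpha_j$ is its highest root). Simple transitivity of $W$ on systems of simple roots then yields a unique $v_j \in W$ with $v_j\Pi = \Pi_j$, which is the first assertion.

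It remains to prove the stated equivalence. For the forward direction, assume $v_j\Pi = \Pi_j$. Since both $\Pi$ and $\Pi_j$ are systems of simple roots contained in $\{-\alpha_0\}\cup\Pi$, simple transitivity of $\widetilde{W}$ on such systems produces an element of $\widetilde{W}$ carrying $\Pi$ to $\Pi_j$; as $\widetilde{W}\le W$ and $W$ acts simply transitively, this element must be $v_j$, so $v_j\in\widetilde{W}$. Moreover $v_j$ carries the positive system spanned by $\Pi$ onto the positive system spanned by $\Pi_j$, hence the highest root of $\Pi$ to the highest root of $\Pi_j$, i.e.\ $v_j(\alpha_0)=-\alpha_j$, equivalently $v_j(-\alpha_0)=\alpha_j$. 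Conversely, if $v_j\in\widetilde{W}$ then $v_j$ permutes the $n+1$ vectors $\{-\alpha_0\}\cup\Pi$, so $v_j\Pi$ is this set with the single vector $v_j(-\alpha_0)$ deleted; the hypothesis $v_j(-\alpha_0)=\alpha_j$ thus gives $v_j\Pi = (\{-\alpha_0\}\cup\Pi)\setminus\{\alpha_j\}=\Pi_j$. Uniqueness in this second formulation is then automatic, since any such $v_j$ satisfies $v_j\Pi=\Pi_j$ and hence coincides with the unique element found above.

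I do not expect a real obstacle here: the substantive content is Theorem~\ref{thm:simple-system-i}, and the rest is a matter of organizing the simple-transitivity statements. The only point needing a little care is the translation between ``system of simple roots'' and ``lowest/highest root'', namely checking that an element of $W$ sending $\Pi$ to $\Pi_j$ automatically sends $-\alpha_0$ to $\alpha_j$; this is immediate once one recalls that the highest (hence lowest) root is intrinsically attached to a system of simple roots.
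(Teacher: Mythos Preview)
Your proposal is correct and follows essentially the same approach as the paper: invoke Theorem~\ref{thm:simple-system-i} to know $\Pi_j$ is a simple system with lowest root $\alpha_j$, use simple transitivity of $W$ for existence and uniqueness of $v_j$, and then argue the equivalence by tracking what happens to the set $\{-\alpha_0\}\cup\Pi$. The only cosmetic difference is that, to show $v_j\in\widetilde{W}$, the paper deduces it directly from $v_j(-\alpha_0)=\alpha_j$ together with $v_j\Pi=\Pi_j$ (so $v_j$ stabilizes $\{-\alpha_0\}\cup\Pi$), whereas you invoke the simple transitivity of $\widetilde{W}$ separately; both are immediate.
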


\begin{proof}
Since $\alpha_j$ is minuscule, $\Pi_j$ is a system of simple roots, by Theorem \ref{thm:simple-system-i}. The uniqueness of $v_j$ then comes from the simple transitivity of $W$ on the set of systems of simple roots of $\Phi$.
Now let $v_j \in W$ such that $v_j \Pi = \Pi_j$. Then $v_j$ maps the lowest root of $\Pi$ to the lowest root of $\Pi_j$ so that, by Theorem \ref{thm:simple-system-i}, $v_j( -\alpha_0 ) = \alpha_j$, thus $v_j \in \widetilde{W}$.
Reciprocally, if $v_j \in  \Aut(\widetilde{D})$ then
$$
v_j( \Pi \cup \{-\alpha_0\} ) = \Pi \cup \{-\alpha_0\}
$$
If $v_j( -\alpha_0 ) = \alpha_j$ then we can remove 
$\{-\alpha_0\}$ from the leftmost parenthesis and $\{\alpha_j\}$ from the rightmost parenthesis to get $v_j \Pi = \Pi_j$, as claimed.
\end{proof}

The next result gives a topological characterization of the minuscule roots in terms of the affine Dynkin diagram.

\begin{cor}
\label{corol2:euler}
For a simple root $\alpha_j$, the following are equivalent:
\begin{itemize}
\item[\rm(a)] $\alpha_j$ is minuscule. 
\item[\rm(b)] It can be mapped to $-\alpha_0$ by an automorphism of the affine diagram $\widetilde{D}$. 
\item[\rm(c)] The diagram $\widetilde{D} \setminus \{ \alpha_j\}$ is
isomorphic to~$D$.
\end{itemize}
\end{cor}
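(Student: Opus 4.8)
The plan is to prove the three-way equivalence by establishing $(a)\Rightarrow(b)$, $(a)\Rightarrow(c)$, $(b)\Rightarrow(a)$ and $(c)\Rightarrow(a)$, with Theorem~\ref{thm:simple-system-i} and Corollary~\ref{corol1:euler} doing most of the work. Throughout I will use the basic observation that an element of $\Aut(\widetilde{D})$, being an element of $\Aut(\Phi)$ that stabilizes the node set $\{-\alpha_0\}\cup\Pi$ of $\widetilde{D}$, permutes these $n+1$ roots while preserving all Cartan integers, hence acts as a genuine automorphism of the diagram $\widetilde{D}$; in particular, as an element of $\Aut(\Phi)$ it carries each base of $\Phi$ to a base.

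For $(a)\Rightarrow(b)$ and $(a)\Rightarrow(c)$: if $\alpha_j$ is minuscule, Corollary~\ref{corol1:euler} provides $v_j\in\widetilde{W}\subseteq\Aut(\widetilde{D})$ with $v_j\Pi=\Pi_j$ and $v_j(-\alpha_0)=\alpha_j$. Then $v_j^{-1}\in\Aut(\widetilde{D})$ sends $\alpha_j$ to $-\alpha_0$, which is $(b)$; and the bijection $v_j\colon\Pi\to\Pi_j$ preserves Cartan integers, so it is an isomorphism from the subdiagram $\widetilde{D}\setminus\{-\alpha_0\}$, which is $D$, onto the subdiagram $\widetilde{D}\setminus\{\alpha_j\}$, which is $(c)$.

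For $(b)\Rightarrow(a)$: given $\phi\in\Aut(\widetilde{D})$ with $\phi(\alpha_j)=-\alpha_0$, put $\psi:=\phi^{-1}\in\Aut(\widetilde{D})$, so $\psi(-\alpha_0)=\alpha_j$. Since $\psi$ stabilizes $\{-\alpha_0\}\cup\Pi$ and sends $-\alpha_0\mapsto\alpha_j$, it sends $\Pi=\bigl(\{-\alpha_0\}\cup\Pi\bigr)\setminus\{-\alpha_0\}$ onto $\bigl(\{-\alpha_0\}\cup\Pi\bigr)\setminus\{\alpha_j\}=\Pi_j$; as $\psi\in\Aut(\Phi)$ carries the base $\Pi$ to a base, $\Pi_j$ is a system of simple roots for $\Phi$, and Theorem~\ref{thm:simple-system-i} gives that $\alpha_j$ is minuscule.

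The genuinely new step --- and the one I expect to be the main obstacle --- is $(c)\Rightarrow(a)$, since here one starts only from an abstract diagram isomorphism, with no automorphism of $\Phi$ in hand. The idea is to observe that $\Pi_j=\{-\alpha_0\}\cup\{\alpha_i:i\neq j\}$ is a linearly independent family of $n$ roots with pairwise non-positive Cartan integers --- for distinct $\alpha_i,\alpha_k$ this holds because $\Pi$ is simple, and $(-\alpha_0,\alpha_i)\le0$ is the standard fact that the highest root has non-negative inner product with every simple root --- so $\Pi_j$ is the base of a root subsystem $\Phi_j\subseteq\Phi$, whose Dynkin diagram, read off from the Cartan integers among the elements of $\Pi_j$, is precisely $\widetilde{D}\setminus\{\alpha_j\}$. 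Hypothesis $(c)$ then says this diagram equals $D$, the Dynkin diagram of $\Phi$; since a Dynkin diagram determines its root system up to isomorphism, $|\Phi_j|=|\Phi|$, and $\Phi_j\subseteq\Phi$ forces $\Phi_j=\Phi$. Hence $\Pi_j$ is a base of $\Phi$ itself, and Theorem~\ref{thm:simple-system-i} applies to conclude that $\alpha_j$ is minuscule. The one point deserving care is exactly this passage from ``base of the subsystem $\Phi_j$'' to ``base of $\Phi$'', which rests on the cardinality argument and is genuinely needed: for the short simple root of $B_2$ one checks that $\Pi_j$ becomes orthogonal, generating a proper subsystem of type $A_1\times A_1$, so that $\widetilde{D}\setminus\{\alpha_j\}\not\cong D$ and $\alpha_j$ is not minuscule, consistently.
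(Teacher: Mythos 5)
Your proposal is correct and follows essentially the paper's route: Corollary~\ref{corol1:euler} gives $v_j^{-1}\in\Aut(\widetilde{D})$ sending $\alpha_j$ to $-\alpha_0$ and restricting to a diagram isomorphism $\widetilde{D}\setminus\{\alpha_j\}\to D$, and Theorem~\ref{thm:simple-system-i} converts ``$\Pi_j$ is a base'' back into ``$\alpha_j$ is minuscule.'' The one place you genuinely add value is $(c)\Rightarrow(a)$: the paper simply asserts that a diagram isomorphism $\widetilde{D}\setminus\{\alpha_j\}\to D$ makes $\Pi_j$ a system of simple roots for $\Phi$, whereas you justify this by noting that $\Pi_j$ is a linearly independent set of roots with pairwise non-positive Cartan integers, hence a base of a root subsystem $\Phi_j\subseteq\Phi$ whose diagram is $\widetilde{D}\setminus\{\alpha_j\}$, and then forcing $\Phi_j=\Phi$ by the cardinality comparison $|\Phi_j|=|\Phi|$; your direct $(b)\Rightarrow(a)$ via $\psi(\Pi)=\Pi_j$ is likewise a clean substitute for the paper's unargued ``(b) is equivalent to (c).''
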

\begin{proof}
  If $\alpha_j$ is minuscule, then Corollary \ref{corol1:euler}
  implies that $f = v_j^{-1} \in \mathrm{Aut}(\widetilde{D})$ is such that $f(\alpha_j) = -\alpha_0$. It restricts to a diagram isomorphism $f: \widetilde{D} \setminus \{\alpha_j\} \to \widetilde{D} \setminus \{-\alpha_0\} = D$, so (a) implies (b), and (b) is equivalent to (c). 
  To see that (c) implies (a), assume that
  there exists a diagram isomorphism $f: \widetilde{D} \setminus \{\alpha_j\} \to D$. Then $\widetilde{D} \setminus \{\alpha_j\}$ is a
  system of simple roots for $\Phi$.  Since the roots of this diagram are $\Pi_j$, 
  Theorem \ref{thm:simple-system-i} implies that $\alpha_j$ is minuscule.
\end{proof}

\begin{cor}
\label{corol3:euler}
We have that
\begin{equation}
\widetilde{W} = \{ 1\} \cup \{ v_j:\, \alpha_j \text{ is minuscule}\, \}
\end{equation}
which acts simply transitively on the subset of roots $\{ -\alpha_0,\, \alpha_j:\, \alpha_j \text{ is minuscule} \}$.
\end{cor}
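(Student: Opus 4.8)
The plan is to deduce everything from the simple transitivity of $\widetilde{W}$ on the set of simple systems contained in $\{-\alpha_0\}\cup\Pi$ — recorded just before Theorem~\ref{thm:simple-system-i} — combined with the dictionary "simple system $\leftrightarrow$ its lowest root" supplied by Theorem~\ref{thm:simple-system-i} and the existence/uniqueness in Corollary~\ref{corol1:euler}. So the first step is to make the relevant set of simple systems explicit: by the discussion around \eqref{eq:simple-system-i}, a system of simple roots inside $\{-\alpha_0\}\cup\Pi$ is either $\Pi$ itself or some $\Pi_j$, and by Theorem~\ref{thm:simple-system-i} the latter occurs precisely when $\alpha_j$ is minuscule. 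Hence this set is $\{\Pi\}\cup\{\Pi_j:\alpha_j\text{ minuscule}\}$.

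Next I would identify the elements of $\widetilde{W}$ realizing these systems. Since $\widetilde{W}$ acts simply transitively on the set above, for each such simple system $\Sigma$ there is a unique $w\in\widetilde{W}$ with $w\Pi=\Sigma$; taking $\Sigma=\Pi$ forces $w=1$, and taking $\Sigma=\Pi_j$ forces $w=v_j$ by Corollary~\ref{corol1:euler}. This gives $\widetilde{W}=\{1\}\cup\{v_j:\alpha_j\text{ minuscule}\}$, and these group elements are pairwise distinct because they send $\Pi$ to pairwise distinct simple systems; equivalently, the roots $v_j(-\alpha_0)=\alpha_j$ are distinct positive roots, all different from $-\alpha_0$.

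Finally I would transport the simply transitive action from simple systems to lowest roots. Any $w\in\widetilde{W}\subseteq\Aut(\Phi)$ maps the positive system determined by $\Pi$ onto that determined by $w\Pi$, hence $w(-\alpha_0)=-w(\alpha_0)$ is the negative of the highest root of $w\Pi$, i.e. the lowest root of $w\Pi$. By Theorem~\ref{thm:simple-system-i} the lowest root of $\Pi_j$ is $\alpha_j$, while the lowest root of $\Pi$ is $-\alpha_0$; therefore $\Sigma\mapsto(\text{lowest root of }\Sigma)$ is a bijection from $\{\Pi\}\cup\{\Pi_j:\alpha_j\text{ minuscule}\}$ onto $\{-\alpha_0\}\cup\{\alpha_j:\alpha_j\text{ minuscule}\}$. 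Composing with the simply transitive map $w\mapsto w\Pi$, the orbit map $w\mapsto w(-\alpha_0)$ becomes a bijection of $\widetilde{W}$ onto $\{-\alpha_0\}\cup\{\alpha_j:\alpha_j\text{ minuscule}\}$; a transitive action of a finite group whose orbit map at one point is bijective is simply transitive, which is the assertion.

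I do not expect a genuine obstacle here: the content is entirely packaged in Theorem~\ref{thm:simple-system-i} and Corollary~\ref{corol1:euler}. The only point that deserves a sentence of care is the equivariance used in the last step — that an element of $\widetilde{W}$ intertwines its action on simple systems with its action on the corresponding lowest roots — after which simple transitivity follows purely formally by transport of structure.
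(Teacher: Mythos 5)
Your proof is correct and follows essentially the same route as the paper: both arguments reduce everything to Theorem~\ref{thm:simple-system-i} and Corollary~\ref{corol1:euler}, identifying the nontrivial elements of $\widetilde{W}$ with the $v_j$ via the simple systems $\Pi_j$ and reading off simple transitivity from the correspondence between simple systems and their lowest roots. The only cosmetic difference is that the paper checks triviality of the stabilizer of $-\alpha_0$ directly via $\Aut(D)\cap W=1$, while you obtain it from bijectivity of the orbit map after transporting the simply transitive action on simple systems; these are equivalent.
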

\begin{proof}
Corollary \ref{corol1:euler} implies the inclusion ``$\supseteq$'' and also the transitivity, since $-\alpha_0$ can be mapped to every other minuscule $\alpha_j$.
For the inclusion ``$\subseteq$'', let $w \in \widetilde{W}$. If $w(-\alpha_0) = -\alpha_0$ then $w \in \Aut(D) \cap W = 1$, which also proves the simple transitivity. Otherwise $w(-\alpha_0) = \alpha_j$, for some $i>0$, and, since $w \in \widetilde{W}$, Corollary \ref{corol1:euler} implies that $w = v_j$.
\end{proof}

Next, we show that $\Aut(\widetilde{D})$ acts on the set of roots with any fixed $m_i$ coefficient, that is, $\Aut(\widetilde{D})$ preserves the $m_i$ coefficients, 
even thought in general it may not be transitive anymore.

\begin{prop}
\label{lemma1}
Let $\zeta \in \mathrm{Aut}(D)$. If $\zeta(\alpha_j) = \alpha_k$ then $\zeta(\varpi^\vee_j) = \varpi^\vee_k$ and $m_j = m_k$. Furthermore, $\zeta(\alpha_0) = \alpha_0$.
\end{prop}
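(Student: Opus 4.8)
The plan is to exploit the defining duality $(\varpi_i^\vee,\alpha_j)=\delta_{ij}$ together with the fact that an element $\zeta\in\Aut(D)\le\O(V)$ is an isometry preserving $\Pi$, hence preserves the inner product. First I would observe that since $\zeta$ permutes the simple roots, it permutes the basis $\Pi$ and therefore permutes the dual basis $\{\varpi_i^\vee\}$ in the matching way: from $(\zeta\varpi_j^\vee,\alpha_k)=(\varpi_j^\vee,\zeta^{-1}\alpha_k)$ and the hypothesis $\zeta(\alpha_j)=\alpha_k$ (equivalently $\zeta^{-1}(\alpha_k)=\alpha_j$), one gets $(\zeta\varpi_j^\vee,\alpha_\ell)=(\varpi_j^\vee,\zeta^{-1}\alpha_\ell)=\delta_{j,\,\zeta^{-1}(\ell)}=\delta_{k,\ell}$ for all $\ell$, using that $\zeta$ permutes indices; since the $\varpi^\vee$ are characterized by these equations, $\zeta(\varpi_j^\vee)=\varpi_k^\vee$.

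Next, for the claim $m_j=m_k$: the coefficient $m_i$ of $\alpha_i$ in $\alpha_0=\sum_i m_i\alpha_i$ can be recovered as $m_i=(\varpi_i^\vee,\alpha_0)$, again by the duality relations. So I would show $\zeta(\alpha_0)=\alpha_0$ first and then conclude $m_k=(\varpi_k^\vee,\alpha_0)=(\zeta\varpi_j^\vee,\zeta\alpha_0)=(\varpi_j^\vee,\alpha_0)=m_j$, the middle equality being $\zeta$-invariance of the inner product. For $\zeta(\alpha_0)=\alpha_0$: this is already remarked in the preliminaries (``Since $\Aut(D)$ stabilizes $\Pi$, it fixes the corresponding highest root $\alpha_0$''), but for completeness I would note that $\alpha_0$ is the unique highest root of $\Phi$ with respect to $\Pi$, i.e.\ the unique root $\beta$ such that $\beta-\alpha_i$ is not a positive root... more simply, $\alpha_0$ is the highest root and $\zeta$ maps positive roots to positive roots (it permutes $\Pi$) preserving heights, so $\zeta(\alpha_0)$ is again a root of maximal height, hence equals $\alpha_0$ by uniqueness of the highest root in an irreducible system.

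Actually the cleanest ordering is: (1) $\zeta(\alpha_0)=\alpha_0$ since $\zeta$ permutes simple roots, hence permutes positive roots preserving height, and $\alpha_0$ is the unique highest root; (2) $\zeta(\varpi_j^\vee)=\varpi_k^\vee$ from the duality computation above; (3) $m_j=(\varpi_j^\vee,\alpha_0)$ by pairing $\alpha_0=\sum m_i\alpha_i$ against $\varpi_j^\vee$; (4) combine: $m_j=(\varpi_j^\vee,\alpha_0)=(\zeta\varpi_j^\vee,\zeta\alpha_0)=(\varpi_k^\vee,\alpha_0)=m_k$. I do not expect any real obstacle here; the only point requiring minor care is tracking the index permutation correctly in step (2)—writing $\zeta$ as inducing a permutation $\sigma$ of $I$ with $\zeta(\alpha_i)=\alpha_{\sigma(i)}$ makes the bookkeeping transparent and avoids sign or direction confusion between $\zeta$ and $\zeta^{-1}$.
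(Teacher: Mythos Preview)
Your proposal is correct and follows essentially the same approach as the paper: both use the duality $(\varpi_i^\vee,\alpha_j)=\delta_{ij}$ together with $\zeta\in\O(V)$ to get $\zeta(\varpi_j^\vee)=\varpi_k^\vee$, and both invoke that $\zeta$ fixes the highest root $\alpha_0$ because it permutes $\Pi$. The only cosmetic difference is that the paper deduces $m_j=m_k$ by applying $\zeta$ to the expansion $\alpha_0=\sum m_i\alpha_i$ and comparing coefficients, whereas you use the equivalent identity $m_i=(\varpi_i^\vee,\alpha_0)$ and the isometry property.
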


\begin{proof}
If $\zeta(\alpha_j) = \alpha_{\pi(j)}$, where $\pi$ is a permutation of $\{1,\ldots,n\}$, then $\zeta^{-1}(\alpha_k) = \alpha_{\pi^{-1}(k)}$.
Now
$( \zeta(\varpi^\vee_j), \alpha_k ) 
= ( \varpi^\vee_j, \zeta^{-1}(\alpha_k) )
= ( \varpi^\vee_j, \alpha_{\pi^{-1}(k)} )
= \delta_{j,\pi^{-1}(k)}
= \delta_{\pi(j), k}$, which proves that $\zeta(\varpi^\vee_j) = \varpi^\vee_{\pi(j)}$.
For the second claim, $\zeta$ maps the system of simple roots $\Pi$ into itself, thus fixes the corresponding highest roots, so that $\zeta(\alpha_0) = \alpha_0$. Writing $\alpha_0 = \sum_{i>0} m_{\pi(j)} \alpha_{\pi(j)}$ we get from $\zeta(\alpha_0) = \alpha_0$ that
$$
\sum_{i>0} m_j \alpha_{\pi(j)} = \sum_{i>0} m_{\pi(j)} \alpha_{\pi(j)}
$$
so that $m_j = m_{\pi(j)}$, as claimed.
\end{proof}

\begin{prop}
\label{lemma2}
Let $\zeta \in \mathrm{Aut}(\widetilde{D})$.
If $\zeta(\alpha_j) = \alpha_k$ for $j, k > 0$, then $m_j = m_k$.
\end{prop}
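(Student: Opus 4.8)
The plan is to exploit the single linear relation that defines the highest root. Write $\widetilde{\alpha}_0 := -\alpha_0$ and $\widetilde{\alpha}_i := \alpha_i$ for $i>0$, and set $m_0 := 1$, so that the nodes of $\widetilde{D}$ are $\widetilde{\alpha}_0,\dots,\widetilde{\alpha}_n$ and the identity $\alpha_0 = \sum_{i>0}m_i\alpha_i$ becomes $\sum_{i=0}^n m_i\widetilde{\alpha}_i = 0$. First I would observe that these $n+1$ vectors span the $n$-dimensional space $V$ (already $\alpha_1,\dots,\alpha_n$ do), so the space of linear relations among them is exactly one-dimensional, hence spanned by the mark vector $(m_0,\dots,m_n)$.

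Next, since $\zeta\in\Aut(\widetilde{D})$ is a linear isometry stabilizing $\{-\alpha_0\}\cup\Pi$, it permutes the $\widetilde{\alpha}_i$: there is a permutation $\sigma$ of $\{0,1,\dots,n\}$ with $\zeta(\widetilde{\alpha}_i)=\widetilde{\alpha}_{\sigma(i)}$. Applying $\zeta$ to $\sum_i m_i\widetilde{\alpha}_i = 0$ and reindexing gives $\sum_j m_{\sigma^{-1}(j)}\widetilde{\alpha}_j = 0$, which is again a linear relation among the $\widetilde{\alpha}_j$; by the one-dimensionality above, $(m_{\sigma^{-1}(j)})_j = c\,(m_j)_j$ for some $c\in\R$, and $c>0$ because all marks are positive. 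To pin down $c$, I would simply sum over all indices: $\sum_j m_{\sigma^{-1}(j)}=\sum_j m_j$ is a reindexing identity, while the left-hand side also equals $c\sum_j m_j$; since $\sum_j m_j>0$ this forces $c=1$. Hence $m_{\sigma(i)}=m_i$ for every $i$, and in particular $\zeta(\alpha_j)=\alpha_k$ with $j,k>0$ means $\sigma(j)=k$, whence $m_j=m_k$.

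I do not expect a real obstacle here; the only point needing a little care is the determination of the proportionality constant $c$ (alternatively, since $\zeta$ has finite order, so does $\sigma$, and iterating $m_{\sigma^{-1}(j)}=c\,m_j$ yields $c^{\,\mathrm{ord}(\sigma)}=1$ with $c>0$, so $c=1$). It is worth noting that the argument actually proves the stronger statement that $\Aut(\widetilde{D})$ preserves all marks $m_i$ (with the convention $m_0=1$): this recovers the claim $\zeta(\alpha_0)=\alpha_0$ from Proposition \ref{lemma1} as the case $\sigma(0)=0$, and it is consistent with Corollary \ref{corol2:euler}, since $\zeta(\alpha_j)=-\alpha_0$ is then possible only when $m_j=m_0=1$, i.e.\ $\alpha_j$ is minuscule.
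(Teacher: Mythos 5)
Your proof is correct, and it takes a genuinely different route from the paper's. The paper argues by cases: if $\zeta$ fixes $-\alpha_0$ it falls back on Proposition \ref{lemma1}, and otherwise it expands $\zeta(-\alpha_0)=-\sum_j m_j\zeta(\alpha_j)$, substitutes the expression for $\alpha_0$ in the simple-root basis, and compares coefficients to extract $m_i=m_k=1$ and $m_r=m_{\pi^{-1}(r)}$. You instead observe that the $n+1$ nodes of $\widetilde{D}$ span the $n$-dimensional space $V$, so the space of linear relations among them is exactly one-dimensional and is spanned by the mark vector $(m_0,\dots,m_n)$ with $m_0=1$; since the linear map $\zeta$ permutes the nodes, it carries this relation to a positive multiple of itself, and summing the marks (or using that $\sigma$ has finite order) pins the constant to $1$. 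This gives a single uniform argument with no case split, and it yields the stronger conclusion that every element of $\Aut(\widetilde{D})$ preserves all marks including $m_0$ — which, as you note, simultaneously recovers $\zeta(\alpha_0)=\alpha_0$ for $\zeta\in\Aut(D)$ and the fact that only minuscule roots can be sent to $-\alpha_0$, consistent with Corollary \ref{corol2:euler}. The paper's computation buys the same information but distributes it over two lemmas; your packaging via the one-dimensional relation space is cleaner and is the standard structural way to see that the marks are an invariant of the affine diagram.
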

\begin{proof}
If $\zeta(-\alpha_0) = -\alpha_0$ then $\zeta$ restricts to an automorphism of $D$ so that Proposition~\ref{lemma1} implies the conclusion about $m_j = m_k$. If, on the other hand, $\zeta(-\alpha_0) = \alpha_i$ for some $i>0$, then there exists $k>0$ such that $\zeta(\alpha_k) = -\alpha_0$ and, for $j > 0$ with $j \neq k$, we have $\zeta(\alpha_j) = \alpha_{\pi(j)}$ with $\pi(j) \neq i$. 
Thus, $\pi$ is a bijection from $\{j=1,\dots,n: j \neq k \}$ to $\{j=1,\dots,n: j \neq i \}$. 
It follows that
$$
\zeta(- \alpha_0) = - \sum_j m_j \zeta(\alpha_j) = m_k \alpha_0 - \sum_{j\neq k} m_j \alpha_{\pi(j)}.
$$
Substituting $j = \pi^{-1}(r)$ we get
$$
\zeta(- \alpha_0) = m_k \alpha_0 - \sum_{r\neq i} m_{\pi^{-1}(r)} \alpha_r
$$
Using that $\alpha_0 = m_i \alpha_i + \sum_{r\neq i} m_r \alpha_r$, we get
\begin{eqnarray*}
\zeta(- \alpha_0)  
= m_k m_i \alpha_i + \sum_{r\neq i} (m_k m_r - m_{\pi^{-1}(r)}) \alpha_r 
= \alpha_i
\end{eqnarray*}
so that
$m_i = m_k = 1$, and thus
$m_r - m_{\pi^{-1}(r)} = 0$, for $r \neq i$.  Substituting back $r = \pi(j)$, we get $m_j = m_{\pi(j)}$ for $j \neq k$, as claimed.
\end{proof}

We finish this section by obtaining, with an alternative proof, the representation of $v_j \in \widetilde{W}$ as a product of principal involutions (see VI.\S 2.3, Prop.~6 of \cite{Bo68}). Let $w^-$ be the principal involution of $W$, that is, the unique element $w^- \in W$ such that $w^-\Pi = -\Pi$, and note that $(w^-)^2=1$.
Let $W_j \leq W$ be Weyl group of
  the root system generated by the simple roots in
  $\Pi\setminus \{ \alpha_j \}$, and $w^-_j$ its principal involution, so that $w^-_j(\Pi\setminus \{ \alpha_j \}) = -(\Pi\setminus \{ \alpha_j \})$ and $(w^-_j)^2=1$.

\begin{prop}
\label{prop:prod-invol}
$v_j = w_j^- w^-$
\end{prop}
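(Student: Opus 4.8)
The plan is to verify that $w_j^- w^-$ is the unique element of $\widetilde{W}$ sending $-\alpha_0$ to $\alpha_j$, and then appeal to Corollary~\ref{corol1:euler}. First I would record the two defining properties of the principal involutions: $w^-(\Pi) = -\Pi$ and $w_j^-(\Pi \setminus \{\alpha_j\}) = -(\Pi \setminus \{\alpha_j\})$, with $w_j^-$ acting as the identity-competitor only on the span of $\Pi \setminus \{\alpha_j\}$ (more precisely, $w_j^-$ fixes $\varpi_j^\vee$ and negates-and-permutes the other simple roots). A clean way to proceed is to compute the image of each element of the simple system $\Pi_j = \{-\alpha_0\} \cup \{\alpha_i : i \neq j\}$ under the composite $w_j^- w^-$ and show it lands in $\Pi$, so that $(w_j^- w^-)^{-1}$ — equivalently $w^- w_j^-$, since both factors are involutions — maps $\Pi$ to $\Pi_j$; by uniqueness in Corollary~\ref{corol1:euler} this forces $w_j^- w^- = v_j$ (after checking orientation, i.e.\ that it is $v_j$ and not $v_j^{-1}$, using $v_j(-\alpha_0) = \alpha_j$).

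The key computation is the action on $-\alpha_0$. Applying $w^-$ first, $w^-(-\alpha_0)$ is the highest root of the system $-\Pi$, i.e.\ $w^-(-\alpha_0) = \beta$ where $-\beta$ is the lowest root of $\Pi$; but the lowest root of $\Pi$ is $-\alpha_0$, so $w^-(-\alpha_0) = \alpha_0$. Wait — more carefully: $w^-(\alpha_0)$ is the lowest root $-\alpha_0$, hence $w^-(-\alpha_0) = \alpha_0$. Then I must show $w_j^-(\alpha_0) = \alpha_j$. This is the crux: since $\alpha_j$ is minuscule, $\alpha_0 - \alpha_j$ lies in the nonnegative span of $\Pi \setminus \{\alpha_j\}$ and in fact $\alpha_0$ is the highest root of the parabolic subsystem $\Phi_j$ generated by $\Pi \setminus \{\alpha_j\}$ \emph{shifted by} $\alpha_j$ — concretely, $\alpha_0 = \alpha_j + \gamma$ where $\gamma$ is the highest root of $\Phi_j$ (this uses $m_j = 1$). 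Since $w_j^-$ negates the highest root of $\Phi_j$ to its lowest, $w_j^-(\gamma) = -\gamma$, and $w_j^-(\alpha_j) = \alpha_j + (\text{something in span of }\Pi\setminus\{\alpha_j\})$; combining, $w_j^-(\alpha_0) = w_j^-(\alpha_j) + w_j^-(\gamma)$. To nail this I would instead use the cleaner identity $w_j^-(\varpi_j^\vee) = \varpi_j^\vee$ together with the fact that $\alpha_0$ restricted to the span of $\Pi \setminus \{\alpha_j\}$ determines it up to the $\varpi_j^\vee$-component, and $(\alpha_0, \varpi_j^\vee)$-type pairings; alternatively, decompose $\alpha_0 = \alpha_j + \gamma$ and compute directly that $w_j^- w^-$ permutes $\{-\alpha_0\} \cup \{\alpha_i\}_{i \neq j}$. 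For $i \neq j$: $w^-(\alpha_i) \in -\Pi$, say $w^-(\alpha_i) = -\alpha_{\sigma(i)}$; if $\sigma(i) \neq j$ then $w_j^-(-\alpha_{\sigma(i)}) \in \Pi \setminus \{\alpha_j\}$ and we are fine, while if $\sigma(i) = j$, i.e.\ $w^-(\alpha_i) = -\alpha_j$, then $w_j^-(-\alpha_j) = -w_j^-(\alpha_j)$ and one checks $w_j^-(\alpha_j) = \alpha_0$ (the reverse of the earlier computation, since $w_j^-$ is an involution), giving $-\alpha_0$.

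The main obstacle is precisely the identity $w_j^-(\alpha_0) = \alpha_j$ (equivalently $w_j^-(\alpha_j) = \alpha_0$): it is where minusculeness enters, and it requires knowing that $\alpha_0 - \alpha_j$ is the highest root of the parabolic $\Phi_j = \langle \Pi \setminus \{\alpha_j\}\rangle$. I would prove this as a short lemma: since $m_j = 1$, for any $\beta \in \Phi_j$ we have $\alpha_j + \beta \in \Phi \cup \{0\}$ forces nothing directly, but $\alpha_0$ has $\alpha_j$-coefficient exactly $1$, so $\alpha_0 - \alpha_j \in \mathrm{span}_{\geq 0}(\Pi \setminus\{\alpha_j\})$ is a root (as $\alpha_0 - \alpha_j$ is obtained from $\alpha_0$ by subtracting the simple root $\alpha_j$, and this lowering is valid iff the $\alpha_j^\vee$-string condition holds, which it does since $m_j = 1$ means we can take exactly one step); and it is the highest root of $\Phi_j$ because adding any $\alpha_i$, $i \neq j$, to $\alpha_0 - \alpha_j$ would give a root of $\Phi$ strictly higher than $\alpha_0$ in the $\alpha_i$-coefficient, contradicting maximality of $\alpha_0$. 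With that lemma in hand, $w_j^-(\alpha_0 - \alpha_j) = -(\alpha_0 - \alpha_j)$, and since $w_j^-$ also satisfies $w_j^-(\alpha_j) = \alpha_j + c(\alpha_0 - \alpha_j)$ for the unique $c$ making the total land correctly — actually the cleanest finish is: $w_j^-$ fixes the hyperplane $\{\,\cdot\,\}$ spanned by $\Pi\setminus\{\alpha_j\}$ pointwise-up-to-the-$w^-_j$ action and acts on the line $\R\varpi_j^\vee$ trivially, so writing $\alpha_0 = \alpha_j + \gamma$ with $\gamma$ the highest root of $\Phi_j$, $w_j^-(\alpha_0) = w_j^-(\alpha_j) - \gamma$, and expanding $w_j^-(\alpha_j)$ via $(w_j^-(\alpha_j), \varpi_i^\vee) = (\alpha_j, w_j^-(\varpi_i^\vee))$ pins it down to $\alpha_j + \gamma \cdot(\text{coefficient } 2)$... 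I would ultimately present the pairing-with-coweights argument, as it avoids case analysis: check $(w_j^- w^-(-\alpha_0), \varpi_i^\vee) = \delta_{?}$ patterns matching $v_j$. Once $w_j^- w^-$ is shown to send $-\alpha_0 \mapsto \alpha_j$ and to lie in $\widetilde W$ (it stabilizes $\{-\alpha_0\} \cup \Pi$ by the permutation computation), Corollary~\ref{corol1:euler} gives $w_j^- w^- = v_j$, completing the proof.
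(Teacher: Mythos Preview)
Your overall strategy is the same as the paper's: reduce to proving $w_j^-(\alpha_0)=\alpha_j$, deduce that $w_j^-\Pi_j=-\Pi$, and then invoke Corollary~\ref{corol1:euler}. The difficulty you correctly isolate---the identity $w_j^-(\alpha_0)=\alpha_j$---is indeed the whole point.

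However, your proposed proof of that identity has a genuine error. You claim that $\gamma:=\alpha_0-\alpha_j$ is a root, namely the highest root of the parabolic subsystem $\Phi_j=\langle\Pi\setminus\{\alpha_j\}\rangle$, arguing that ``$m_j=1$ means we can take exactly one step'' down the $\alpha_j$-string. This conflates the coefficient $m_j$ with the root-string length $\langle\alpha_0,\alpha_j^\vee\rangle$, which are different quantities. Concretely, in type $D_n$ with $j=1$ one has $\alpha_0=e_1+e_2$ and $\alpha_1=e_1-e_2$, so $\alpha_0-\alpha_1=2e_2$, which is not a root at all; the same failure occurs in $B_n$, $C_n$, $E_6$, $E_7$. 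Your decomposition $\alpha_0=\alpha_j+\gamma$ with $\gamma\in\Phi_j$ simply does not exist outside type~$A$, and everything built on it (including $w_j^-(\gamma)=-\gamma$) collapses.

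The paper's argument for $w_j^-(\alpha_0)=\alpha_j$ avoids this trap. It first observes that $w_j^-$ fixes $\varpi_j^\vee$, so $w_j^-\alpha_j$ has $\alpha_j$-coefficient~$1$ and hence $w_j^-\alpha_j=\alpha_j+\sum_{i\ne j}c_i\alpha_i$ with $c_i\ge 0$. Writing $w_j^-\alpha_i=-\alpha_{\pi(i)}$ for $i\ne j$ and using $m_j=1$, one expands $w_j^-\alpha_0=\alpha_j+\sum_{i\ne j}(c_i-m_{\pi(i)})\alpha_i$. This root is positive (its $\alpha_j$-coefficient is~$1$), forcing $c_i\ge m_{\pi(i)}$; combined with $c_i\le m_i$ (since $w_j^-\alpha_j\le\alpha_0$) and the fact that $\pi$ is an involution, one squeezes $m_{\pi(i)}=c_i=m_i$, whence $w_j^-\alpha_0=\alpha_j$. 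This is the missing ingredient: minusculity enters not through a root-string step but through the bound $c_i\le m_i$ and the shape of the expansion.
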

\begin{proof}
First we prove that $w_j^- \alpha_0 = \alpha_j$. Since the roots in $\Pi\setminus \{ \alpha_j \}$ annihilate $\varpi^\vee_j$, it follows that $W_j$ centralizes $\varpi^\vee_j$, thus the coefficient of $\alpha_j$ in $w_j^- \alpha_j$ is 
\[ (w_j^- \alpha_j, \varpi^\vee_j) = 
(\alpha_j, w_j^- \varpi^\vee_j) =
(\alpha_j, \varpi^\vee_j) = 1.\]
In particular $w_j^- \alpha_j$ is a positive root so that 
$w_j^-(\alpha_j) = \alpha_j + \sum_{i\neq j} c_i \alpha_i$,
with non-negative integer coefficients $c_i$. 
Since $w_j^-$ sends the $\alpha_i$, $i\neq j$, to negative simple roots, we have that $w^-_j \alpha_i = -\alpha_{\pi(j)}$ where $\pi$ is an involution of $\{j:\, i \neq j\}$. It follows that
\begin{equation}
\label{eq:proof-invol}
w_j^- \alpha_0 = w_j^-\Big( \alpha_j + \sum_{i\neq j} m_i \alpha_i\Big)
= \alpha_j + \sum_{i\neq j} c_i \alpha_i - \sum_{i\neq j} m_i \alpha_{\pi(i)} =
\alpha_j + \sum_{i\neq j} (c_i - m_{\pi(i)}) \alpha_i
\end{equation}
is a positive root, where in the next to the last equation in \eqref{eq:proof-invol} we changed the last summation variable from $i$ to $\pi(i)$. It follows that $m_{\pi(i)} \leq c_i$ so that
$$
m_{\pi(i)} \leq c_i \leq m_i
$$
As $\pi$ is an involution and $m_{\pi(i)} \leq m_i$ holds for all $i$,
we have $m_{\pi(i)} = m_i$, so that  $m_{\pi(i)} = c_i $ and \eqref{eq:proof-invol} implies $w^-_j\alpha_0 = \alpha_j$.
Since
$$
\Pi_j = (\Pi\setminus \{ \alpha_j \}) \cup \{-\alpha_0\}
$$
this implies that
$$
w_j^-\Pi_j = -(\Pi\setminus \{ \alpha_j \}) \cup \{-\alpha_j\} = -\Pi, 
$$
so that $w^-w_j^-\Pi_j = \Pi$. The characterization of $v_j$ in Corollary \ref{corol1:euler} then implies that 
$v_j = (w^-w_j^-)^{-1} = w_j^-w^-$, as claimed.
\end{proof}

\begin{rem}
Proposition \ref{prop:prod-invol} can be used to read the $v_j$ from the Dynkin diagram as follows.
Write $-w^- \in \Aut(D)$ and $-w_j^- \in \Aut(D \setminus \{\alpha_j\})$ as permutation of roots. The product permutation $v_j = (-w_j^-)(-w^-)$ is not in $\Aut(D)$, since it does not take $\alpha_j$ to a simple root. But we know beforehand that $v_j(-\alpha_0) = \alpha_j$, thus we can complete the permutation for $v_j$ by taking $0$ to $j$ so that it becomes a permutation in $\Aut(\widetilde{D}) \cap W = \widetilde{W}$.
The following table describes the group structure of $\widetilde{W}$ 
(see \cite{Ga23}). 

\begin{figure}[h!]
\includegraphics[width=15.5cm]{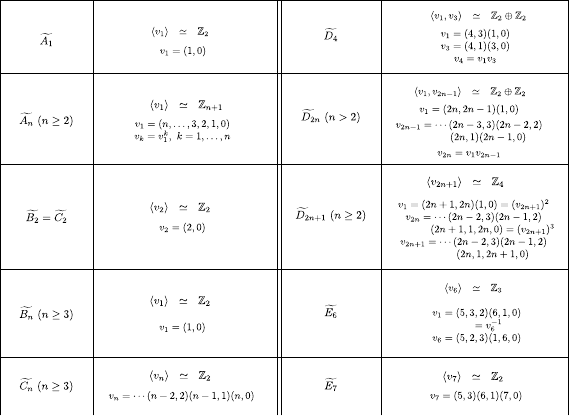}
\captionof{table}{Group structure of $\widetilde{W}$
with permutations of roots $v_j$ in cycle notation and
indices on the corresponding affine Dynkin diagrams in Table \ref{table:thm1}.
}
\label{table:omega}
\end{figure}

\end{rem}


\section{Fundamental group and minuscule coweights}

Recall the fundamental group $\Omega$ of $W_{\mathrm{aff}}$, defined in \eqref{eq:omega} as the stabilizer of $\A$ in $W_{\mathrm{ext}}$.  In this section we use results of the previous section to give an alternative proof of a well known description of $\Omega$ (see Proposition VI.\S 2.3, 6 of \cite{Bo68}).  

\begin{theo}
\label{theo:alcove-pij}
The fundamental alcove w.r.t.\ the system of simple roots $\Pi_j$ is given by
\begin{equation}
v_j \A = \A - \varpi^\vee_j
\end{equation}
\end{theo}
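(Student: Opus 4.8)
The plan is to compute both sides of the claimed identity directly as intersections of half-spaces and observe that they coincide; the hypothesis is that $\Pi_j$ is a system of simple roots, i.e. $\alpha_j$ is minuscule (Theorem~\ref{thm:simple-system-i}), and the two crucial inputs are Corollary~\ref{corol1:euler} (which gives $v_j\Pi=\Pi_j$ and $v_j(-\alpha_0)=\alpha_j$, and in particular $v_j\in W$, so $v_j$ acts linearly on $V$) and the equality $m_j=1$.

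First I would unwind the left-hand side. Since $v_j\in W\subseteq\O(V)$ acts linearly and $\mathcal{A}=\{x\in V\mid(\alpha_i,x)\geq0\ (i\in I),\ (\alpha_0,x)\leq1\}$, we have $v_j\mathcal{A}=\{y\in V\mid(v_j\alpha_i,y)\geq0\ (i\in I),\ (v_j\alpha_0,y)\leq1\}$. By Corollary~\ref{corol1:euler} the set $\{v_j\alpha_i\mid i\in I\}$ equals $\Pi_j=\{-\alpha_0\}\cup\{\alpha_i\mid i\neq j\}$, and from $v_j(-\alpha_0)=\alpha_j$ and linearity we get $v_j\alpha_0=-\alpha_j$, so
\[
v_j\mathcal{A}=\{y\in V\mid (-\alpha_0,y)\geq0,\ (\alpha_i,y)\geq0\ (i\neq j),\ (-\alpha_j,y)\leq1\}.
\]
By Theorem~\ref{thm:simple-system-i}, $\alpha_j$ is the lowest root of $\Pi_j$, hence $-\alpha_j$ is its highest root; thus the right-hand side above is precisely the fundamental alcove attached to the simple system $\Pi_j$, which justifies the phrasing of the statement.

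Next I would compute the right-hand side. Writing $\mathcal{A}-\varpi^\vee_j=\{z\in V\mid z+\varpi^\vee_j\in\mathcal{A}\}$ and using $(\varpi^\vee_j,\alpha_i)=\delta_{ij}$ together with $(\varpi^\vee_j,\alpha_0)=\sum_{i\in I}m_i(\varpi^\vee_j,\alpha_i)=m_j=1$ (this is where minusculity enters), the inequalities $(\alpha_i,z+\varpi^\vee_j)\geq0$ become $(\alpha_i,z)\geq0$ for $i\neq j$ and $(\alpha_j,z)\geq-1$, while $(\alpha_0,z+\varpi^\vee_j)\leq1$ becomes $(\alpha_0,z)\leq0$. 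Comparing with the displayed description of $v_j\mathcal{A}$, the two polyhedra have identical defining inequalities, hence are equal, proving the theorem. As a byproduct one reads off that $t_{\varpi^\vee_j}v_j$ stabilizes $\mathcal{A}$ and lies in $W_{\mathrm{ext}}=P^\vee\rtimes W$, hence in $\Omega$, which is the route to the classical description of $\Omega$ alluded to above.

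I do not anticipate a serious obstacle: the argument is a short computation. The only points requiring care are keeping track of signs when transporting the inequality $(\alpha_0,x)\leq1$ through both the linear map $v_j$ and the translation by $-\varpi^\vee_j$, and invoking minusculity at exactly the right spot, namely $(\varpi^\vee_j,\alpha_0)=m_j$ — if $m_j>1$ the two sides would genuinely differ. A slightly more geometric alternative would be to compare the $n+1$ vertices of each alcove (recall $\vertices(\mathcal{A})=\{0\}\cup\{\varpi^\vee_i/m_i\mid i\in I\}$), but tracking $v_j$ on coweights is messier than tracking it on roots, so I would keep the half-space computation as the main argument.
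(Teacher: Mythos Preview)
Your proof is correct and follows essentially the same approach as the paper: both identify each side with the fundamental alcove attached to $\Pi_j$ via the same three ingredients --- $v_j\Pi=\Pi_j$, the identification of $-\alpha_j$ as the highest root of $\Pi_j$ (Theorem~\ref{thm:simple-system-i}), and the computation $(\varpi^\vee_j,\alpha_0)=m_j=1$. The only cosmetic difference is that the paper argues $v_j\mathcal{A}$ is the $\Pi_j$-alcove abstractly from $v_j\Pi=\Pi_j$, while you write out the half-space description of $v_j\mathcal{A}$ explicitly; the content is the same.
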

\begin{proof}
Since by \eqref{eq:def-v-omega}, we have $v_j \Pi = \Pi_j$, it follows immediately that $v_j \A$ is the fundamental alcove w.r.t.\ $\Pi_j$. On the other hand, for $x \in \A$ we have
\begin{eqnarray*}
\alpha_i(x - \varpi^\vee_j) & = & \alpha_i(x) \geq 0, \quad i \neq j \\ 
\alpha_0(x - \varpi^\vee_j) & = & \alpha_0(x) - 1  \leq 0\phantom{-} \! 
\quad \Longrightarrow -\alpha_0(x - \varpi^\vee_j) \geq 0 \\
\alpha_j(x - \varpi^\vee_j) & = & \alpha_j(x) - 1 \geq -1 \quad \Longrightarrow -\alpha_j(x - \varpi^\vee_j) \leq 1
\end{eqnarray*}
which is the fundamental alcove w.r.t.\ $\Pi_j = \{ -\alpha_0,\, \alpha_i:\, i \neq j \}$ since, by Theorem \ref{thm:simple-system-i}, its highest root is $-\alpha_j$. 
\end{proof}

The vertices of the fundamental alcove $\A$ are given by
\begin{equation}
\label{eq:vertices-A}
\{0,\, \varpi^\vee_i/m_i:\, i=1,\ldots,n\}
\end{equation}
Thus, its intersection with the coweight lattice $P^\vee$ is given by
\begin{equation}
\label{eq:euler vertices}
\A \cap P^\vee = \{ 0,\, \varpi^\vee_j:\, \varpi^\vee_j \text{ is minuscule}\, \}
\end{equation}
the {\it minuscule vertices of $\A$}. 
For $\varpi^\vee_j \in \A \cap P^\vee$, 
Theorem \ref{theo:alcove-pij} implies that $v_j \A + \varpi^\vee_j = \A$, so that 
\begin{equation}
\omega_j := t_{\varpi^\vee_j} v_j \,\, \in \Omega
\end{equation}
Reciprocally, we have the following.

\begin{cor}
\label{propos:omega-elements}
We have that
\begin{equation}
\label{eq:omega-elements}
\Omega = \{ 1,\, \omega_j:\, \alpha_j \text{ is minuscule}\, \}
\end{equation}
which acts simply transitively on the subset of minuscule vertices $\A \cap P^\vee$. 
\end{cor}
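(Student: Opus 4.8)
The plan is to show that the evaluation map $\omega\mapsto\omega(0)$ identifies $\Omega$ with $\A\cap P^\vee$; this yields simultaneously the explicit list \eqref{eq:omega-elements} and the simple transitivity. The inclusion $\{1\}\cup\{\omega_j:\alpha_j\text{ minuscule}\}\subseteq\Omega$ has already been observed above, since each $\omega_j=t_{\varpi^\vee_j}v_j$ fixes $\A$ by Theorem~\ref{theo:alcove-pij}. So the content is the reverse inclusion together with the freeness of the $\Omega$-action.

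First I would take $\omega\in\Omega$ and write $\omega=t_\mu w$ with $\mu\in P^\vee$ and $w\in W$, using $W_{\mathrm{ext}}=P^\vee\rtimes W$. Evaluating at the origin gives $\mu=\omega(0)\in\omega(\A)=\A$, hence $\mu\in\A\cap P^\vee$, so by \eqref{eq:euler vertices} either $\mu=0$ or $\mu=\varpi^\vee_j$ for some minuscule $\alpha_j$. In the first case $\omega=w\in W\subseteq W_{\mathrm{aff}}$ stabilizes the alcove $\A$, hence $\omega=1$ by the simple transitivity of $W_{\mathrm{aff}}$ on alcoves. In the second case $w\A=t_{-\varpi^\vee_j}\omega(\A)=\A-\varpi^\vee_j$, which equals $v_j\A$ by Theorem~\ref{theo:alcove-pij}; therefore $v_j^{-1}w\in W$ fixes $\A$ and is trivial, so $w=v_j$ and $\omega=t_{\varpi^\vee_j}v_j=\omega_j$. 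This establishes \eqref{eq:omega-elements}.

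Next, for the simple transitivity on $\A\cap P^\vee$, I would use that a group action is simply transitive exactly when, at some point $x_0$, the orbit map $\omega\mapsto\omega\cdot x_0$ is a bijection onto the set. Taking $x_0=0$: the orbit map $\Omega\to\A\cap P^\vee$ sends $1\mapsto0$ and $\omega_j=t_{\varpi^\vee_j}v_j\mapsto\varpi^\vee_j$, because $v_j\in W$ fixes $0$. Since the minuscule fundamental coweights are pairwise distinct and nonzero and, by \eqref{eq:euler vertices}, exhaust $(\A\cap P^\vee)\setminus\{0\}$, this map is a bijection, and hence $\Omega$ acts simply transitively on $\A\cap P^\vee$.

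The only step demanding a little care is the claim $\omega(0)\in\A$, but this is immediate since $0\in\A$ is a vertex (see \eqref{eq:vertices-A}) and $\omega(\A)=\A$, so no argument about $\omega$ permuting the vertices of $\A$ is needed; I also invoke $W\cap\mathrm{Stab}_{W_{\mathrm{ext}}}(\A)=\{1\}$, which holds because $W\subseteq W_{\mathrm{aff}}$ and $W_{\mathrm{aff}}$ is simply transitive on alcoves. Consequently I do not expect a genuine obstacle. As a consistency check, the argument gives $|\Omega|=|\A\cap P^\vee|$, matching the isomorphism $\Omega\cong P^\vee/Q^\vee$ from \eqref{eq:decompos-ext-omega-b}.
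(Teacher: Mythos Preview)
Your proof is correct and follows essentially the same route as the paper's: decompose $\omega=t_\mu w$, observe that $\mu=\omega(0)\in\A\cap P^\vee$, invoke \eqref{eq:euler vertices} and Theorem~\ref{theo:alcove-pij} to pin down $w$, and then use the orbit map at $0$ for simple transitivity. The only cosmetic difference is that the paper phrases the step $\mu\in\A$ as ``$0\in w\A$ implies $\gamma\in\A$'' and explicitly notes that $\Omega$ preserves $\A\cap P^\vee$ before discussing transitivity, whereas you extract this from the orbit being all of $\A\cap P^\vee$; both are valid.
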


\begin{proof}
Let $\zeta = t_\gamma w \in W_{\mathrm{ext}}$, with $\gamma \in P^\vee$ and $w \in W$, be such that $\zeta \in \Omega$, so that
$$ \A = \zeta(\A) = w\A + \gamma. $$
Since $0 \in w\A$, it follows that $\gamma \in \A$. By \eqref{eq:euler vertices} it follows that $\gamma$ is either $0$ or a minuscule vertex $\varpi^\vee_j$. 
We will use that $W_{\mathrm{aff}}$, thus $W$, acts simply transitively on the set of alcoves.
If $\gamma=0$ then $w\A = \A$, which implies $w = 1$, so that $\zeta = 1$. 
If, otherwise, $\gamma = \varpi^\vee_j$, then $w\A = \A - \varpi^\vee_j$ so that Theorem \ref{theo:alcove-pij} implies that $w = v_j$, so that $\zeta = \omega_j$.

Now $\Omega$ stabilizes both $\A$ (by definition) and $P^\vee$ (which is $W_{\mathrm{ext}}$-invariant), thus it acts on the set $\A\cap P^\vee$ of minuscule vertices. Since $\omega_j(0) = \varpi^\vee_j$, it follows that this action is transitive and that the stabilizer of $0$ in $\Omega$ is trivial, thus it is simply transitive.
\end{proof}

\begin{lem}
\label{propos:aut-D-subgrp}
The subgroup of $\Aut(\A)$ generated by $\Omega$ and $\Aut(D)$ is a
semidirect product
\begin{equation}
\label{eq:aut-D-subgrp}
\Omega \rtimes \Aut(D) \leq \Aut(\A)
\end{equation}
\end{lem}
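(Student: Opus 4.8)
The plan is to establish the three standard facts that upgrade a pair of subgroups of a common ambient group to an internal semidirect product: that $\Omega$ and $\Aut(D)$ both lie in $\Aut(\A)$, that $\Aut(D)$ normalizes $\Omega$, and that $\Omega\cap\Aut(D)=\{1\}$. Once $\Aut(D)$ normalizes $\Omega$, the set $\Omega\cdot\Aut(D)$ is already a subgroup of $\Aut(\A)$, it contains both factors, and it is contained in any subgroup containing both; so it is precisely the subgroup generated by $\Omega$ and $\Aut(D)$. With $\Omega$ normal in it and $\Omega\cap\Aut(D)=\{1\}$, this subgroup is the internal semidirect product $\Omega\rtimes\Aut(D)$, as claimed.

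First I would note that $\Omega\subseteq\Aut(\A)$ is immediate from its definition \eqref{eq:omega} as the stabilizer of $\A$ inside $W_{\mathrm{ext}}\subseteq\E(V)$. For $\Aut(D)\subseteq\Aut(\A)$, take $\phi\in\Aut(D)\subseteq\O(V)$; it permutes $\Pi$ and fixes $\alpha_0$ (as recorded just before \eqref{eq:decompos-aut-2}), so for $x\in\A$ the inequalities $(\alpha_i,\phi x)=(\phi^{-1}\alpha_i,x)\geq 0$ and $(\alpha_0,\phi x)=(\alpha_0,x)\leq 1$ show $\phi(\A)\subseteq\A$, hence $\phi(\A)=\A$ since $\phi$ is invertible with $\phi^{-1}\in\Aut(D)$ as well.

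Next, to see that $\Aut(D)$ normalizes $\Omega$, I would first check that it normalizes $W_{\mathrm{ext}}=P^\vee\rtimes W$: an element $\phi\in\Aut(D)\subseteq\Aut(\Phi)$ normalizes $W$ because $W\trianglelefteq\Aut(\Phi)$, and it preserves $P^\vee$ because $(\phi\omega,\alpha)=(\omega,\phi^{-1}\alpha)\in\Z$ for every $\alpha\in\Phi$ when $\omega\in P^\vee$; together with $\phi t_\gamma\phi^{-1}=t_{\phi(\gamma)}$ this yields $\phi W_{\mathrm{ext}}\phi^{-1}=W_{\mathrm{ext}}$. Then for $\zeta\in\Omega$ the conjugate $\phi\zeta\phi^{-1}$ lies in $W_{\mathrm{ext}}$ and satisfies $\phi\zeta\phi^{-1}(\A)=\phi(\A)=\A$, so $\phi\zeta\phi^{-1}\in\Omega$. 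For the trivial intersection I would invoke Corollary \ref{propos:omega-elements}: $\Omega$ acts simply transitively on $\A\cap P^\vee$, and $0$ is one of these points, so any $\zeta\in\Omega\cap\Aut(D)$, being linear, fixes $0$ and is therefore trivial. (Alternatively, $\Omega\cap\O(V)=\Omega\cap W\subseteq W\cap\Aut(D)=\{1\}$ using the decomposition \eqref{eq:decompos-aut-1}.)

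None of these steps is genuinely hard; the only point that needs a little attention is that $\Aut(D)$ normalizes the full group $W_{\mathrm{ext}}$ — in particular that it stabilizes the coweight lattice $P^\vee$, not merely the coroot lattice $Q^\vee$ — and that is the step I would flag as the main (mild) obstacle.
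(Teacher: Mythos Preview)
Your proof is correct and follows essentially the same route as the paper: verify that $\Aut(D)$ normalizes $W_{\mathrm{ext}}$ (via $W\trianglelefteq\Aut(\Phi)$ and $\Aut(D)$ stabilizing $P^\vee$), hence normalizes $\Omega$ since it also stabilizes $\A$, and then check $\Omega\cap\Aut(D)=\{1\}$. The only cosmetic differences are that the paper invokes Proposition~\ref{lemma1} for the stability of $P^\vee$ where you argue directly with the inner product, and it cites Corollary~\ref{corol3:euler} (via linear parts) for the trivial intersection where you use Corollary~\ref{propos:omega-elements}; your version is arguably cleaner on both counts.
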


\begin{proof}
  To see that $\Omega \cap \Aut(D) = \{1\}$, note that $\Aut(D)$
  fixes $-\alpha_0$, while, by Corollary~\ref{corol3:euler}, $\Omega$ acts simply transitively on $\{ -\alpha_0, \, \alpha_j: j \in J \}$.  
  To show that $\Aut(D)$ normalizes $\Omega$, first note that $\Aut(D)$ is a subgroup of $\Aut(\Phi)$, which normalizes $W$.
  By Proposition~\ref{lemma1}, $\Aut(D)$ permutes the coweights, thus stabilizes the coweight lattice $P^\vee$. Thus, $\Aut(D)$ normalizes $W_{\mathrm{ext}} = P^\vee \rtimes W$. Since $\Aut(D)$ stabilizes $\A$, it follows that it normalizes $\Omega$, by its definition~\eqref{eq:omega}.
\end{proof}

By \eqref{eq:decompos-aut-2} and \eqref{eq:aut-D-subgrp}, 
$\Aut(D)$ acts by conjugation on $\widetilde{W}$ and $\Omega$. The equivariance \eqref{eq:equivariance} then implies the following.
\begin{cor}
\label{corol4:euler}
Projection to the linear part gives an isomorphism
\begin{equation}
\label{eq:restr-pi}
\pi: \Omega \to \widetilde{W}
\end{equation}
which is equivariant w.r.t.\ the $\Aut(D)$-action by conjugation.
\end{cor}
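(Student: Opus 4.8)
The plan is to exhibit the isomorphism $\pi\colon\Omega\to\widetilde W$ concretely on the generators and then check equivariance using the equivariance of $\pi$ under conjugation. First I would recall that $\omega_j = t_{\varpi^\vee_j}v_j$ with $v_j\in\widetilde W\subseteq W$, so by definition of the semidirect product $W_{\mathrm{ext}} = P^\vee\rtimes W$ the linear part of $\omega_j$ is exactly $\pi(\omega_j) = v_j$. By Corollary~\ref{propos:omega-elements} the set $\{1\}\cup\{\omega_j : \alpha_j \text{ minuscule}\}$ is precisely $\Omega$, and by Corollary~\ref{corol3:euler} the set $\{1\}\cup\{v_j : \alpha_j \text{ minuscule}\}$ is precisely $\widetilde W$. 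Hence $\pi$ maps $\Omega$ onto $\widetilde W$ and the map is a bijection on the level of these indexing sets. Since $\pi$ is a group homomorphism (restriction of the global homomorphism $\E(V)\to\O(V)$) and it is a bijection between the two finite sets, it is a group isomorphism $\Omega\xrightarrow{\ \sim\ }\widetilde W$.

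Next I would address injectivity more carefully, since that is the only genuinely non-formal point: I must rule out that some nontrivial $\omega\in\Omega$ has trivial linear part, i.e.\ $\Omega\cap P^\vee = \{1\}$. But $\Omega\cap P^\vee$ consists of pure translations $t_\gamma$ fixing $\A$, which forces $\A + \gamma = \A$ and hence $\gamma = 0$ since $\A$ is bounded. Alternatively this follows from the already-established simple transitivity of $\widetilde W$ on $\{-\alpha_0\}\cup\{\alpha_j : \text{minuscule}\}$ (Corollary~\ref{corol3:euler}) together with simple transitivity of $\Omega$ on $\A\cap P^\vee$ (Corollary~\ref{propos:omega-elements}): the two sets have the same cardinality, $\pi$ matches them up, so a bijection between equal-size sets that is also a homomorphism is an isomorphism.

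For the equivariance, I would invoke Lemma~\ref{propos:aut-D-subgrp}, which says $\Aut(D)$ normalizes $\Omega$ inside $\Aut(\A)$, and the decomposition~\eqref{eq:decompos-aut-2} which says $\Aut(D)$ normalizes $\widetilde W$ inside $\Aut(\widetilde D)$. So conjugation by any $\phi\in\Aut(D)$ is a well-defined automorphism of each of $\Omega$ and $\widetilde W$. Now for $\omega\in\Omega$ and $\phi\in\Aut(D)\subseteq\O(V)$, the equivariance identity~\eqref{eq:equivariance}, namely $\pi(\phi\omega\phi^{-1}) = \phi\,\pi(\omega)\,\phi^{-1}$, is exactly the statement that $\pi$ intertwines the two conjugation actions. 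This is immediate and requires no further work once the well-definedness of both actions is in place.

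The main obstacle, such as it is, is purely bookkeeping: making sure that the codomain is really $\widetilde W$ and not just some subgroup of $W$, i.e.\ that every $v_j$ appearing as a linear part of an element of $\Omega$ does lie in $\widetilde W = W\cap\Aut(\widetilde D)$ and that all of $\widetilde W$ is hit. Both directions are already packaged in Corollaries~\ref{corol3:euler} and~\ref{propos:omega-elements}, so the proof reduces to assembling these facts; there is no analytic or combinatorial difficulty remaining.
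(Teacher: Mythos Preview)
Your proposal is correct and follows essentially the same route as the paper: compute $\pi(\omega_j)=v_j$ from $\omega_j=t_{\varpi^\vee_j}v_j$, then invoke the explicit descriptions of $\Omega$ and $\widetilde W$ in Corollaries~\ref{propos:omega-elements} and~\ref{corol3:euler} to conclude bijectivity; the equivariance is handled exactly as in the paper, via \eqref{eq:equivariance} together with the fact (from \eqref{eq:decompos-aut-2} and Lemma~\ref{propos:aut-D-subgrp}) that $\Aut(D)$ normalizes both groups. Your write-up is simply more expansive than the paper's, which compresses all of this into one sentence and relegates the equivariance to the remark immediately preceding the corollary.
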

\begin{proof}
The fact that it is an isomorphism follows from $\pi(\omega_j) = \pi(t_{\varpi^\vee_j} v_j) = v_j$, and the descriptions of $\widetilde{W}$ in Corollary \ref{corol3:euler} and of $\Omega$ in Corollary \ref{propos:omega-elements}. 
\end{proof}

We finish this section by obtaining, with an alternative proof, the Dirichlet domain of $\Omega$ in $\A$ (see Equation (4.5) of \cite{Ya08}). Given a subgroup $G \subseteq \E(V)$ acting on $\A$ by affine isometries, its Dirichlet domain around the origin is given by 
\begin{equation}
\mathcal{D}(G) := \{ x \in \A:\, \|x\| \leq \|gx\|
\quad \text{for all } g \in G \}
\end{equation}
and is a fundamental domain for the $G$-action in $\A$ if the isotropy of $0$ is trivial.
 
\begin{prop}
\label{prop:dirichilet}
$\mathcal{D}(\Omega) =  \{ x \in \mathcal{A} \mid (\forall j \in J)\ \|x\| \leq \|x - \varpi^\vee_j\| \}$ 
\end{prop}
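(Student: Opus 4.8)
The statement to prove is that the Dirichlet domain of $\Omega$ around the origin inside $\A$ is exactly
$$
\mathcal{D}(\Omega) = \{ x \in \A \mid (\forall j \in J)\ \|x\| \le \|x - \varpi^\vee_j\| \},
$$
where $J$ indexes the minuscule roots. By definition, $\mathcal{D}(\Omega) = \{ x \in \A : \|x\| \le \|\omega x\| \text{ for all } \omega \in \Omega\}$, so the whole content is to show that among the conditions $\|x\| \le \|\omega x\|$, $\omega \in \Omega$, only those coming from $\omega = \omega_j$ (with $\alpha_j$ minuscule) are needed, and that for these the inequality $\|x\| \le \|\omega_j x\|$ simplifies to $\|x\| \le \|x - \varpi^\vee_j\|$. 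The first thing I would do is invoke Corollary~\ref{propos:omega-elements}, which tells us $\Omega = \{1\} \cup \{\omega_j : \alpha_j \text{ minuscule}\}$, so the intersection over $\omega \in \Omega$ is literally the intersection over the finite set $\{\omega_j\}$ (the $\omega = 1$ condition being vacuous). Hence $\mathcal{D}(\Omega) = \{ x \in \A : \|x\| \le \|\omega_j x\| \text{ for all minuscule } \alpha_j\}$ with no further reduction needed — this is the main simplification and it is essentially free given the earlier structural results.

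The remaining step is to rewrite $\|\omega_j x\|$. Here $\omega_j = t_{\varpi^\vee_j} v_j$ with $v_j \in \widetilde W \le W \le \O(V)$, so $\omega_j x = v_j x + \varpi^\vee_j$. Since $v_j$ is a linear isometry, $\|v_j x\| = \|x\|$, but that alone does not finish it because of the translation term. The trick is to apply the isometry $v_j^{-1}$, which is again a linear isometry fixing norms:
$$
\|\omega_j x\| = \|v_j x + \varpi^\vee_j\| = \|v_j^{-1}(v_j x + \varpi^\vee_j)\| = \|x + v_j^{-1}\varpi^\vee_j\|.
$$
Now I need to identify $v_j^{-1}\varpi^\vee_j$. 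By Proposition~\ref{prop:prod-invol}, $v_j = w_j^- w^-$, hence $v_j^{-1} = w^- w_j^-$; since $W_j$ centralizes $\varpi^\vee_j$ (the roots in $\Pi\setminus\{\alpha_j\}$ annihilate it, as used in the proof of Prop.~\ref{prop:prod-invol}), we get $w_j^- \varpi^\vee_j = \varpi^\vee_j$, so $v_j^{-1}\varpi^\vee_j = w^- \varpi^\vee_j$. Finally $w^-\varpi^\vee_j = -\varpi^\vee_{j^*}$ where $j^*$ is the index with $-w^-(\alpha_j) = \alpha_{j^*}$ (the opposition involution on the Dynkin diagram), and $\alpha_{j^*}$ is again minuscule by Proposition~\ref{lemma1} (applied to $-w^- \in \Aut(D)$, which preserves the $m_i$ coefficients). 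Therefore
$$
\|\omega_j x\| = \|x - \varpi^\vee_{j^*}\|.
$$
Since $j \mapsto j^*$ is a bijection on the set of minuscule indices, ranging over all minuscule $\alpha_j$ is the same as ranging over all minuscule $\alpha_{j^*}$, so the system of inequalities $\{\|x\| \le \|\omega_j x\|\}_j$ coincides with $\{\|x\| \le \|x - \varpi^\vee_j\|\}_j$, which is exactly the claimed description with $J$ the minuscule indices.

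**Main obstacle.** The only genuinely delicate point is the computation $v_j^{-1}\varpi^\vee_j = w^-\varpi^\vee_j = -\varpi^\vee_{j^*}$ and the verification that $j \mapsto j^*$ permutes the minuscule indices; everything else is a direct consequence of Corollaries~\ref{propos:omega-elements} and Proposition~\ref{prop:prod-invol}. In fact, one could sidestep the explicit identification of $v_j^{-1}\varpi^\vee_j$ altogether: it suffices to observe that $\Omega$ acts simply transitively on the minuscule vertices $\A \cap P^\vee = \{0\} \cup \{\varpi^\vee_j : \varpi^\vee_j \text{ minuscule}\}$ (Corollary~\ref{propos:omega-elements}), so $\{\omega_j 0 : \alpha_j \text{ minuscule}\} = \{\varpi^\vee_j : \varpi^\vee_j \text{ minuscule}\}$, and hence $\{v_j^{-1}\varpi^\vee_j\} = \{-\omega_j^{-1}0\}$ runs precisely over $\{-\varpi^\vee_j : \text{minuscule}\}$ as $j$ varies — giving the same conclusion without pinning down the individual bijection. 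I would present the argument this way to keep it short.
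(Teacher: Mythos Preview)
Your proof is correct and follows essentially the same approach as the paper: reduce to the elements $\omega_j$ via Corollary~\ref{propos:omega-elements}, write $\|\omega_j x\| = \|x + v_j^{-1}\varpi^\vee_j\|$ using that $v_j$ is a linear isometry, and then identify $v_j^{-1}\varpi^\vee_j$ as $-\varpi^\vee_k$ for some minuscule $k$. The paper carries out the last step by the direct semidirect-product computation $1=\omega_i\omega_j=t_{\varpi^\vee_i+v_i\varpi^\vee_j}v_iv_j$ with $\omega_j=\omega_i^{-1}$, which yields $v_i^{-1}\varpi^\vee_i=-\varpi^\vee_j$; your preferred alternative, observing $v_j^{-1}\varpi^\vee_j=-\omega_j^{-1}(0)$ and invoking simple transitivity on minuscule vertices, is the same computation in different clothing. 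Your first route through $v_j=w_j^-w^-$ and the opposition involution also works but is a detour compared to the one-line semidirect-product identity the paper uses.
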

\begin{proof}
For $j \in J$ we have
\[ \|\omega_i x\| = \|v_ix + \varpi^\vee_i\| = \|x + v_i^{-1}\varpi^\vee_i\|\] 
Let $\omega_i^{-1} = \omega_j$. From
$$
1 = \omega_i \omega_j 
= t_{\varpi^\vee_i} v_i t_{\varpi^\vee_j} v_j
= t_{\varpi^\vee_i} t_{v_i\varpi^\vee_j} v_i v_j
= t_{\varpi^\vee_i + v_i\varpi^\vee_j} v_i v_j 
$$
it follows that $\varpi^\vee_i + v_i\varpi^\vee_j = 0$, so that $ v_i^{-1}\varpi^\vee_i = -\varpi^\vee_j$. Thus $\|\omega_i x\| = \|x - \varpi^\vee_j\|$ and the result follows by \eqref{eq:omega-elements}.
\end{proof}

\section{Automorphisms of the fundamental alcove}

Let $\sigma$ be a simplex and consider the simplex $\nu$ whose vertices are the inward unit normals to the faces of $\sigma$.

\begin{prop}
\label{prop:iso-angle-angle}
Projection to the linear part yields the isomorphism
$\pi: \Aut(\sigma) \to \Aut(\nu)$,
where $\Aut(\nu) = \{ \phi \in \O(V) \mid \phi(\nu) = \nu \}$.
\end{prop}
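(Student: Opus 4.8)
The plan is to show that $\pi:\Aut(\sigma)\to\Aut(\nu)$ is well defined, injective, and surjective. For well-definedness, observe that the incircle center $p_*$ of $\sigma$ is preserved by any $\zeta\in\Aut(\sigma)$ since it is characterized intrinsically (the unique point equidistant to all faces). If $\nu_i$ is the inward unit normal to the face $\sigma(p_i)$ of $\sigma$, then $\zeta$ permutes the faces, and because $\zeta$ is an affine isometry it carries the inward unit normal of a face to the inward unit normal of its image; since translations do not change normals, $\pi(\zeta)$ permutes $\{\nu_0,\dots,\nu_n\}$. As $\nu=\conv(\{\nu_i\})$, this gives $\pi(\zeta)(\nu)=\nu$, so $\pi(\zeta)\in\Aut(\nu)$.

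For injectivity, suppose $\zeta\in\Aut(\sigma)$ has $\pi(\zeta)=\id$, so $\zeta=t_\gamma$ is a translation. A translation fixing the bounded set $\sigma$ setwise must be trivial (e.g.\ it fixes the circumcenter $q_*$, or it fixes $p_*$, forcing $\gamma=0$). Hence $\ker\pi=\{1\}$ and $\pi$ is injective on $\Aut(\sigma)$.

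The substantive point is surjectivity. Given $\phi\in\Aut(\nu)$, we must produce $\zeta\in\Aut(\sigma)$ with $\pi(\zeta)=\phi$; the natural candidate is $\zeta:=t_{p_*}\circ\phi\circ t_{-p_*}$, i.e.\ the isometry fixing $p_*$ whose linear part is $\phi$. One checks $\pi(\zeta)=\phi$ immediately, so the work is to verify $\zeta(\sigma)=\sigma$. Since $\phi$ permutes the $\nu_i$, say $\phi(\nu_i)=\nu_{\rho(i)}$ for a permutation $\rho$, the isometry $\zeta$ sends the affine hyperplane supporting the face $\sigma(p_i)$ — which is $\{x:(\nu_i,x-p_*)=-r\}$, where $r$ is the inradius, since $p_*$ is at distance $r$ from every face on the inward side — to the hyperplane $\{x:(\nu_{\rho(i)},x-p_*)=-r\}$, which is exactly the support hyperplane of $\sigma(p_{\rho(i)})$. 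Thus $\zeta$ permutes the $n+1$ facet hyperplanes of $\sigma$, and since $\sigma$ is the intersection of the corresponding half-spaces $\{(\nu_i,x-p_*)\ge -r\}$ (all containing $p_*$, which $\zeta$ fixes), $\zeta$ maps this intersection onto itself: $\zeta(\sigma)=\sigma$.

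The main obstacle is the geometric bookkeeping in the surjectivity step: one must know that all facet hyperplanes of a simplex are tangent to the insphere (so they all have the same signed distance $-r$ from $p_*$ along the respective inward normals), which is precisely the defining property of $p_*$ recalled in the Preliminaries, and that a simplex is recovered as the intersection of the half-spaces through $p_*$ bounded by these hyperplanes. Granting that, the permutation action of $\phi$ on $\{\nu_i\}$ transfers directly to a permutation action of $\zeta$ on the facet half-spaces, giving $\zeta(\sigma)=\sigma$. Combining well-definedness, injectivity and surjectivity, $\pi:\Aut(\sigma)\to\Aut(\nu)$ is a group isomorphism.
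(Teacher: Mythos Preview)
Your proof is correct and follows essentially the same approach as the paper: both pivot on the incenter $p_*$ and construct the inverse of $\pi$ via $\zeta(x)=\phi(x-p_*)+p_*$. Your surjectivity argument (permuting the facet half-spaces at signed distance $-r$ from $p_*$) is a slightly more explicit variant of the paper's argument (permuting the tangent points of the insphere), but the underlying idea is identical. One small point you omit: the paper also justifies the equality $\Aut(\nu)=\{\phi\in\O(V)\mid\phi(\nu)=\nu\}$ by observing that the circumcenter of $\nu$ is the origin (the vertices being unit vectors), so every isometry of $\nu$ is linear; you tacitly treat this as a definition rather than a claim.
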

\begin{proof}
First, let $q^*$ be the unique point equidistant to the vertices of $\nu$, it follows that an isometry of $\nu$ fixes $q^*$. Since the vertices are unit normals, the origin $0$ is equidistant to the vertices, it follows that $q^* = 0$. Thus, every isometry of $\nu$ fixes $0$, so that $\Aut(\nu) \leq \O(V)$, which proves the second claim.

For the first claim, let $p^*$ be the unique point equidistant to the faces of $\sigma$: it is the center of the sphere inscribed in $\sigma$, which is tangent to each one of its faces at a unique point. 
\begin{figure}[h!]
\def\svgwidth{9cm}
\begingroup%
  \makeatletter%
  \providecommand\color[2][]{%
    \errmessage{(Inkscape) Color is used for the text in Inkscape, but the package 'color.sty' is not loaded}%
    \renewcommand\color[2][]{}%
  }%
  \providecommand\transparent[1]{%
    \errmessage{(Inkscape) Transparency is used (non-zero) for the text in Inkscape, but the package 'transparent.sty' is not loaded}%
    \renewcommand\transparent[1]{}%
  }%
  \providecommand\rotatebox[2]{#2}%
  \newcommand*\fsize{\dimexpr\f@size pt\relax}%
  \newcommand*\lineheight[1]{\fontsize{\fsize}{#1\fsize}\selectfont}%
  \ifx\svgwidth\undefined%
    \setlength{\unitlength}{237.76484482bp}%
    \ifx\svgscale\undefined%
      \relax%
    \else%
      \setlength{\unitlength}{\unitlength * \real{\svgscale}}%
    \fi%
  \else%
    \setlength{\unitlength}{\svgwidth}%
  \fi%
  \global\let\svgwidth\undefined%
  \global\let\svgscale\undefined%
  \makeatother%
  \begin{picture}(1,0.29675579)%
    \lineheight{1}%
    \setlength\tabcolsep{0pt}%
    \put(0,0){\includegraphics[width=\unitlength,page=1]{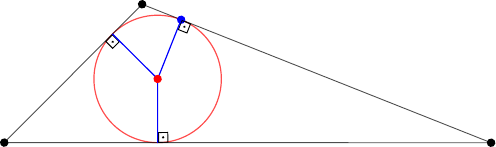}}%
    \put(0.34426389,0.12831091){\color[rgb]{1,0,0}\rotatebox{0.72471113}{\makebox(0,0)[lt]{\lineheight{1.25}\smash{\begin{tabular}[t]{l}$p^*$\end{tabular}}}}}%
    \put(0,0){\includegraphics[width=\unitlength,page=2]{incenter.pdf}}%
  \end{picture}%
\endgroup%

\end{figure}
It follows that an isometry of $\sigma$ fixes $p^*$ and permutes its points of intersection with the faces of $\sigma$, where the directed segment from each point of intersection to the center of the sphere has the direction of the inward normal to the corresponding face.
It follows that $\phi \in \Aut(\nu)$ determines uniquely $\zeta \in \Aut(\sigma)$ such that $\pi(\zeta) = \phi$, given by $\zeta(x) = \phi(x - p^*) + p^*$.
\end{proof}

%
Recall the projection of a Dynkin diagram onto its corresponding Coxeter diagram. Geometrically, it amounts to unit normalization of the root vectors that realize the diagram.  Indeed, the Dynkin diagram encodes the angles and relative lengths between the root vectors of a simple system so that, normalizing them, we end up with only the angle data, which is precisely what is encoded by the corresponding Coxeter diagram.

Note that projecting the Dynkin diagram $D$ of the root system $\Phi$ we get the Coxeter diagram $C$ of its Weyl group $W$, where clearly $\Aut(D) \subseteq \Aut(C)$.

\begin{prop}
\label{propos:AutC}
$\Aut(\mathcal{C}) = \Aut(C)$.
\end{prop}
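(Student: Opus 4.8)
The plan is to show the two subgroups of $\O(V)$ coincide by a double inclusion, using the incenter/normal-simplex picture from Proposition \ref{prop:iso-angle-angle} adapted to the (unbounded) cone $\mathcal{C}$. First I would observe the easy inclusion $\Aut(C) \subseteq \Aut(\mathcal{C})$: an element $\phi \in \Aut(C) = \Aut(D)$ permutes the simple roots $\Pi$, hence permutes the supporting hyperplanes $\{\alpha_i = 0\}$ of the facets of $\mathcal{C}$ while preserving their inward side, and being linear it fixes $0$ (the apex of the cone), so $\phi(\mathcal{C}) = \mathcal{C}$. The content is in the reverse inclusion.

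For $\Aut(\mathcal{C}) \subseteq \Aut(C)$, first I would record that every $\zeta \in \Aut(\mathcal{C})$ is in fact linear: $\mathcal{C}$ is a pointed cone with apex the origin (since $\Pi$ spans $V$, $\mathcal{C}$ contains no line and $0$ is its unique extreme point), and an affine isometry carrying $\mathcal{C}$ to itself must carry its apex to its apex, hence fix $0$; thus $\Aut(\mathcal{C}) \leq \O(V)$, which is the parenthetical claim already stated in the excerpt. Next, such a $\phi \in \O(V)$ must permute the codimension-one faces of $\mathcal{C}$, hence permute their inward unit normals. These inward unit normals are exactly the unit-normalized simple roots $\alpha_i / \|\alpha_i\|$, i.e.\ the vertices of the spherical simplex realizing the Coxeter diagram $C$. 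So $\phi$ induces a permutation $\pi$ of $I$ with $\phi(\alpha_i/\|\alpha_i\|) = \alpha_{\pi(i)}/\|\alpha_{\pi(i)}\|$, and since $\phi$ preserves angles (being orthogonal), this permutation is an automorphism of the Coxeter diagram $C$: the label of the edge $\{i,j\}$, which is determined by the angle between $\alpha_i$ and $\alpha_j$, is preserved. Hence $\phi \in \Aut(C)$.

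The one point that needs a little care — and the step I expect to be the main obstacle — is justifying that an isometry of the cone $\mathcal{C}$ genuinely permutes the facet normals (rather than, say, sending a facet into the relative interior of a larger face), and that it does so preserving unit normalization. For this I would argue that a facet of $\mathcal{C}$ is an intrinsic notion: it is a codimension-one face, and an isometry $\phi$ with $\phi(\mathcal{C}) = \mathcal{C}$ maps the face lattice of $\mathcal{C}$ isomorphically to itself, hence permutes the $n$ facets $\mathcal{C} \cap \{\alpha_i = 0\}$; the inward unit normal to a facet is then determined by $\mathcal{C}$ and the facet alone, so $\phi$ carries inward unit normals to inward unit normals. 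Once this combinatorial fact is in place, the angle-preservation of orthogonal maps finishes the argument, and together with the first paragraph we conclude $\Aut(\mathcal{C}) = \Aut(C)$.
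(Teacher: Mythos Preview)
Your approach is essentially identical to the paper's: both directions proceed by the same face-lattice/unit-normal argument, and in particular the paper also argues that $\zeta \in \Aut(\mathcal{C})$ must fix $0$ (as the unique lowest-dimensional face), hence is linear, hence permutes the unit inward normals to the facets, hence lies in $\Aut(C)$.

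There is, however, one slip in your easy inclusion: you write ``$\phi \in \Aut(C) = \Aut(D)$ permutes the simple roots $\Pi$.'' In general $\Aut(D) \subsetneq \Aut(C)$ (e.g.\ in type $B_2$ the Coxeter diagram has a nontrivial symmetry while the Dynkin diagram does not), and an element of $\Aut(C)$ only permutes the \emph{unit-normalized} simple roots $\alpha_i/\|\alpha_i\|$, not the $\alpha_i$ themselves. The paper handles this correctly by writing $\phi(\alpha_i) = \lambda_{ij}\alpha_j$ with $\lambda_{ij} = \|\alpha_i\|/\|\alpha_j\| > 0$; since this positive scalar does not change the half-space $\{\alpha_i \geq 0\}$, the conclusion $\phi(\mathcal{C}) = \mathcal{C}$ still follows. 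Your argument is easily repaired in the same way.
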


\begin{proof}
Let $\phi \in \Aut(C)$, then $\phi \in \O(V)$ permutes the unit normalized roots in $\Pi$. Thus $\phi(\alpha_i) = \lambda_{ij} \alpha_j$, with
$\lambda_{ij} = \|\alpha_i\|/\|\alpha_j\| >0$,
so that $\phi(\mathcal{C}) = \mathcal{C}$ and then $\phi \in \Aut(\mathcal{C})$. Reciprocally, let $\zeta \in \Aut(\mathcal{C})$. Since $\zeta( \mathcal{C} ) = \mathcal{C}$, it permutes the top and lower dimensional faces of $\mathcal{C}$. In particular $\zeta(0) = (0)$, so that $\zeta \in \O(V)$
permutes the unit inward normals to the faces of $\mathcal{C}$, given precisely by the unit normalization of the roots in the  Dynkin diagram~$D$. Thus $\zeta$ permutes the vertices of the realization of the corresponding Coxeter diagram $C$ preserving their angles, so that $\zeta \in \Aut(C)$, as claimed.
\end{proof}

\begin{prop}
\label{propos:AutA-projection}
Projection to the linear part yields the isomorphism
\begin{equation}
\label{eq:proj-A}
\pi: \Aut(\A) \to \Aut(\widetilde{D})
\end{equation}
which restricts to the identity on $\Aut(D)$.
\end{prop}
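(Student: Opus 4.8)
The plan is to reduce to Proposition~\ref{prop:iso-angle-angle} and then identify $\Aut(\nu)$ by hand. Applying that proposition to the simplex $\sigma=\A$, whose unit inward normals are $\hat\alpha_0:=-\alpha_0/\|\alpha_0\|$ and $\hat\alpha_i:=\alpha_i/\|\alpha_i\|$ for $i\in I$, we already get that $\pi$ restricts to an isomorphism $\Aut(\A)\to\Aut(\nu)$ with $\Aut(\nu)\le\O(V)$, where $\nu=\conv\{\hat\alpha_0,\dots,\hat\alpha_n\}$. So it is enough to prove the equality of subgroups $\Aut(\nu)=\Aut(\widetilde{D})$ inside $\O(V)$; the claim that $\pi$ is the identity on $\Aut(D)$ then follows because $\Aut(D)\le\O(V)$ sits inside both $\Aut(\A)$ (it fixes $0$ and stabilizes $\Pi$ and $\alpha_0$, hence $\A$) and $\Aut(\widetilde{D})$, and $\pi$ is the identity on $\O(V)$.

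One inclusion is easy: any $\phi\in\Aut(\widetilde{D})\le\Aut(\Phi)\le\O(V)$ permutes $\{-\alpha_0\}\cup\Pi$, hence permutes the unit vectors $\hat\alpha_a$, hence fixes $\nu$. For the reverse inclusion I would start with $\phi\in\Aut(\nu)$, which permutes the $\hat\alpha_a$ by some permutation $\rho$ of $\{0\}\cup I$, say $\phi(\hat\alpha_a)=\hat\alpha_{\rho(a)}$. The key point is that the $n+1$ vectors $\hat\alpha_a$ satisfy, up to scaling, a unique linear relation: rewriting $\sum_i m_i\alpha_i=\alpha_0=-\|\alpha_0\|\hat\alpha_0$ gives $\sum_a c_a\hat\alpha_a=0$ with $c_0=\|\alpha_0\|$ and $c_i=m_i\|\alpha_i\|$, all positive once we set $m_0:=1$. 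Applying $\phi$ to this relation produces a second relation among the $\hat\alpha_a$; one-dimensionality of the relation space together with positivity of the $c_a$ forces $c_{\rho(a)}=c_a$ for all $a$. Squaring this yields $m_{\rho(a)}^2\|\alpha_{\rho(a)}\|^2=m_a^2\|\alpha_a\|^2$, and since the ratio of squared root lengths in an irreducible system is $1$, $2$, $3$ or a reciprocal thereof, while $m_a^2/m_{\rho(a)}^2$ is a square of a rational, both ratios must equal $1$. Thus $\rho$ preserves root lengths and marks, so $\phi$ sends each vector of $\{-\alpha_0\}\cup\Pi$ to another one — not merely up to scaling: $\phi$ permutes $\{-\alpha_0\}\cup\Pi$.

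Finally I would upgrade this to $\phi\in\Aut(\Phi)$, which gives $\phi\in\Aut(\widetilde{D})$ by definition. If $\rho$ fixes $0$ then $\phi$ stabilizes $\Pi$, so $\phi\in\Aut(D)\subseteq\Aut(\widetilde{D})$. Otherwise $\phi(-\alpha_0)=\alpha_j$ with $m_j=m_0=1$, so $\alpha_j$ is minuscule and $\Pi_j$ is a system of simple roots by Theorem~\ref{thm:simple-system-i}, and a quick bookkeeping of $\rho$ (with $k:=\rho^{-1}(0)$, noting $m_k=1$ and $\|\alpha_k\|=\|\alpha_0\|$) shows $\phi(\Pi)=\Pi_j$. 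Either way $\phi$ carries the simple system $\Pi$ onto a simple system of $\Phi$, whence $\phi W\phi^{-1}=W$ and $\phi(\Phi)=\phi(W\Pi)=\Phi$, so $\phi\in\Aut(\Phi)$; combined with $\phi(\{-\alpha_0\}\cup\Pi)=\{-\alpha_0\}\cup\Pi$ this gives $\phi\in\Aut(\widetilde{D})$. I expect the main obstacle to be the middle step: it is precisely the one-dimensionality of the relation space together with the integrality of the marks $m_i$ that prevents an isometry of $\nu$ from realizing a symmetry of the plain Coxeter diagram $\widetilde{C}$ that fails to respect the arrows of $\widetilde{D}$; once $\phi$ is known to permute $\{-\alpha_0\}\cup\Pi$ honestly, the rest is the combinatorics of minuscule roots from Section~\ref{minuscule-roots}.
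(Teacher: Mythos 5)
Your proposal is correct, and it diverges from the paper's proof exactly at the decisive step. Both arguments begin by invoking Proposition~\ref{prop:iso-angle-angle} for $\sigma=\A$, reducing everything to the identification of the isometry group of the normal simplex $\nu$. The paper then observes that $\Aut(\nu)=\Aut(\widetilde{C})$ and asserts $\Aut(\widetilde{C})=\Aut(\widetilde{D})$ on the grounds that an affine Dynkin diagram is determined by its Coxeter diagram, a fact justified by inspection of Table~\ref{table:thm1} and a reference to Bourbaki --- i.e.\ it leans on the classification at that point. You instead prove $\Aut(\nu)=\Aut(\widetilde{D})$ directly and uniformly: the unique (up to scale) linear relation $\sum_a c_a\hat\alpha_a=0$ with $c_0=\|\alpha_0\|$, $c_i=m_i\|\alpha_i\|>0$ is preserved by any $\phi\in\Aut(\nu)$, the one-dimensionality of the relation space (the normals being in general position) forces $c_{\rho(a)}=c_a$, and the arithmetic observation that a squared-length ratio $1,2,3,\tfrac12,\tfrac13$ which is simultaneously a square of a rational must be $1$ then pins down both the marks and the root lengths, so $\phi$ genuinely permutes $\{-\alpha_0\}\cup\Pi$; the upgrade to $\phi\in\Aut(\Phi)$ via $\phi W\phi^{-1}=W$ is routine. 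What your route buys is a classification-free explanation of precisely \emph{why} a symmetry of the Coxeter diagram $\widetilde{C}$ cannot fail to respect the arrows of $\widetilde{D}$: the positive relation among the normals is the hidden invariant. What the paper's route buys is brevity, at the cost of an appeal to case inspection. Your treatment of the restriction to $\Aut(D)$ matches the paper's. The only cosmetic gap is the phrase ``a quick bookkeeping'' for $\phi(\Pi)=\Pi_j$; it is indeed immediate ($\phi(\Pi)=(\{-\alpha_0\}\cup\Pi)\setminus\{\alpha_j\}=\Pi_j$), but you should say so explicitly.
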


\begin{proof}
Since the unit inward normals to the faces of $\A$ are given precisely by the unit normalization of the roots in the affine Dynkin diagram $\widetilde{D}$, thus precisely by the vertices of the realization of the corresponding Coxeter diagram $\widetilde{C}$, Proposition \ref{prop:iso-angle-angle} gives us the isomorphism
\[
\pi: \Aut(\A) \to \Aut(\widetilde{C})
\]
Projecting now the affine Dynkin diagram $\widetilde{D}$
to the Coxeter diagram $\widetilde{C}$ of its affine Weyl group $W_{\mathrm{aff}}$, the inclusion
$\Aut(\widetilde{D}) \subseteq \Aut(\widetilde{C})$
becomes an isomorphism, since the affine Dynkin diagram of a reduced root system is completely determined by its associated Coxeter diagram, as can be seen by inspection of the first column of Table \ref{table:thm1} (see also Remark in p.211, Chap.~VI, \S4.3 of \cite{Bo68}). It follows that
\[ 
\Aut(\widetilde{C}) = \Aut(\widetilde{D}),
\] 
which gives us the isomorphism \eqref{eq:proj-A}, as claimed. It clearly restricts to the identity in $\Aut(D)$, since $\Aut(D)$ is linear.
\end{proof}

By Corollary \ref{corol4:euler}, the restriction $\pi: \Omega \to \widetilde{W}$ is an isomorphism equivariant w.r.t.\ the $\Aut(D)$-action by conjugation.
Denote its inverse by
\begin{eqnarray}
\label{eq:restr-pi-inverse}
q: \widetilde{W} &\to& \Omega
\end{eqnarray}
which is again $\Aut(D)$-equivariant.
The decomposition \eqref{eq:decompos-aut-2} then suggests the following extension of $q$ to $\Aut(\widetilde{D})$.
Let
\begin{eqnarray*}
A & \rtimes_\varphi & B \\
\alpha \downarrow \,& &  \,\downarrow \beta \\
A' & \rtimes_{\varphi'} & B'
\end{eqnarray*}
be a square of group homomorphisms
$\alpha: A \to A'$, $\beta: B \to B'$, and semidirect products with actions $\varphi: B \to \Aut(A)$, $\varphi': B' \to \Aut(A')$.  Consider the componentwise map
$$
\alpha \rtimes \beta: A \rtimes_\varphi B \to A' \rtimes_{\varphi'} B',
\qquad
ab \mapsto \alpha(a)\beta(b)
$$
$a \in A$, $b \in B$. If $\alpha$ is equivariant in the sense that it intertwines the $\varphi$ and $\varphi'$ actions, then $\alpha \rtimes \beta$ is a homomorphism with kernel $\Ker \alpha \rtimes \Ker \beta$ and image $\Ima \alpha \rtimes \Ima \beta$.

\begin{theo}
\label{theo:inversa-A} 
The inverse to
$\pi: \Aut(\A) \to \Aut(\widetilde{D})$ is given by 
$\theta = q \rtimes \id$, that is
\begin{eqnarray}
\label{eq:inversa-A} 
\theta: \Aut(\widetilde{D}) & \to & \Omega \rtimes \Aut(D) \\
\zeta \phi  & \mapsto & q(\zeta) \phi \nonumber
\end{eqnarray}
where $\zeta \in \widetilde{W}$ and $\phi \in \Aut(D)$,
It follows that $\theta$ is the identity on $\Aut(D)$ and $\pi$ is an isomorphism.
In particular, we get the semi-direct product decomposition
\begin{equation}
\label{eq:decompos-A} 
\Aut(\A) = \Omega \rtimes \Aut(D)
\end{equation}
\end{theo}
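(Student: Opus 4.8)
The plan is to assemble the statement from three ingredients already in place: the equivariant isomorphism $q = (\pi|_\Omega)^{-1}: \widetilde W \to \Omega$ of Corollary \ref{corol4:euler}, the inclusion $\Omega \rtimes \Aut(D) \leq \Aut(\A)$ of Lemma \ref{propos:aut-D-subgrp}, and the fact, from Proposition \ref{propos:AutA-projection}, that $\pi: \Aut(\A) \to \Aut(\widetilde D)$ is already a bijection. First I would invoke the general semidirect-product lemma stated just above the theorem with $A = \widetilde W$, $A' = \Omega$, $B = B' = \Aut(D)$, $\alpha = q$, $\beta = \id$, and $\varphi, \varphi'$ the respective conjugation actions: the equivariance of $q$ from Corollary \ref{corol4:euler} is exactly the hypothesis of that lemma, so $\theta = q \rtimes \id$ is a group homomorphism from $\widetilde W \rtimes \Aut(D) = \Aut(\widetilde D)$ (by \eqref{eq:decompos-aut-2}) into $\Omega \rtimes \Aut(D) \leq \Aut(\A)$ (by Lemma \ref{propos:aut-D-subgrp}), with kernel $\Ker q \rtimes \Ker \id = \{1\}$ and image $\Ima q \rtimes \Ima \id = \Omega \rtimes \Aut(D)$.

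Next I would verify that $\theta$ is a one-sided inverse of $\pi$. Writing an element of $\Aut(\widetilde D)$ uniquely as $\zeta\phi$ with $\zeta \in \widetilde W$ and $\phi \in \Aut(D)$ (by \eqref{eq:decompos-aut-2}), I compute $\pi(\theta(\zeta\phi)) = \pi(q(\zeta)\phi) = \pi(q(\zeta))\,\pi(\phi) = \zeta\phi$, since $\pi \circ q = \id$ on $\widetilde W$ by Corollary \ref{corol4:euler} and $\pi$ fixes $\Aut(D) \leq \O(V)$ pointwise, being the identity on linear maps (Proposition \ref{propos:AutA-projection}). Hence $\pi \circ \theta = \id_{\Aut(\widetilde D)}$, and in particular $\theta$ restricts to the identity on $\Aut(D)$.

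Finally I would conclude. Since $\pi: \Aut(\A) \to \Aut(\widetilde D)$ is a bijection by Proposition \ref{propos:AutA-projection}, and a right inverse of a bijection is its unique two-sided inverse, the relation $\pi \circ \theta = \id$ forces $\theta = \pi^{-1}$; in particular $\theta$ is onto $\Aut(\A)$. As $\Ima\theta = \Omega \rtimes \Aut(D)$, this yields $\Aut(\A) = \Omega \rtimes \Aut(D)$, i.e.\ the decomposition \eqref{eq:decompos-A}, and simultaneously re-establishes that $\pi$ is an isomorphism with $\pi^{-1} = \theta$.

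I do not expect a genuine obstacle here: the substantive content lies in the cited results. The one point deserving care is the bookkeeping around the conjugation actions — one must check, using the equivariance \eqref{eq:equivariance} of $\pi$, that the $\Aut(D)$-action on $\widetilde W$ appearing in \eqref{eq:decompos-aut-2} corresponds under $\pi$ to the $\Aut(D)$-action on $\Omega$ inside $\Aut(\A)$, so that the equivariance hypothesis of the general lemma holds on the nose. I would also note that, while the lemma's kernel/image formulas are what pin down $\Ima\theta$ exactly, the weaker inclusion $\Ima\theta \subseteq \Omega\cdot\Aut(D)$, visible directly from $\theta(\zeta\phi) = q(\zeta)\phi$, already suffices, combined with surjectivity of $\theta$, to force $\Aut(\A) = \Omega \rtimes \Aut(D)$.
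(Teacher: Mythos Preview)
Your proposal is correct and follows essentially the same approach as the paper: invoke the general $\alpha \rtimes \beta$ lemma with $\alpha = q$ and $\beta = \id$ (using the $\Aut(D)$-equivariance of $q$ from Corollary~\ref{corol4:euler}) to make $\theta$ a homomorphism, verify $\pi \circ \theta = \id$ by the same computation, and then use Proposition~\ref{propos:AutA-projection} to conclude that $\theta = \pi^{-1}$ and hence $\Aut(\A) = \Omega \rtimes \Aut(D)$. Your additional remarks about the bookkeeping of the conjugation actions and the weaker image inclusion are accurate but not needed beyond what the paper already assumes.
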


\begin{proof}
By the $\Aut(D)$-equivariance of $q$ of Corollary \ref{corol4:euler} and by the comments preceding the theorem, it follows that $\theta = q \rtimes \id$ is a homomorphism.
Since $q$ is an isomorphism, $\theta$ is also an isomorphism.
We claim that $\pi \circ \theta = \id$. Indeed, we have that
$$
\pi(\theta(\zeta\phi)) = \pi(q(\zeta)\phi) = \pi(q(\zeta))\pi(\phi) = \zeta \phi
$$
since $q$ is the inverse of $\pi$ restricted to $\Omega$, and $\pi$ restricted to $\Aut(D)$ is the identity.  It follows that $\pi$ is surjective with right inverse $\theta$.
By Proposition \ref{propos:AutA-projection}, $\pi$ was already injective, so that $\pi$ is isomorphism with inverse $\theta$. Decomposition \eqref{eq:decompos-A} then follows. 
\end{proof}


\begin{rem} Theorem~\ref{theo:inversa-A}  implies 
  that $\Aut(\widetilde{D})$ is a quotient of $\Aut(\A)$, while
$\Aut(D)$ is a subgroup of $\Aut(\A)$
(the subgroup that fixes the origin). 
\end{rem}

In order to obtain explicit expressions for the inverse isomorphisms $q$ and $\theta$, denote $v_0 := 1 \in \widetilde{W}$, $\omega_0 := 1 \in \Omega$ and $\varpi^\vee_0 := 0 \in V$. Then an explicit expression for $q$ is $q(v_j) = \omega_j$, $j \in J \cup \{0\}$ and we have the following.

\begin{prop}
\label{prop:inversa-A-explicit} 
An explicit expression for the inverse isomorphism $\theta$ is
\begin{eqnarray}
\label{eq:inversa-A-explicit} 
\theta: \Aut(\widetilde{D}) & \to & \Aut(\A)  \nonumber \\
\phi & \mapsto & t_{\varpi^\vee_j} \phi 
\end{eqnarray}
where $j$ is such that $\phi(-\alpha_0) = \alpha_j$,
and $j=0$ if $\phi(-\alpha_0) = -\alpha_0$.
\end{prop}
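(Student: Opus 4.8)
The plan is to show that the map $\theta$ defined in Proposition~\ref{prop:inversa-A-explicit} coincides with the inverse isomorphism $\theta = q \rtimes \id$ of Theorem~\ref{theo:inversa-A}, using the semidirect-product decomposition $\Aut(\widetilde{D}) = \widetilde{W} \rtimes \Aut(D)$ from~\eqref{eq:decompos-aut-2}. The key point is simply to unwind the general formula $\theta(\zeta\phi) = q(\zeta)\phi$ in terms of the explicit descriptions of $\widetilde{W}$ and $\Omega$ obtained in Section~2.

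First I would take an arbitrary $\phi \in \Aut(\widetilde{D})$ and write its unique decomposition $\phi = v_j \psi$ with $v_j \in \widetilde{W}$ and $\psi \in \Aut(D)$, where, following the convention $v_0 = 1$, the index $j \in J \cup \{0\}$ is characterized by $v_j(-\alpha_0) = \alpha_j$ (Corollary~\ref{corol1:euler}) and the convention $\alpha_0 := -\alpha_0$ is understood so that $v_0(-\alpha_0) = -\alpha_0$. Since $\psi \in \Aut(D)$ fixes $-\alpha_0$ (it stabilizes $\Pi$, hence fixes $\alpha_0$), we get $\phi(-\alpha_0) = v_j\psi(-\alpha_0) = v_j(-\alpha_0) = \alpha_j$; this shows that the index $j$ in the statement is well-defined and agrees with the index coming from the $\widetilde{W}$-component of $\phi$, and in particular $j=0$ exactly when $\phi(-\alpha_0) = -\alpha_0$, i.e.\ when $\phi \in \Aut(D)$.

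Next I would compute $\theta(\phi)$ from the formula in Theorem~\ref{theo:inversa-A}: $\theta(\phi) = \theta(v_j\psi) = q(v_j)\psi = \omega_j\psi$, using that $q$ is the inverse of $\pi|_\Omega$ and $q(v_j) = \omega_j = t_{\varpi^\vee_j}v_j$ by Corollary~\ref{propos:omega-elements}. Now I expand the semidirect-product element $\omega_j\psi = t_{\varpi^\vee_j} v_j \psi = t_{\varpi^\vee_j}\phi$, where the last equality uses $\phi = v_j\psi$ back again. This is exactly the claimed expression $\theta(\phi) = t_{\varpi^\vee_j}\phi$. When $j = 0$ we have $\varpi^\vee_0 = 0$ and $t_0 = \id$, so $\theta(\phi) = \phi$, consistent with $\theta$ restricting to the identity on $\Aut(D)$.

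The only genuine subtlety — and the step I would be most careful about — is checking that the decomposition $\phi = v_j\psi$ really is the one matching the formula $\theta(\zeta\phi)=q(\zeta)\phi$, i.e.\ that the $\widetilde{W}$-part of $\phi$ is precisely $v_j$ with $j$ determined by $\phi(-\alpha_0)=\alpha_j$. This follows because $\phi v_j^{-1}$ fixes $-\alpha_0$ (as computed above $\phi(-\alpha_0)=\alpha_j=v_j(-\alpha_0)$, so $v_j^{-1}\phi$... one must be slightly careful about left versus right cosets here), hence lies in the stabilizer of $-\alpha_0$ in $\Aut(\widetilde{D})$, which by Corollary~\ref{corol3:euler} and~\eqref{eq:decompos-aut-2} is exactly $\Aut(D)$; thus $\phi \in \widetilde{W}\cdot\Aut(D)$ with $\widetilde{W}$-component $v_j$. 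Everything else is a direct substitution, so there is no real obstacle beyond bookkeeping the conventions $v_0 = \omega_0 = \id$, $\varpi^\vee_0 = 0$.
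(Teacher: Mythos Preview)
Your proof is correct and follows essentially the same approach as the paper: decompose $\phi = v_j\psi$ with $\psi \in \Aut(D)$, identify $j$ via $\phi(-\alpha_0)=\alpha_j$, then apply $\theta = q\rtimes\id$ and use $\omega_j = t_{\varpi^\vee_j}v_j$ to recombine. The only difference is cosmetic: the paper handles the coset issue cleanly in one line by observing directly that $v_j^{-1}\phi(-\alpha_0)=-\alpha_0$ (since both $\phi$ and $v_j$ send $-\alpha_0$ to $\alpha_j$), whereas your final paragraph wavers between $\phi v_j^{-1}$ and $v_j^{-1}\phi$ --- the former does \emph{not} fix $-\alpha_0$ in general, so your instinct to correct to $v_j^{-1}\phi$ is right and you should simply commit to it.
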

\begin{proof}
If $\phi(-\alpha_0) = -\alpha_0$ then $\phi \in \Aut(D)$ so that $\theta(\phi) = \phi = t_{\varpi^\vee_0} \phi$, since $\theta$ is the identity of $\Aut(D)$. Otherwise,  $\phi(-\alpha_0) = \alpha_j$ and by Corollary \ref{corol1:euler} we have that $v_j(-\alpha_0) = \alpha_j = \phi(-\alpha_0)$. Thus $v_j^{-1} \phi(-\alpha_0) = -\alpha_0$, so that $v_j^{-1} \phi \in \Aut(D)$. Then
$$
\theta(\phi) = \theta(v_j v_j^{-1} \phi) = \theta(v_j) \theta(v_j^{-1} \phi) =
\omega_j v_j^{-1} \phi =  
t_{\varpi^\vee_j} \phi
$$
since $\omega_j = t_{\varpi^\vee_j} v_j$, which proves our claim.
\end{proof}

Since by inspection of Table \ref{table:thm1},
$\Aut(\widetilde{D})$ is an abstract Coxeter group with generators given in the third and fourth columns of the table, 
it follows from Theorem \ref{theo:inversa-A} that $\Aut(\A)$ is abstract Coxeter and we can use Proposition \ref{prop:inversa-A-explicit}  to obtain explicit generators given by affine involutions in $V$.

Alternatively, {we can write generators of $\widetilde{W}$ given in Table \ref{table:omega} as products of Coxeter generators of $\Aut(\widetilde{D})$ given in Table \ref{table:thm1}, obtaining the following table.

\begin{figure}[h!]
\includegraphics[width=15.5cm]{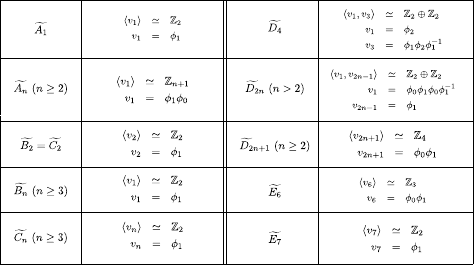}
\captionof{table}{Generators of $\widetilde{W}$ as product of generators of $\Aut(\widetilde{D})$}
\label{table:thm2}
\end{figure}

\noindent Then use this to decompose the generators of $\Aut(\widetilde{D})$ according to $\widetilde{W} \rtimes \mathrm{Aut}(D)$ and use the inverse \eqref{eq:inversa-A} to write them according to $\Omega \rtimes \Aut(D)$ in $\Aut(\A)$.}

For example, for type $\widetilde{A_n}$ $(n\geq2)$, $\Aut(\widetilde{D})$ is abstract Coxeter with generators $\phi_0, \phi_1$, where $\phi_0 \in \Aut(D)$. Their image in $\Aut(\A)$ are  
$$\tau_0 := \theta(\phi_0) = \phi_0$$
$$
\tau_1 := \theta(\phi_1) = t_{\varpi^\vee_1} \phi_1 = t_{\varpi^\vee_1} v_1 v_1^{-1} \phi_1 = 
\omega_1 \phi_0
$$
since, according to Table \ref{table:thm1}, we have $\phi_1(-\alpha_0) = \alpha_1$ and $v_1 = \phi_1 \phi_0$, so that $v_1^{-1} = \phi_0 \phi_1$. Alternatively, one can use directly Table \ref{table:thm2} to get $v_1 = \phi_1 \phi_0$, so that $\phi_1 = v_1 \phi_0$, which maps to $\tau_1 = \omega_1 \phi_0$. 
Proceeding in the same manner for the other types we get the following.

\begin{prop}
$\Aut(\A)$ is abstract Coxeter, generated by the affine involutions given in the last column of Table \ref{table:thm1}.
\end{prop}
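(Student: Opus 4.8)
The plan is to reduce the statement to what has already been established: by Theorem~\ref{theo:inversa-A} the projection $\pi\colon\Aut(\A)\to\Aut(\widetilde D)$ is an isomorphism with explicit inverse $\theta$ given in Proposition~\ref{prop:inversa-A-explicit}. So I would first invoke the fact---obtained by inspection of the classification in Table~\ref{table:thm1}---that $\Aut(\widetilde D)$ is an abstract Coxeter group with a distinguished set of Coxeter generators, namely the diagram involutions listed in the third/fourth columns of the table. Since $\theta$ is a group isomorphism, it carries this Coxeter presentation of $\Aut(\widetilde D)$ to a Coxeter presentation of $\Aut(\A)$: the images $\theta(s)$ of the generators $s$ are a set of involutions (images of involutions under an isomorphism) satisfying exactly the same braid/order relations, hence $\Aut(\A)=\langle\,\theta(s)\,\rangle$ is abstract Coxeter.

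Next I would identify these images concretely. Each Coxeter generator $s$ of $\Aut(\widetilde D)$ is either an element of $\Aut(D)$ (fixing $-\alpha_0$), in which case $\theta(s)=s$ is a linear involution, or it moves $-\alpha_0$ to some minuscule $\alpha_j$, in which case Proposition~\ref{prop:inversa-A-explicit} gives $\theta(s)=t_{\varpi^\vee_j}\,s$, an affine involution of $V$. (It is automatically an involution because $\theta$ is an isomorphism and $s^2=1$; one can also see it directly, since $s$ swaps $-\alpha_0\leftrightarrow\alpha_j$ and $\theta(s)$ is the composite of $s$ with the translation by the corresponding minuscule coweight.) Running through the types $\widetilde A_n$, $\widetilde D_n$, $\widetilde E_6$, etc., exactly as in the worked $\widetilde A_n$ example preceding the statement---using Table~\ref{table:thm2} to express the relevant $v_j\in\widetilde W$ as products of Coxeter generators, and then \eqref{eq:inversa-A} to pass from the $\widetilde W\rtimes\Aut(D)$ description to the $\Omega\rtimes\Aut(D)$ description---produces precisely the affine involutions recorded in the last column of Table~\ref{table:thm1}. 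This is the only computational content, and it is type-by-type bookkeeping rather than anything conceptual.

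The main (and essentially sole) obstacle is the verification that $\Aut(\widetilde D)$ is abstract Coxeter with the asserted generators: this is not proved in the text but asserted ``by inspection of the classification.'' In the write-up I would simply cite Table~\ref{table:thm1} and the standard classification of affine Dynkin diagrams for this; the diagram automorphism groups that occur are $1$, $\Z/2$, $\Z/3$, $\Z/4$, $\Sym_3$, $\Sym_4$, and $\Z/2\times\Z/2$ (the last only for $\widetilde D_4$, or rather $\Sym_4$ there; $\Z/2\times\Z/2$ for $\widetilde D_n$, $n\ge5$), and in each case the listed involutions are manifestly a Coxeter generating set. Given that, the proof is: apply the isomorphism $\theta$ of Theorem~\ref{theo:inversa-A}, transport the presentation, and read off the generators via Proposition~\ref{prop:inversa-A-explicit} and Tables~\ref{table:omega} and~\ref{table:thm2}.
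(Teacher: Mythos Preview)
Your approach is essentially identical to the paper's: transport the Coxeter presentation of $\Aut(\widetilde D)$ through the isomorphism $\theta$ of Theorem~\ref{theo:inversa-A}, then do the type-by-type bookkeeping via Proposition~\ref{prop:inversa-A-explicit} and Tables~\ref{table:omega}/\ref{table:thm2}. One slip to fix in your parenthetical: the groups $\Z/3$ and $\Z/4$ do not occur (and are not Coxeter), and for $\widetilde D_n$ with $n\ge5$ the automorphism group is dihedral of order $8$, not $\Z/2\times\Z/2$; the actual list is $1$, $\Z/2$, the dihedral groups $I_2(m)$, and $\Sym_4$.
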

\begin{proof}
Using Table \ref{table:thm2}, we have:
\begin{itemize}
\item For type $\widetilde{A_1}$, $\Aut(\widetilde{D})$ is abstract Coxeter with generator $\phi_1 = v_1$, so that 
$\tau_1 := \theta(\phi_1) = \omega_1$.

\item For type $\widetilde{B_2}$, $\Aut(\widetilde{D})$ is abstract Coxeter with generator $\phi_1= v_2$, so that 
$\tau_1 := \theta(\phi_1) = \omega_2$.

\item For type $\widetilde{B_n}$ $(n\geq3)$, $\Aut(\widetilde{D})$ is abstract Coxeter with generator $\phi_1= v_1$, so that $\tau_1 := \theta(\phi_1) = \omega_1$.

\item For type $\widetilde{C_n}$ $(n\geq3)$, $\Aut(\widetilde{D})$ is abstract Coxeter with generator $\phi_1= v_n$, so that $\tau_1 := \theta(\phi_1) = \omega_n$.

\item For type $\widetilde{D_4}$, $\Aut(\widetilde{D})$ is abstract Coxeter with generators
$\phi_0, \phi_1 \in \Aut(D)$ and $\phi_2 = v_1$, so that $\tau_0 := \theta(\phi_0) = \phi_0$, $\tau_1 := \theta(\phi_1) = \phi_1$, $\tau_2 := \theta(\phi_2) = \omega_1$.

\item For type $\widetilde{D_{\phantom{2}}}_{\!\!\!2n}$ $(n > 2)$, $\Aut(\widetilde{D})$  is abstract Coxeter with generators $\phi_0 \in \Aut(D)$ and $\phi_1 = v_{2n-1}$, so that $\tau_0 := \theta(\phi_0) = \phi_0$, $\tau_1 := \theta(\phi_1) = \omega_{2n-1}$.

\item For type $\widetilde{D_{\phantom{2}}}_{\!\!\!2n+1}$ $(n\geq 2)$, $\Aut(\widetilde{D})$  is abstract Coxeter with generators $\phi_0 \in \Aut(D)$ and $\phi_1 = \phi_0 v_{2n+1}$, so that $\tau_0 := \theta(\phi_0) = \phi_0$, $\tau_1 := \theta(\phi_1) = \phi_0\omega_{2n+1}$.

\item For type $\widetilde{E_6}$, $\Aut(\widetilde{D})$  is abstract Coxeter with generators $\phi_0 \in \Aut(D)$ and $\phi_1 = \phi_0 v_6$, so that $\tau_0 := \theta(\phi_0) = \phi_0$, $\tau_1 := \theta(\phi_1) = \phi_0\omega_6$.

\item For type $\widetilde{E_7}$, $\Aut(\widetilde{D})$  is abstract Coxeter with generator $\phi_1 = v_7$, so that $\tau_1 := \theta(\phi_1) = \omega_7$.
\end{itemize}
\end{proof}

Collecting the results of this section we get Theorem A of the introduction.

\section{Fundamental polytope}

We start with a general result on the bounding hyperplanes 
of the Komrakov--Premet polytope $\mathcal{K}$ \eqref{eq:kp-domain}, i.e.\ the hyperplanes $H\le V$ such that $\mathcal{K}$ entirely lies in exactly one of the closed half-spaces $H^+,H^-$ and for which $\codim(\mathcal{K}\cap H)=1$.
For this, we use that $\mathcal{K}$  has vertices (see 
Proposition 2.2.3 of \cite{Ga24})
\begin{eqnarray}
\label{eq:kp-domain-vert}
\vertices(\mathcal{K})
= \left\{\frac{\varpi^\vee_i}{m_i}\right\}_{i\in I\setminus J}
\cup\bigg\{\frac{1}{|L|+1}\sum_{j\in L}\varpi^\vee_j\bigg\}_{L\subseteq J}
\end{eqnarray}
where $J = \{ j \in I: \, m_j = 1 \}$ and $L = \emptyset$ on the right hand side furnishes the vertex $0$.
Note that
\begin{equation}
\label{eq:kp-domain-vert-alt}
\left( \alpha_0, \,\frac{\varpi^\vee_i}{m_i} \right) = 1 \quad\text{while}\quad
\left( \alpha_0, \,\frac{1}{|L|+1}\sum_{j\in L}\varpi^\vee_j \right) = \frac{|L|}{|L|+1}
< 1
\end{equation}
Consider the hyperplanes
\begin{align*}
&H_i:=\{x \in V~|~(\alpha_i, x)=0\} \quad (i \in I),
\qquad H_0:=\{x \in V~|~(\alpha_0, x)=1\},\\
&H_j^0:=\{x \in V~|~(\alpha_0+\alpha_j, x)=1\}\quad (j \in J)
\end{align*}
illustrated below for type $A_2$.

\begin{center}
\def\svgwidth{10cm}
\begingroup%
  \makeatletter%
  \providecommand\color[2][]{%
    \errmessage{(Inkscape) Color is used for the text in Inkscape, but the package 'color.sty' is not loaded}%
    \renewcommand\color[2][]{}%
  }%
  \providecommand\transparent[1]{%
    \errmessage{(Inkscape) Transparency is used (non-zero) for the text in Inkscape, but the package 'transparent.sty' is not loaded}%
    \renewcommand\transparent[1]{}%
  }%
  \providecommand\rotatebox[2]{#2}%
  \newcommand*\fsize{\dimexpr\f@size pt\relax}%
  \newcommand*\lineheight[1]{\fontsize{\fsize}{#1\fsize}\selectfont}%
  \ifx\svgwidth\undefined%
    \setlength{\unitlength}{91.20580335bp}%
    \ifx\svgscale\undefined%
      \relax%
    \else%
      \setlength{\unitlength}{\unitlength * \real{\svgscale}}%
    \fi%
  \else%
    \setlength{\unitlength}{\svgwidth}%
  \fi%
  \global\let\svgwidth\undefined%
  \global\let\svgscale\undefined%
  \makeatother%
  \begin{picture}(1,0.51434613)%
    \lineheight{1}%
    \setlength\tabcolsep{0pt}%
    \put(0,0){\includegraphics[width=\unitlength,page=1]{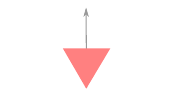}}%
    \put(0.83863704,0.27319011){\makebox(0,0)[lt]{\lineheight{1.25}\smash{\begin{tabular}[t]{l}$\alpha_1$\end{tabular}}}}%
    \put(0.44556509,0.48863277){\color[rgb]{0.6,0.6,0.6}\makebox(0,0)[lt]{\lineheight{1.25}\smash{\begin{tabular}[t]{l}$\alpha_0$\end{tabular}}}}%
    \put(0,0){\includegraphics[width=\unitlength,page=2]{A2-vertex-top-face.pdf}}%
    \put(0.17778184,0.27569064){\color[rgb]{0.83137255,0,0}\makebox(0,0)[lt]{\lineheight{1.25}\smash{\begin{tabular}[t]{l}$H_0$\end{tabular}}}}%
    \put(0.2789495,0.37525751){\color[rgb]{0.83137255,0,0}\makebox(0,0)[lt]{\lineheight{1.25}\smash{\begin{tabular}[t]{l}$H_1$\end{tabular}}}}%
    \put(0,0){\includegraphics[width=\unitlength,page=3]{A2-vertex-top-face.pdf}}%
    \put(0.02951428,0.27319011){\makebox(0,0)[lt]{\lineheight{1.25}\smash{\begin{tabular}[t]{l}$\alpha_2$\end{tabular}}}}%
    \put(0.2764601,0.06441855){\color[rgb]{0,0,1}\makebox(0,0)[lt]{\lineheight{1.25}\smash{\begin{tabular}[t]{l}$H^0_2$\end{tabular}}}}%
    \put(0.60538638,0.06441855){\color[rgb]{0,0,1}\makebox(0,0)[lt]{\lineheight{1.25}\smash{\begin{tabular}[t]{l}$H^0_1$\end{tabular}}}}%
    \put(0.58926063,0.37525751){\color[rgb]{0.83137255,0,0}\makebox(0,0)[lt]{\lineheight{1.25}\smash{\begin{tabular}[t]{l}$H_2$\end{tabular}}}}%
    \put(0.46274222,0.13007965){\makebox(0,0)[lt]{\lineheight{1.25}\smash{\begin{tabular}[t]{l}$\mathcal{L}$\end{tabular}}}}%
    \put(0,0){\includegraphics[width=\unitlength,page=4]{A2-vertex-top-face.pdf}}%
    \put(0.43849775,0.21568542){\makebox(0,0)[lt]{\lineheight{1.25}\smash{\begin{tabular}[t]{l}$\mathcal{A}$\end{tabular}}}}%
    \put(0.41205471,0.13007965){\color[rgb]{1,1,1}\makebox(0,0)[lt]{\lineheight{1.25}\smash{\begin{tabular}[t]{l}$\mathcal{K}$\end{tabular}}}}%
    \put(0.46841933,0.30858332){\color[rgb]{0,0,0}\makebox(0,0)[lt]{\lineheight{1.25}\smash{\begin{tabular}[t]{l}$H_0^0$\end{tabular}}}}%
  \end{picture}%
\endgroup%

\end{center}

%
%

\begin{lem}\label{lem:boundings_F}
If $\Omega\ne \{1\}$, the bounding hyperplanes for $\mathcal{K}$ are precisely the $H_i$'s, for $i\in I$, together with the $H_j^0$'s for $j\in J$. In particular, $H_0$ does not bound $\K$.
\end{lem}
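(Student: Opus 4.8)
The plan is to read off the inequality presentation of $\mathcal{K}$ and then compare it against the explicit vertex list \eqref{eq:kp-domain-vert}. Unwinding \eqref{eq:kp-domain} and the definition of $\A$, the polytope $\mathcal{K}$ is exactly the set of $x$ satisfying $(\alpha_i,x)\ge 0$ for $i\in I$, $(\alpha_0,x)\le 1$, and $(\alpha_0+\alpha_j,x)\le 1$ for $j\in J$, with boundary hyperplanes $H_i$, $H_0$ and $H_j^0$ respectively. From \eqref{eq:kp-domain-vert} the vertices $0$, $\varpi^\vee_i/m_i$ $(i\in I\setminus J)$ and $\tfrac12\varpi^\vee_j$ $(j\in J)$ are affinely independent, because $\{\varpi^\vee_i\}_{i\in I}$ is a basis of $V$; hence $\mathcal{K}$ is $n$-dimensional, and every bounding hyperplane of $\mathcal{K}$ is the affine hull of a facet, hence one of $H_i$ $(i\in I)$, $H_0$, $H_j^0$ $(j\in J)$. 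It then remains to decide, for each of these, whether $\mathcal{K}\cap H$ has dimension $n-1$; since $\mathcal{K}$ lies in one of the closed half-spaces bounded by each such $H$, the intersection $\mathcal{K}\cap H$ is a face of $\mathcal{K}$, equal to the convex hull of the vertices of $\mathcal{K}$ it contains, which I would list by pairing the defining functional against \eqref{eq:kp-domain-vert}, using $(\alpha_i,\varpi^\vee_k)=\delta_{ik}$ and \eqref{eq:kp-domain-vert-alt}.

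\textbf{Key steps.} The case of $H_0$ is immediate: by \eqref{eq:kp-domain-vert-alt} the only vertices of $\mathcal{K}$ with $(\alpha_0,x)=1$ are the $\varpi^\vee_i/m_i$ with $i\in I\setminus J$, which number $n-|J|$; since $\Omega\ne\{1\}$ we have $J\ne\emptyset$ by Corollary \ref{propos:omega-elements}, so $\mathcal{K}\cap H_0$ has at most $n-1$ vertices and hence $\dim(\mathcal{K}\cap H_0)\le n-2<n-1$, i.e.\ $H_0$ does not bound $\mathcal{K}$. For the remaining hyperplanes the goal is to exhibit $n$ affinely independent vertices of $\mathcal{K}$ on each. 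The pairings show that $H_i$ carries the vertices $\varpi^\vee_k/m_k$ with $k\ne i$ and $\tfrac1{|L|+1}\sum_{l\in L}\varpi^\vee_l$ with $i\notin L$, while $H_j^0$ carries $\varpi^\vee_k/m_k$ for all $k\in I\setminus J$ and $\tfrac1{|L|+1}\sum_{l\in L}\varpi^\vee_l$ for all $L\ni j$. One then picks $n$ of them of the form ``$0$, or $\tfrac12\varpi^\vee_j$, or $\tfrac13(\varpi^\vee_j+\varpi^\vee_l)$, or $\varpi^\vee_k/m_k$'': concretely, for $H_i$ take $\{\varpi^\vee_k/m_k\}_{k\in (I\setminus J)\setminus\{i\}}\cup\{0\}\cup\{\tfrac12\varpi^\vee_j\}_{j\in J\setminus\{i\}}$, and for $H_j^0$ take $\{\varpi^\vee_k/m_k\}_{k\in I\setminus J}\cup\{\tfrac12\varpi^\vee_j\}\cup\{\tfrac13(\varpi^\vee_j+\varpi^\vee_l)\}_{l\in J\setminus\{j\}}$. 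After translating by one of the chosen vertices, the $n-1$ difference vectors have pairwise distinct ``leading'' coweights $\varpi^\vee_r$ with $r$ ranging over a subset of $I$ of size $n-1$; discarding the one remaining coordinate turns them into nonzero multiples of distinct basis vectors, so they are linearly independent. This yields $\dim(\mathcal{K}\cap H_i)=\dim(\mathcal{K}\cap H_j^0)=n-1$, so these hyperplanes all bound $\mathcal{K}$; combined with the previous step, this establishes both inclusions of the asserted equality.

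\textbf{Main obstacle.} I expect the only real work to be the bookkeeping in the last step: sorting out which vertices of $\mathcal{K}$ lie on $H_i$ and on $H_j^0$ (a small case split according to whether the index of the hyperplane lies in $J$), and packaging the affine-independence verification uniformly. Nothing here looks like a genuine obstacle — everything reduces to the two displayed vertex formulas \eqref{eq:kp-domain-vert} and \eqref{eq:kp-domain-vert-alt}, the identity $(\alpha_i,\varpi^\vee_k)=\delta_{ik}$, and the observation that $J\ne\emptyset$ whenever $\Omega\ne\{1\}$.
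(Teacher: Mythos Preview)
Your proposal is correct and follows essentially the same approach as the paper: write $\mathcal{K}$ as an intersection of half-spaces, then for each candidate hyperplane list the vertices of $\mathcal{K}$ it contains using \eqref{eq:kp-domain-vert} and \eqref{eq:kp-domain-vert-alt}. Your treatment is in fact slightly more careful than the paper's, which merely counts vertices on each $H_i$ and $H_j^0$ without explicitly verifying their affine independence; your choice of $n$ vertices with distinct ``leading'' coweights and the projection argument makes this step clean.
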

\begin{proof}
From \eqref{eq:kp-domain} it follows that $\mathcal{K}=H_0^-\cap\left(\bigcap_{i\in I}H_i^+\right)\cap\left(\bigcap_{j\in J}(H_j^0)^-\right)$. For $i\in I$, the vertices $0$ and $\varpi^\vee_k/m_k$ ($i\ne k\in I\setminus J$) and $\varpi^\vee_k/2$ ($i\ne k\in J$) belong to $H_i$, so that $\dim(\mathcal{K}\cap H_i)=n-1$.
On the other hand, for $j\in J\ne\emptyset$, the vertices $\varpi^\vee_k/m_k$ ($k\in I\setminus J$) and $\frac{1}{|J'|+1}\sum_{l\in J'}\varpi^\vee_l$ ($j\in J'\subseteq J$) belong to $H_j^0$, and there are $n-|J|+2^{|J|-1}\ge n$ of them, so that again $\dim(F\cap H_j^0)=n-1$. Finally, the only vertices of $\mathcal{K}$ belonging to $H_0$ are the $\varpi^\vee_k/m_k$ for $k\in I\setminus J$, which are in number $|I\setminus J|\le n-1$ and thus $\dim(\mathcal{K}\cap H_0)<n-1$.
\end{proof}

We now consider the cases with $\Aut(D) \neq 1$. Let $\phi_0 \in \Aut(D)$ be a non-trivial involution. 

\begin{prop}
\label{prop:u0-slice}
If $0 \not=v_0 \in V$ satisfies $\phi_0(v) = -v_0$, then slicing $\mathcal{K}$ with
\begin{equation}
H_0^0:=\{x \in V~|~(v_0, x)=0\},
\end{equation}
we get the fundamental polytope
\begin{equation}
\mathcal{L}:=\{x \in \mathcal{K}~|~(v_0, x)\ge0\}
\end{equation}
for $\left<\phi_0\right>$ acting on $\mathcal{K}$.
\end{prop}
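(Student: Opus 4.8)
The plan is to show that $\mathcal{L} = \mathcal{K} \cap \{v_0 \geq 0\}$ is a fundamental domain for the two-element group $\langle \phi_0 \rangle$ acting on $\mathcal{K}$; since $\phi_0$ is a linear involution fixing $\mathcal{K}$ (as $\Aut(D)$ leaves $\K$ invariant, noted after \eqref{eq:kp-domain}), the only nontrivial element acts as the reflection sending $v_0 \mapsto -v_0$. First I would verify the covering property: given $x \in \mathcal{K}$, if $(v_0,x) \geq 0$ then $x \in \mathcal{L}$ already; if $(v_0,x) < 0$, then $\phi_0(x) \in \mathcal{K}$ (invariance) and $(v_0, \phi_0(x)) = (\phi_0(v_0), x) = (-v_0, x) = -(v_0,x) > 0$, so $\phi_0(x) \in \mathcal{L}$. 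Hence $\langle\phi_0\rangle \mathcal{L} = \mathcal{K}$, and a fortiori every $\Omega\rtimes\langle\phi_0\rangle$-orbit meets $\mathcal{L}$ because $\K$ is already a fundamental domain for $\Omega$ on $\A$.

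Next I would check the interior-disjointness condition: $\phi_0(\mathcal{L}) \cap \mathcal{L}$ has empty interior. Indeed $\phi_0(\mathcal{L}) = \phi_0(\mathcal{K}) \cap \phi_0(\{v_0 \geq 0\}) = \mathcal{K} \cap \{v_0 \leq 0\}$, so $\phi_0(\mathcal{L}) \cap \mathcal{L} = \mathcal{K} \cap \{v_0 = 0\} = \mathcal{K} \cap H_0^0$, which lies in a hyperplane and therefore has empty interior in $V$. This already establishes that $\mathcal{L}$ is a fundamental domain for $\langle\phi_0\rangle$ on $\mathcal{K}$ in the sense defined in the introduction. To conclude that $\mathcal{L}$ is a \emph{polytope} (and ultimately, combined with the $\Omega$-statement, a fundamental \emph{polytope}), I would note that $\mathcal{L}$ is the intersection of the compact polytope $\mathcal{K}$ with the half-space $H_0^{0,+}$, hence is itself a compact convex polytope; connectedness is automatic from convexity.

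The one genuinely delicate point — and the part I expect to be the main obstacle — is not the fundamental-domain property itself but showing that $\mathcal{L}$ is \emph{closed and connected} with nonempty interior, i.e.\ that the slice is nondegenerate, and ultimately (for the full Theorem~B statement) that $H_0^0$ passes through vertices of $\mathcal{K}$ so that $\vertices(\mathcal{L}) \subseteq \vertices(\mathcal{K})$. For the present proposition (Proposition~\ref{prop:u0-slice}) only the fundamental-domain conclusion is asserted, so it suffices to observe that $\mathcal{K}$ has nonempty interior (it is a fundamental domain for the finite group $\Omega$ on the top-dimensional $\A$, by Lemma~\ref{lem:boundings_F} and \eqref{eq:kp-domain-vert}) and that $\{v_0 = 0\}$ cannot contain all of $\mathcal{K}$, since $\phi_0$ acts nontrivially and would otherwise fix $\mathcal{K}$ pointwise, contradicting that $\Aut(D)$ acts faithfully on $\Pi$. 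Thus $\mathcal{L}$ has nonempty interior, is closed as an intersection of closed sets, and is connected by convexity, completing the proof. The sharper vertex statement, needed only for Theorem~B, is where one must invoke the explicit description \eqref{eq:kp-domain-vert} together with a judicious choice of $v_0$ (the balanced minuscule root), and I would defer that to the proof of Theorem~B proper.
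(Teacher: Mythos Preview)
Your proof is correct and follows essentially the same approach as the paper: establish the covering property via the identity $(v_0,\phi_0(x)) = -(v_0,x)$, show interior-disjointness because $\mathcal{L}\cap\phi_0(\mathcal{L})$ lies in the hyperplane $\{v_0=0\}$, and observe that $\mathcal{L}$ is a polytope as the intersection of $\mathcal{K}$ with a closed half-space. One minor quibble: your phrase ``the only nontrivial element acts as the reflection sending $v_0\mapsto -v_0$'' is misleading, since $\phi_0$ is in general \emph{not} a hyperplane reflection (indeed the paper emphasizes this for $A_n$, $n\geq 4$); you only use $\phi_0(v_0)=-v_0$, so the argument is unaffected, but the wording should be tightened.
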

\begin{proof}
Since $\phi_0$ is an isometric involution such that $\phi_0(v_0)=-v_0$, we have
$$
(\phi_0(x), v_0) = (x, \phi_0(v_0))=-(x, v_0)
$$
It follows that, if $x\in \mathcal{K}$ does not belong to $\mathcal{L}$, then $\phi_0(x)$ does. Also, the intersection $\mathcal{L}\cap \phi_0(\mathcal{L})$ is included in the hyperplane $\{v_0 = 0\}$, and so has empty interior. This proves that $\mathcal{L}$ is a fundamental domain for $\left<\phi_0\right>$ acting on $\K$. Furthermore, it is clear that $\mathcal{L}$ is a polytope, being the intersection of the polytope $\K$ with the closed half-space $\{ v_0 \geq 0\}$.
\end{proof}

To obtain $v_0$ as in the preceding proposition, let $\phi_0(\alpha_i) = \alpha_{\pi(i)}$, then the permutation $\pi \neq 1$ is an involution of $I$ (for later use note that, by Proposition \ref{lemma1}, $\pi$ leaves $J$ invariant).
Choose a non-empty $J_0\subseteq I$ which decomposes as a disjoint union $J_0 = J^+_0 \cup J^-_0$ such that $\pi(J^+_0) = J^-_0$ and introduce the corresponding {\it balanced root with support $J_0$}
\begin{eqnarray}
\label{eq:def-balanced}
v_0 & := & \sum_{i \in J^+_0} \alpha_i - \sum_{i \in J^-_0} \alpha_i  = \sum_{i \in J^+_0} \alpha_i - \alpha_{\pi(i)}
\end{eqnarray}
Clearly, $v_0 \neq 0$ and $\phi_0(v_0) = -v_0$, which is enough to get the fundamental domain $\mathcal{L}$ of Proposition \ref{prop:u0-slice}.
For $L \subseteq I$ we have the inner product
\begin{equation}
\label{eq:inner-product-vertex}
\Big(v_0, \, \sum_{j \in L} \varpi^\vee_j \Big) = |L \cap J^+_0| - |L \cap J^-_0|
\end{equation}

From now on we will consider a fundamental domain $\mathcal{L}$ obtained from $v_0$ with the additional condition that its support
$J_0$ is contained in $J$, the so called balanced minuscule coroots.  
A nonempty face $\mathcal{F} \subseteq \mathcal{L}$ can be written (non-uniquely) as an intersection
\begin{equation}
\label{eq:def-face}
\mathcal{F}
=\mathcal{L}\cap\bigg(\bigcap_{i\in K}H_i\cap\bigcap_{j\in L}H_j^0\bigg)
\end{equation}
where $K \subseteq I$, $L \subseteq J \cup \{0\}$. In particular, $x \in \mathcal{F}$ has the form
\begin{equation}
\label{eq:x-generic}
x = \sum_{i \in I \setminus (K \cup L)} (\alpha_i, x)\varpi^\vee_i
+ 
(1 - (\alpha_0,x))\sum_{j \in L \cap I} \varpi^\vee_j
\end{equation}
Clearly, there exists $K, L$ with minimal cardinality such that $\codim \mathcal{F} = |K| + |L|$ (the number of independent equations that define the face).  
But note that $K$ and $L$ can intersect: 
for example, a vertex $\varpi^\vee_i/m_i$ of $\K$ in \eqref{eq:kp-domain-vert}, $i\in I\setminus J$, belongs to any $H_j \cap H_j^0$, $j \in J \setminus\{i\}$, as in the example below in type $B_2$.  
\begin{center}
\def\svgwidth{13cm}
\begingroup%
  \makeatletter%
  \providecommand\color[2][]{%
    \errmessage{(Inkscape) Color is used for the text in Inkscape, but the package 'color.sty' is not loaded}%
    \renewcommand\color[2][]{}%
  }%
  \providecommand\transparent[1]{%
    \errmessage{(Inkscape) Transparency is used (non-zero) for the text in Inkscape, but the package 'transparent.sty' is not loaded}%
    \renewcommand\transparent[1]{}%
  }%
  \providecommand\rotatebox[2]{#2}%
  \newcommand*\fsize{\dimexpr\f@size pt\relax}%
  \newcommand*\lineheight[1]{\fontsize{\fsize}{#1\fsize}\selectfont}%
  \ifx\svgwidth\undefined%
    \setlength{\unitlength}{133.75295874bp}%
    \ifx\svgscale\undefined%
      \relax%
    \else%
      \setlength{\unitlength}{\unitlength * \real{\svgscale}}%
    \fi%
  \else%
    \setlength{\unitlength}{\svgwidth}%
  \fi%
  \global\let\svgwidth\undefined%
  \global\let\svgscale\undefined%
  \makeatother%
  \begin{picture}(1,0.34634876)%
    \lineheight{1}%
    \setlength\tabcolsep{0pt}%
    \put(0,0){\includegraphics[width=\unitlength,page=1]{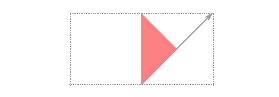}}%
    \put(0.77993298,0.04212782){\makebox(0,0)[lt]{\lineheight{1.25}\smash{\begin{tabular}[t]{l}$\alpha_1$\end{tabular}}}}%
    \put(0.20798303,0.31128085){\makebox(0,0)[lt]{\lineheight{1.25}\smash{\begin{tabular}[t]{l}$\alpha_2$\end{tabular}}}}%
    \put(0,0){\includegraphics[width=\unitlength,page=2]{B2-vertex-top-face.pdf}}%
    \put(0.53183744,0.11820006){\color[rgb]{1,1,1}\makebox(0,0)[lt]{\lineheight{1.25}\smash{\begin{tabular}[t]{l}$\mathcal{K}$\end{tabular}}}}%
    \put(0.53183744,0.19670299){\makebox(0,0)[lt]{\lineheight{1.25}\smash{\begin{tabular}[t]{l}$\mathcal{A}$\end{tabular}}}}%
    \put(0,0){\includegraphics[width=\unitlength,page=3]{B2-vertex-top-face.pdf}}%
    \put(0.44211963,0.13990414){\color[rgb]{0,0,1}\makebox(0,0)[lt]{\lineheight{1.25}\smash{\begin{tabular}[t]{l}$H_2^0$\end{tabular}}}}%
    \put(0.72248696,0.23360157){\color[rgb]{0.66666667,0,0}\makebox(0,0)[lt]{\lineheight{1.25}\smash{\begin{tabular}[t]{l}$H_2$\end{tabular}}}}%
    \put(0,0){\includegraphics[width=\unitlength,page=4]{B2-vertex-top-face.pdf}}%
    \put(0.72248696,0.10101507){\color[rgb]{0.66666667,0,0}\makebox(0,0)[lt]{\lineheight{1.25}\smash{\begin{tabular}[t]{l}$H_0$\end{tabular}}}}%
    \put(0,0){\includegraphics[width=\unitlength,page=5]{B2-vertex-top-face.pdf}}%
    \put(0.77993298,0.31128085){\color[rgb]{0.6,0.6,0.6}\makebox(0,0)[lt]{\lineheight{1.25}\smash{\begin{tabular}[t]{l}$\alpha_0$\end{tabular}}}}%
  \end{picture}%
\endgroup%

\end{center}

Write $x \in \vertices(\mathcal{L})$ as the face
\begin{equation}
\label{eq:def-vert}
\{x\}=\mathcal{L}\cap\Big(\bigcap_{i\in K}H_i\cap\bigcap_{j\in L}H_j^0\Big)
\end{equation}
with codimension $|K| + |L| = n$. If $0 \not\in L$ then it is immediate that $x$ is a vertex of $\mathcal{K}$, since the slicing hyperplane $H_0^0$ does not appear in the above intersection. If $0 \in L$ then $x$ does belong to $H_0^0$. If, furthermore, if $x$ is not a vertex of $\K$, we show below that $K \cap L = \emptyset$ because, loosely speaking, the balanced minuscule root makes $x$ sit in the middle of $\K$.

\begin{lem}
Let $v_0$ be a balanced minuscule root.
\label{lem:empty}
If $0 \in L$ and $|K| + |L| = n$, then either $K \cap L = \emptyset$ or $x\in\vertices(\mathcal{K})$.
\end{lem}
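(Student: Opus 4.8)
The plan is to prove the contrapositive: assuming $K\cap L\neq\emptyset$, I will show $x\in\vertices(\mathcal K)$. The whole argument hinges on one structural feature of a balanced \emph{minuscule} root, namely that its support $J_0$ lies inside $J$; this makes the slicing functional $(v_0,\cdot)$ vanish identically on the subspace $A_J:=\bigcap_{j\in J}H_j$, so that the passage from $\mathcal K$ to $\mathcal L$ is invisible there.

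First, since $K\subseteq I$ and $L\subseteq J\cup\{0\}$, any element of $K\cap L$ lies in $J$; choose $j_0\in K\cap L$. From $x\in H_{j_0}\cap H_{j_0}^0$ we read off $(\alpha_{j_0},x)=0$ and $(\alpha_0+\alpha_{j_0},x)=1$, hence $(\alpha_0,x)=1$. (In particular $J\neq\emptyset$; when $J=\emptyset$ we have $L=\{0\}$ and $K\cap L=\emptyset$ trivially.) Feeding $(\alpha_0,x)=1$ back into $x\in\mathcal K$: by \eqref{eq:kp-domain} each $j\in J$ satisfies $(\alpha_j,x)\geq0$ and $(\alpha_0+\alpha_j,x)\leq1$, and subtracting gives $0\leq(\alpha_j,x)=(\alpha_0+\alpha_j,x)-(\alpha_0,x)\leq0$, so $(\alpha_j,x)=0$. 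Thus $x\in A_J$. Since $J_0\subseteq J$, the functional $(v_0,\cdot)$ vanishes on all of $A_J$, whence $\mathcal L\cap A_J=\mathcal K\cap A_J$.

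Finally I pass to faces. Each hyperplane $H_j$ ($j\in J$) supports both $\mathcal K$ and $\mathcal L$ (they lie in $\mathcal A\subseteq H_j^+$) and contains $0$, so $A_J$ cuts out faces of both polytopes; as $x$ is a vertex of $\mathcal L$ lying in the face $\mathcal L\cap A_J$, it is a vertex of $\mathcal L\cap A_J=\mathcal K\cap A_J$, hence of the face $\mathcal K\cap A_J$ of $\mathcal K$, hence of $\mathcal K$. Concretely, in the coordinates $c_i=(\alpha_i,y)$ ($i\in I\setminus J$) on $A_J$ the face $\mathcal K\cap A_J$ is the simplex $\{c_i\geq0,\ \sum_{i\in I\setminus J}m_ic_i\leq1\}$, whose nonzero vertices are exactly the points $\varpi_i^\vee/m_i$; since $(\alpha_0,x)=1\neq0$ this identifies $x$ with one of them, so $x\in\vertices(\mathcal K)$ as listed in \eqref{eq:kp-domain-vert}. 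The only step that is not completely mechanical is the implication $(\alpha_0,x)=1\Rightarrow x\in A_J$ — that lying on the top bounding hyperplane of the ambient alcove already forces $x$ onto the low-dimensional upper face of $\mathcal K$ — after which the disappearance of the $v_0$-slice on $A_J$ (the point where ``balanced minuscule'' enters) finishes the proof.
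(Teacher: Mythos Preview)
Your proof is correct and proceeds by a genuinely different route from the paper's. The paper first invokes the minimality hypothesis $|K|+|L|=n$ to force $L\cap I=\{j_0\}$ and $|I\setminus K|=2$, writes $x=a_r\varpi^\vee_r+a_s\varpi^\vee_s$ for $\{r,s\}=I\setminus K$, and then performs a three-way case analysis on $|J_0\cap\{r,s\}|\in\{0,1,2\}$, ruling out the extreme cases and extracting $x=\varpi^\vee_s/m_s$ with $s\notin J$ in the remaining one.

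Your argument is more conceptual and shorter. The key step is that $(\alpha_0,x)=1$ together with $x\in\mathcal{K}$ forces $(\alpha_j,x)=0$ for \emph{every} $j\in J$, not merely for those $j\in L\cap I$; since a balanced minuscule root has support $J_0\subseteq J$, the slicing functional $(v_0,\cdot)$ vanishes identically on the face $A_J$, so $\mathcal{L}\cap A_J=\mathcal{K}\cap A_J$ and the vertex $x$ of $\mathcal{L}$ is automatically a vertex of $\mathcal{K}$. This bypasses both the minimality of $|K|+|L|$ and the case split entirely; in fact you never use the hypotheses $0\in L$ or $|K|+|L|=n$. What the paper's approach buys is explicitness about which vertex $x$ is and why the borderline configurations fail, information that your final simplex description of $\mathcal{K}\cap A_J$ recovers after the fact. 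One small remark: your last sentence (``$(\alpha_0,x)=1\neq0$ identifies $x$ with one of them'') tacitly uses that $x$ is already known to be a vertex of the simplex $\mathcal{K}\cap A_J$ from the preceding face argument; the equation $(\alpha_0,x)=1$ by itself only places $x$ on the top facet, not at a corner.
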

\begin{proof}
To the intersections \eqref{eq:def-vert} corresponds the minimum number of $|K| + |L| = n$ equations 
\begin{equation}
\label{eq:x}
(\alpha_i, x) = 0 \quad (i \in K) \qquad 
(\alpha_0 + \alpha_j, x) = 1 \quad (j \in L \cap I) \qquad
(v_0,x) = 0
\end{equation}
that determine $x$ uniquely.
Suppose there exists $j_0 \in K \cap L$, then $(\alpha_0 + \alpha_{j_0}, x) = (\alpha_0, x) = 1$. 
Then, for every $j \in L \cap I$,
the equation $(\alpha_0 + \alpha_j, x) = 1$ implies $\alpha_j(x) = 0$,
so that all these equations for $x$ become the same equation
$(\alpha_0, x) = 1$. By the minimality of $n$, it follows that $L \cap I = \{ j_0 \}$, so that $L = \{0, j_0\}$ and $|L|=2$. Thus $|K| = n-2$,  so that $I \setminus K = \{r, s\}$ and thus
\[ x = a_r \varpi^\vee_r + a_s \varpi^\vee_s \in H_0^0 \cap H_0 \quad \mbox{ for } \quad
  a_r, a_s \geq 0.\] Equation $(\alpha_0, x) = 1$ imposes
\begin{equation}
\label{eq:x0}
m_r a_r + m_s a_s = 1
\end{equation}
In order that the equation $(v_0, x) = 0$ for $H_0^0$ also
imposes a non-trivial equation for $a_r$,$a_s$, it is necessary that $J_0 \cap \{r, s\} \neq \emptyset$, otherwise it would yield the trivial equation $0=0$ and \eqref{eq:x} would not determine $x$ uniquely. 

We claim that $|J_0 \cap \{r,s\}| = 2$ is impossible. Indeed, 
if $J_0 \cap \{r,s\} = \{r,s\}$ then
$(v_0, x) = 0$ imposes two possibilities. 
Either $m_r a_r + m_s a_s = 0$, contradicting \eqref{eq:x0}.
Or $m_r a_r - m_s a_s = 0$, which together with \eqref{eq:x0} implies $m_r a_r = m_s a_s = \frac{1}{2}$. It follows that
$$
(\alpha_0 + \alpha_r, x) = m_r a_r + m_s a_s + a_r = 1 + \frac{1}{2m_r} > 1
$$
contradicting $x \in \K$, since $r \in J$. 

Finally, we claim that  $|J_0 \cap \{r,s\}| = 1$ implies $x \in \vertices(\K)$.
Indeed, if $J_0 \cap \{r,s\} = \{r\}$, then $(v_0, x) = 0$ imposes  $a_r = 0$, so that $a_s = 1/m_s$ and $x = \varpi^\vee_s/m_s$. If $s \in J$, then $m_s=1$ and $(\alpha_0+\alpha_s,x)=2>1$, contradicting $x\in\mathcal{K}$. If $s \not\in  J$, then $x\in\vertices(\mathcal{K})$ by \eqref{eq:kp-domain-vert}.
The case $J_0\cap\{r,s\}=\{s\}$ is analogous.
\end{proof}

\begin{theo}
\label{theo:vertex}
Let $v_0$ be a balanced minuscule root.
We have that
\begin{eqnarray}
\vertices(\mathcal{L}) &=& \mathcal{L} \cap \vertices(\mathcal{K}) \nonumber\\
\label{A-vert}
&=& \Big\{\frac{\varpi^\vee_i}{m_i}\Big\}_{i\in I\setminus J}
\cup\bigg\{\frac{1}{|L|+1}\sum_{j\in L}\varpi^\vee_j
\mid L\subseteq J, \quad |L \cap J^+_0| \geq |L \cap J^-_0|
\bigg\} \nonumber
\end{eqnarray}
\end{theo}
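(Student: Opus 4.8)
The plan is to prove the two asserted equalities separately. The second one --- that $\mathcal{L}\cap\vertices(\mathcal{K})$ equals the displayed explicit set --- is a direct computation using \eqref{eq:kp-domain-vert} and \eqref{eq:inner-product-vertex}. The first one, $\vertices(\mathcal{L})=\mathcal{L}\cap\vertices(\mathcal{K})$, says that slicing $\mathcal{K}$ through some of its vertices creates no new vertex; this is the geometric heart of the matter and is where Lemma~\ref{lem:empty} enters.

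First I would dispose of the easy facts. The inclusion $\mathcal{L}\cap\vertices(\mathcal{K})\subseteq\vertices(\mathcal{L})$ is pure convexity: $\mathcal{L}=\mathcal{K}\cap\{v_0\ge0\}$ is a convex subset of $\mathcal{K}$, so an extreme point of $\mathcal{K}$ that lies in $\mathcal{L}$ is automatically an extreme point of $\mathcal{L}$. To identify $\mathcal{L}\cap\vertices(\mathcal{K})$ explicitly I run through the two families in \eqref{eq:kp-domain-vert}: for $i\in I\setminus J$, since the balanced minuscule root $v_0$ has support $J_0\subseteq J$, one gets $(v_0,\varpi^\vee_i/m_i)=0$, so every such vertex lies on $H_0^0$ and hence in $\mathcal{L}$; for $L\subseteq J$, formula \eqref{eq:inner-product-vertex} gives $(v_0,\tfrac1{|L|+1}\sum_{j\in L}\varpi^\vee_j)=\tfrac1{|L|+1}(|L\cap J^+_0|-|L\cap J^-_0|)$, which is $\ge0$ exactly when $|L\cap J^+_0|\ge|L\cap J^-_0|$. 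This simultaneously yields the explicit description and the second equality.

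The substantial inclusion is $\vertices(\mathcal{L})\subseteq\vertices(\mathcal{K})$. Let $x\in\vertices(\mathcal{L})$ and fix a minimal representation $\{x\}=\mathcal{L}\cap(\bigcap_{i\in K}H_i\cap\bigcap_{j\in L}H_j^0)$ with $K\subseteq I$, $L\subseteq J\cup\{0\}$, $|K|+|L|=n$, as in \eqref{eq:def-vert}. If $0\notin L$, every hyperplane occurring here is a bounding hyperplane of $\mathcal{K}$ (Lemma~\ref{lem:boundings_F}), so $G:=\mathcal{K}\cap(\bigcap_{i\in K}H_i\cap\bigcap_{j\in L}H_j^0)$ is a face of $\mathcal{K}$ with $G\cap\{v_0\ge0\}=\{x\}$; then $x$ is the unique maximizer of the linear functional $v_0$ on the polytope $G$, hence a vertex of $G$, hence of $\mathcal{K}$. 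If $0\in L$, Lemma~\ref{lem:empty} leaves only the case $K\cap L=\emptyset$ to treat (otherwise $x\in\vertices(\mathcal{K})$ already). Writing $L'=L\setminus\{0\}$, minimality forces $I\setminus(K\cup L')=\{r\}$ for a single index $r$, and \eqref{eq:x-generic} gives $x=a\,\varpi^\vee_r+b\sum_{l\in L'}\varpi^\vee_l$ with $a=(\alpha_r,x)\ge0$, $b=1-(\alpha_0,x)\ge0$. Evaluating $(\alpha_0,x)$ produces $m_r a+(|L'|+1)b=1$, while the slicing equation $(v_0,x)=0$ becomes $\varepsilon_r a+\delta b=0$, where $\varepsilon_r\in\{-1,0,1\}$ records whether $r$ lies in $J^+_0$, in $J^-_0$, or in neither, and $\delta=|L'\cap J^+_0|-|L'\cap J^-_0|$. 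I would then solve this $2\times2$ system under the constraints $a,b\ge0$, $m_r\ge1$, the uniqueness of $x$, and $x\in\mathcal{K}$ (which, when $r\in J$, forces $\delta\in\{-1,0\}$ via the minuscule inequality at the index $r$); one finds that $x$ is necessarily $\varpi^\vee_r/m_r$ with $r\in I\setminus J$, or $\tfrac1{|L'|+1}\sum_{l\in L'}\varpi^\vee_l$, or $\tfrac1{|L'|+2}\sum_{l\in L'\cup\{r\}}\varpi^\vee_l$ with $r\in J$ --- each of which is a vertex of $\mathcal{K}$ by \eqref{eq:kp-domain-vert}.

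The main obstacle is exactly this last configuration, $0\in L$ with $K\cap L=\emptyset$: it is the only place where a generic slicing hyperplane could produce a spurious vertex, and one must exploit that $v_0$ is \emph{balanced} and \emph{minuscule} --- i.e.\ has support inside $J$ --- through Lemma~\ref{lem:empty} and the sign bookkeeping of $\varepsilon_r$ and $\delta$, to see that the point it cuts out already appears in the vertex list \eqref{eq:kp-domain-vert}. Keeping track of which sign combinations of $\varepsilon_r,\delta$ are compatible with $x\in\mathcal{K}$, and of the resulting value of $x$ in each, is the delicate part; everything else is convexity together with the explicit description \eqref{eq:kp-domain-vert}.
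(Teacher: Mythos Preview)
Your proposal is correct and follows the same overall architecture as the paper: the trivial inclusion $\mathcal{L}\cap\vertices(\mathcal{K})\subseteq\vertices(\mathcal{L})$ by convexity, the explicit identification of $\mathcal{L}\cap\vertices(\mathcal{K})$ via \eqref{eq:kp-domain-vert} and \eqref{eq:inner-product-vertex}, and the hard inclusion via a minimal representation \eqref{eq:def-vert} together with Lemma~\ref{lem:empty}.

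Where you diverge is in the treatment of the core case $0\in L$, $K\cap L=\emptyset$. The paper removes $H_0^0$ to obtain an \emph{edge} $\mathcal{F}$ of $\mathcal{K}$, writes $x=(1-t)x_0+tx_1$ with $x_0,x_1\in\vertices(\mathcal{K})$, classifies the possible endpoint types according to which additional hyperplane ($H_{i_0}$ or $H_{i_0}^0$) pins them down, and then runs through the four pairings to force $x_0=x_1$. You instead parametrize the same edge directly by $x=a\,\varpi^\vee_r+b\sum_{l\in L'}\varpi^\vee_l$ and solve the $2\times2$ system $m_r a+(|L'|+1)b=1$, $\varepsilon_r a+\delta b=0$, using the minuscule inequality at $r$ to bound $\delta$. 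This is genuinely more economical: the paper's four-case pairing analysis is replaced by a short sign discussion on $(\varepsilon_r,\delta)$. One small slip: for $\varepsilon_r=-1$ the admissible range is $\delta\in\{0,1\}$ rather than $\{-1,0\}$, but the conclusion is unchanged. Your handling of the degenerate subcase $\varepsilon_r=\delta=0$ (via ``uniqueness of $x$'') deserves one more sentence: the affine line carrying the edge then lies entirely in $H_0^0$, so $\mathcal{L}$ meets it in the full edge, contradicting that the intersection is $\{x\}$ unless that edge is already a vertex of $\mathcal{K}$.

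Your argument for $0\notin L$ --- that $x$ is the unique maximizer of $v_0$ on the face $G$ of $\mathcal{K}$, hence a vertex of $G$ --- is in fact more careful than the paper's ``it is immediate''.
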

\begin{proof}
Since $\mathcal{L} \subseteq \K$, we clearly have $\mathcal{L} \cap \vertices(\K) \subseteq \vertices(\mathcal{L})$.  For the reciprocal inclusion, let $x \in \vertices(\mathcal{L})$. By Lemma \ref{lem:empty} and the discussion preceding it, we can write $x$ as in  \eqref{eq:def-vert} with $|K|+|L| = n$, and assume that $0 \in L$, $K \cap L = \emptyset$.
It follows that 
\[ 
\label{eq:edge}
x \in 
\mathcal{F} := \mathcal{K}\cap\bigg(\bigcap_{i\in K}H_i\cap\bigcap_{j\in L \cap I}H_j^0\bigg),
\]
which is an edge of $\K$, since $|K| + |L \cap I| = n-1$. 
Thus
\begin{equation}
\label{eq:comb-convex}
x=(1-t)x_0 + tx_1
\end{equation}
for $x_0, x_1 \in \vertices(\K) \cap \mathcal{F}$ and $t\in[0,1]$.
We can assume that $t \in (0,1)$, otherwise we already get that $x \in \vertices(\K)$. Furthermore, we can assume that both $x_0, x_1 \neq 0$. Indeed, if $x_0 = 0$ then $0=(v_0,x) = t(v_0,x_1)$ so that both $(v_0,x_0) = (v_0,x_1)=0$, since $t>0$. It follows that $x_0,x_1 \in H_0^0 \cap \mathcal{K}\subseteq\mathcal{L}$ and, since $x$ is a vertex of $\mathcal{L}$, it follows that $x=x_0=x_1$ and we already get that $x \in \vertices(\K)$.
By \eqref{eq:x-generic} and the disjoint union
\begin{equation}
\label{eq:disj-union-faces}
I = K \cup (L \cap I) \cup \{j_0\}
\end{equation}
$x, x_0, x_1 \in \mathcal{F}$ have the form
\begin{equation}
\label{eq:x-genericj0}
z = (\alpha_{j_0}, z)\varpi^\vee_{j_0} + (1 - (\alpha_0,z))\sum_{j \in L \cap I} \varpi^\vee_j, 
\end{equation}
Denote $\ell:=|L\cap I|+1$. 
Since $x_0 \in \vertices(\K)$, either
\begin{enumerate}[label=\roman*)]
\item There is an additional $H_{i_0}$ with $i_0 \in I \setminus K$,
  in this case either:
\begin{align*}
x_0=\frac{\varpi^\vee_{j_0}}{m_{j_0}}\quad(j_0 \not\in J)
\qquad\text{or}\qquad
x_0=\frac1\ell\sum_{j\in L\cap I}\varpi^\vee_j &&
\end{align*}
Indeed, by \eqref{eq:disj-union-faces} either $i_0 \in L$ or $i_0 = j_0$.
Since $\alpha_{i_0}(x_0) = 0$, if $i_0\in L$, then 
$$
1 = (\alpha_0 + \alpha_{i_0}, x_0) = (\alpha_0,x_0)
$$
so that by \eqref{eq:x-genericj0} and the description \eqref{eq:kp-domain-vert}, \eqref{eq:kp-domain-vert-alt} of the vertices of $\mathcal{K}$, we get
$x_0=\varpi^\vee_{j_0}/m_{j_0}$, with $j_0 \not\in J$. Otherwise, $i_0=j_0$ so that by \eqref{eq:x-genericj0} we have $x_0 = (1 - (\alpha_0,x))\sum_{j \in L \cap I} \omega_j$.
Since $L \cap I \subseteq J$, by the description \eqref{eq:kp-domain-vert}, \eqref{eq:kp-domain-vert-alt} of the vertices of $\mathcal{K}$, we get $x_0=\frac1\ell\sum_{j\in L\cap I}\varpi^\vee_j$, as claimed.

\item Or there is an additional $H_{i_0}^0$ with $i_0 \in J \setminus L$, in this case either:
\begin{align*}
x_0=\frac{\varpi^\vee_{j_0}}{m_{j_0}}\quad(j_0 \not\in J)
\qquad\text{or}\qquad
x_0=\frac{1}{\ell+1}\left(\varpi^\vee_{j_0}+\sum_{j\in L\cap I}\varpi^\vee_j\right)
\quad(i_0 = j_0 \in J) &&
\end{align*}
Indeed, by \eqref{eq:disj-union-faces} either $i_0 \in K \cap J$ or $i_0 = j_0 \in J$. By \eqref{eq:x-genericj0}, if $i_0\in K$, then arguing as in the previous case, we get that $x_0=\varpi^\vee_{j_0}/{m_{j_0}}$ for $j_0 \not\in J_0$.
Otherwise, we have $i_0=j_0 \in J$, so that
$x_0\in\bigcap_{j\in(L\cap I)\cup\{j_0\}}H_j^0$, thus
$x_0=\left(\varpi^\vee_{j_0}+\sum_{j\in L\cap I}\varpi^\vee_j\right)/(\ell + 1)$,
as claimed.
\end{enumerate}
Respectively for $x_1 \in \vertices(\K)$, there is an additional $H_{i_1}$ with $i_1 \in I \setminus K$ or $H_{i_1}^0$ with $i_1 \in J \setminus L$ containing $x_1$.

We claim that $x_0 = x_1$, so that $x=x_0=x_1$ and $x \in \vertices(\K)$, as claimed. Indeed, we suppose that $x_0 \neq x_1$ and distinguish four cases.
\begin{enumerate}[label=$\bullet$]

\item Suppose that $x_0\in H_{i_0}^0$ and $x_1\in H_{i_1}^0$, 
$i_0,i_1\in J \setminus L$. 
By items (i) and (ii) above this cannot occur because either $j_0\in J$ or not, but not both.

\item Suppose that $x_0\in H_{i_0}$ and $x_1\in H_{i_1}$, so that $i_0,i_1 \in I \setminus K$. 
By items (i) and (ii) above, we can assume that
$x_0=\frac{\varpi^\vee_{j_0}}{m_{j_0}}$, $(j_0 \not\in J)$, $x_1=\frac1\ell\sum_{j\in L\cap I}\varpi^\vee_j$. Since $j_0$ is not in the support of $v_0$, we get
$(v_0,x_0) = 0$ and 
\[
0=(v_0,x)=t(v_0,x_1)
\]
so that also $(v_0,x_1)=0$, since $t>0$. It follows that $x_0,x_1 \in H_0^0 \cap \mathcal{K}\subseteq\mathcal{L}$ and, since $x$ is a vertex of $\mathcal{L}$, it follows that $x=x_0=x_1$, a contradiction.

\item Suppose that $x_0\in H_{i_0}$ with $i_0 \in I \setminus K$ and 
$x_1\in H_{i_1}^0$ with $i_1\in J \setminus L$. 
In item (i) above we cannot have $x_0=\varpi^\vee_{j_0}/m_{j_0}$, $j_0 \not\in J$, since by item (ii) the only possibility for $x_1 \neq x_0$ would have $j_0 \in J$, a contradiction.
Thus, $x_0 =(\sum_{j\in L\cap I}\varpi^\vee_j) / \ell$. By item (ii) we have two possibilities for $x_1$.

If $x_1 = \varpi^\vee_{j_0}/m_{j_0}$, $(j_0 \not\in J)$, then $j_0$ is not in the support of $v_0$. Thus we get $(v_0,x_1)=0$ and
$$
0 = (v_0, x) = (1-t)(v_0,x_0)
$$
so that both $(v_0,x_0) = (v_0,x_1)=0$, since $t<1$. It follows that $x_0,x_1 \in H_0^0 \cap \mathcal{K}\subseteq\mathcal{L}$ and, since $x$ is a vertex of $\mathcal{L}$, it follows that $x=x_0=x_1$, a contradiction.

If $x_1=\left(\varpi^\vee_{j_0}+\sum_{j\in L\cap I}\varpi^\vee_j\right)/(\ell +1)$
then
\[x=\frac{t}{\ell+1}\varpi^\vee_{j_0}+\left(\frac{t}{\ell+1}+\frac{1-t}{\ell}\right)\sum_{j\in L\cap I}\varpi^\vee_j.\]
We can thus compute 
\[
0=
(v_0, x)=\left(\frac{t}{\ell+1}+\frac{1-t}{\ell}\right)
(|L\cap J^+_0|-|L\cap J^-_0|)
+
\left(
\frac{t}{\ell+1}
\right)
\cdot
\left\{
\begin{array}{cc}
1 & \text{if $j_0 \in J^+_0$} \\[.5em]
0 & \text{if $j_0 \not\in J_0$} \\[.5em]
-1 & \text{if $j_0 \in J^-_0$}
\end{array}\right.
\]
If $j_0 \in J^-_0$ then the above equality implies that $|L\cap J^+_0|-|L\cap J^-_0| > 0$ so that
\[
\frac{t}{\ell+1}+\frac{1-t}{\ell}\le \left(\frac{t}{\ell+1}+\frac{1-t}{\ell}\right)
(|L\cap J^+_0|-|L\cap J^-_0|)
=\frac{t}{\ell+1}
\]
thus $1-t\le 0$, i.e. $t=1$, which contradicts our assumption that $t \in (0,1)$. 
The case $j_0 \in J^+_0$ is entirely analogous.  
If $j_0 \not\in J_0$, then the above equality implies $|L \cap J^+_0| = |L \cap J^-_0|$ so that, by \eqref{eq:inner-product-vertex}, we get $(v_0,x_1) = 0$.  Then $t \in (0,1)$ implies $(v_0,x_0) = 0$ so that both $x_0, \, x_1\in H_0^0 \cap \K \subseteq \mathcal{L}$.  Since $x$ is a vertex of $\mathcal{L}$, it follows that $x = x_0 = x_1$, a contradiction.

\item The case $x_0\in H_{i_0}^0$ and $x_1\in H_{i_1}$ is treated similarly, changing $t$ into $1-t$.
\end{enumerate}
This proves the first equality $\vertices(\mathcal{L}) = \mathcal{L} \cap \vertices(\mathcal{K})$ of the theorem. The second equality then follows from \eqref{eq:kp-domain-vert}, \eqref{eq:inner-product-vertex} and the definition of $\mathcal{L}$.
\end{proof}

In the following subsections, we specify the results of this section. Of course, when $\Aut(D)=1$, we have $\Aut(\A)=\Omega$ and we can take $\mathcal{L}=\mathcal{K}$, so there is nothing to do. Thus, it is enough to focus on types $A_{n\ge2}$, $D_{n\ge4}$ and $E_6$.

\subsection{Type $A_{n\ge2}$}
The highest root is $\alpha_0 = \alpha_1 + \alpha_2 + \ldots + \alpha_n$ so that $J = I$ and the Komrakov--Premet polytope $\K$ has vertices 
\begin{equation}
\label{eq:vert-KP}
\mathrm{vert}(\mathcal{K}) =
\left\{\frac{1}{|L|+1}\sum_{j\in L}\varpi^\vee_j\right \}_{L\subseteq I}
\end{equation}
It follows that $|\mathrm{vert}(\K)| = 2^{|I|} = 2^n$ and that $\K$ does not meets the upper face $H_0$ of $\A$.
By Table \ref{table:thm1} $\Aut(D)$ is generated by one isometric involution $\phi_0$, which acts on $\Pi$ by $\phi_0(\alpha_i)=\alpha_{n+1-i}$. It completes the fundamental group $\Omega \simeq \Z_{n+1}$ in $\Aut(\A)$ 
in such a way that
$$
\Aut(\A) \simeq I_2(n+1)
$$
%

We choose the maximal support for a minuscule balanced root $v_0$, given by $J^+_0 = \{1,\ldots, \lfloor n/2 \rfloor \}$, 
$J^-_0 = \{ \lceil n/2 \rceil + 1, \ldots, n \}$ so that
\[
v_0:=\sum_{i=1}^{\lfloor \frac{n}{2} \rfloor}
(\alpha_i-\alpha_{n+1-i})
\]
By Proposition \ref{prop:u0-slice}, $\mathcal{L}$ is a fundamental polytope for $\Aut(D)=\left<\phi_0\right>$ acting on $\mathcal{K}$. Theorem \ref{theo:vertex} then implies the following.

\begin{theo}\label{thm:fund_dom_An}
$\mathcal{L}$ is a fundamental polytope for the action of $\Aut(\A)$ in $\A$ such that
\begin{eqnarray}
\label{A-vertb}
\vertices(\mathcal{L}) &=& \vertices(\mathcal{K}) \cap \mathcal{L} \nonumber\\
&=& 
\left\{\left.\frac{1}{|L|+1}\sum_{j\in L}\varpi^\vee_j~\right|
~L\subseteq I,~\,
\left|L\cap\left\{1,\dotsc, \left\lfloor \frac{n}{2} \right\rfloor \right\}\right|
\geq
\left|L\cap\left\{\left\lceil \frac{n}{2} \right\rceil + 1,\dotsc,n\right\}\right|
\right\}
\end{eqnarray}
\end{theo}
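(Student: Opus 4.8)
The plan is to obtain this as the specialization to type $A_n$ of the general machinery of this section, together with the decomposition $\Aut(\A) = \Omega \rtimes \Aut(D)$ of Theorem~\ref{theo:inversa-A}. As recalled after \eqref{eq:kp-domain}, $\Aut(D)$ stabilizes $\mathcal{K}$ (it fixes $\Pi$ and $\alpha_0$) and $\mathcal{K}$ is a fundamental domain for $\Omega$ acting on $\A$ (see \cite{Ga24}); hence it is enough to show that $\mathcal{L}$ is a fundamental polytope for $\Aut(D)$ acting on $\mathcal{K}$ and then to extract its vertices from Theorem~\ref{theo:vertex}.

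First I would record the $A_n$ data: all marks $m_i$ equal $1$, so $J = I$, and by Table~\ref{table:thm1} (Theorem~\ref{thm:A}) $\Aut(D) = \langle\phi_0\rangle$, where $\phi_0(\alpha_i) = \alpha_{n+1-i}$, i.e.\ the corresponding involution of $I$ is $i \mapsto n+1-i$. Then I would check that the prescribed $v_0$ is a balanced minuscule root in the sense preceding Lemma~\ref{lem:empty}: with $J_0^+ = \{1,\dots,\lfloor n/2\rfloor\}$ and $J_0^- = \{\lceil n/2\rceil+1,\dots,n\}$, these sets are disjoint, $i\mapsto n+1-i$ carries $J_0^+$ bijectively onto $J_0^-$, and $J_0 := J_0^+\cup J_0^- \subseteq I = J$ automatically; moreover $J_0^+\ne\emptyset$ since $n\ge 2$, so $v_0\ne 0$ by linear independence of the simple roots, and $\phi_0(v_0) = \sum_{i\in J_0^+}(\alpha_{n+1-i}-\alpha_i) = -v_0$.

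Granting these verifications, Proposition~\ref{prop:u0-slice} applies directly and shows that $\mathcal{L} = \{x\in\mathcal{K}\mid(v_0,x)\ge 0\}$ is a fundamental polytope for $\langle\phi_0\rangle = \Aut(D)$ acting on $\mathcal{K}$, which by the reduction above makes $\mathcal{L}$ a fundamental polytope for $\Aut(\A)$ acting on $\A$. For the vertex set I would invoke Theorem~\ref{theo:vertex} with the present $v_0$: since $I\setminus J = \emptyset$ the first family there is empty, and the surviving family subject to $|L\cap J_0^+|\ge|L\cap J_0^-|$ is exactly the set displayed in \eqref{A-vertb} once $J_0^{\pm}$ are written out.

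I expect no real difficulty: the combinatorial heart of the matter is already carried out in Lemma~\ref{lem:empty} and Theorem~\ref{theo:vertex}, and $A_n$ is the mildest case, since $J = I$ kills the term $\{\varpi^\vee_i/m_i\}_{i\in I\setminus J}$ and $\Aut(D)$ is a single involution. The only step deserving an explicit argument is the passage from ``fundamental polytope for $\Aut(D)$ on $\mathcal{K}$'' to ``fundamental polytope for $\Omega\rtimes\Aut(D)$ on $\A$'': surjectivity of the orbit map follows by composing the two covering properties, and for the empty-interior condition one writes $g\in\Aut(\A)$ uniquely as $g = \omega\phi$ with $\omega\in\Omega$, $\phi\in\Aut(D)$, so that $g\mathcal{L}\subseteq\omega\phi\mathcal{K} = \omega\mathcal{K}$, whence $g\mathcal{L}\cap\mathcal{L}$ lies inside $\omega\mathcal{K}\cap\mathcal{K}$ (empty interior when $\omega\ne 1$, as $\mathcal{K}$ is a fundamental domain for $\Omega$) or, when $\omega = 1$ and $\phi\ne 1$, inside $\phi\mathcal{L}\cap\mathcal{L}$ (empty interior by Proposition~\ref{prop:u0-slice}).
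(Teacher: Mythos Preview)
Your proposal is correct and follows essentially the same route as the paper: the paper's proof is the single sentence preceding the theorem, invoking Proposition~\ref{prop:u0-slice} for the fundamental-domain claim and Theorem~\ref{theo:vertex} for the vertex description, after having set up the $A_n$ data and the balanced minuscule root $v_0$ in the paragraphs before. You spell out a bit more explicitly the reduction from ``fundamental for $\Aut(D)$ on $\mathcal{K}$'' to ``fundamental for $\Aut(\A)$ on $\A$'' via $\Aut(\A)=\Omega\rtimes\Aut(D)$, which the paper leaves to the discussion in the introduction before Theorem~B, but the logic is identical.
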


\begin{rem}
For $n=2, 3$, the isometric involution $\phi_0$ is the orthogonal reflection in $V$ around $\{ \alpha_0 = 0 \}$.
For $n \geq 4$, the isometric involution $\phi_0$ is not a reflection.
Indeed, let $k = \left\lfloor \frac{n}{2} \right\rfloor$, the subspace 
\[
\ker(\phi_0-1)= 
\bigoplus_{i=1}^k \left<\alpha_i+\alpha_{n-i}\right>
\oplus
\left\{
\begin{array}{cr}
0& n\text{ even } \\
\left<\alpha_{ k + 1 } \right>& n\text{ odd } 
\end{array}
\right.
\]
having codimension $k>1$ in $V$, is not a hyperplane. We thus cannot use it to slice the Komrakov--Premet domain $\K$. We circumvent this technicality by rather looking at the hyperplane orthogonally completing the line 
\[\left<v_0\right>\le\ker(\phi_0+1)=\bigoplus_{i=1}^k \left<\alpha_i-\alpha_{n-i}\right>\]
\end{rem}

\begin{prop}\label{prop:nr_vertices}
$|\vertices(\mathcal{L})|=2^{n-1}+\frac{3-(-1)^n}{4}\binom{2\lfloor n/2\rfloor}{\lfloor n/2\rfloor}$
\end{prop}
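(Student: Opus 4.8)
The plan is to reduce the statement to an elementary count of subsets and evaluate it by a symmetry argument together with Vandermonde's identity. By Theorem~\ref{thm:fund_dom_An} the vertices of $\mathcal{L}$ are the vertices $\frac{1}{|L|+1}\sum_{j\in L}\varpi^\vee_j$ of $\mathcal{K}$ indexed by subsets $L\subseteq I$ with $|L\cap A|\ge|L\cap B|$, where $A:=\{1,\dots,\lfloor n/2\rfloor\}$ and $B:=\{\lceil n/2\rceil+1,\dots,n\}$; moreover the description \eqref{eq:vert-KP} together with the equality $|\vertices(\mathcal{K})|=2^n=2^{|I|}$ recorded there shows that $L\mapsto\frac{1}{|L|+1}\sum_{j\in L}\varpi^\vee_j$ is a bijection from the power set of $I$ onto $\vertices(\mathcal{K})$. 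Hence $|\vertices(\mathcal{L})|$ is exactly the number of subsets $L\subseteq I$ with $|L\cap A|\ge|L\cap B|$. Note that $A$ and $B$ are disjoint and $|A|=|B|=k$, where $k:=\lfloor n/2\rfloor$.

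First I would treat the case $n=2k$ even, in which $I=A\sqcup B$. Writing $a:=|L\cap A|$ and $b:=|L\cap B|$, the number of $L$ with prescribed $(a,b)$ is $\binom{k}{a}\binom{k}{b}$, so the number with $a=b$ is $\sum_{a=0}^{k}\binom{k}{a}^2=\binom{2k}{k}$ by Vandermonde's identity. The involution on the power set of $I$ interchanging the roles of $A$ and $B$ exchanges $\{a>b\}$ with $\{a<b\}$, so the $2^{2k}-\binom{2k}{k}$ remaining subsets split evenly. Therefore the number with $a\ge b$ equals $\binom{2k}{k}+\tfrac12\big(2^{2k}-\binom{2k}{k}\big)=2^{2k-1}+\tfrac12\binom{2k}{k}$, which is the claimed value since for even $n$ one has $2^{n-1}=2^{2k-1}$, $\tfrac{3-(-1)^n}{4}=\tfrac12$ and $\binom{2\lfloor n/2\rfloor}{\lfloor n/2\rfloor}=\binom{2k}{k}$.

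Next I would treat the case $n=2k+1$ odd, in which $I=A\sqcup\{k+1\}\sqcup B$ with $k+1$ lying in neither $A$ nor $B$; whether or not $k+1\in L$ is independent of the inequality $|L\cap A|\ge|L\cap B|$, so the count is exactly twice that of the even case with the same $k$, namely $2\big(2^{2k-1}+\tfrac12\binom{2k}{k}\big)=2^{2k}+\binom{2k}{k}$, which again matches the formula since for odd $n$ one has $2^{n-1}=2^{2k}$ and $\tfrac{3-(-1)^n}{4}=1$. The argument is entirely elementary; the only point requiring a little attention is the injectivity of $L\mapsto\frac{1}{|L|+1}\sum_{j\in L}\varpi^\vee_j$, which is already implicit in \eqref{eq:vert-KP}, so I do not expect any genuine obstacle.
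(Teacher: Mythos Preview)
Your proof is correct and follows essentially the same approach as the paper: both reduce to counting subsets $L\subseteq I$ with $|L\cap J_0^+|\ge|L\cap J_0^-|$, evaluate the ``equality'' case via Vandermonde as $\binom{2k}{k}$ (times $2$ in the odd case), and handle the remaining subsets by the symmetry swapping $J_0^+$ and $J_0^-$ (which in the paper is phrased as the $\phi_0$-action on $\vertices(\mathcal{K})$). The only difference is organizational---you split into even and odd $n$, while the paper treats both at once with the factor $\tfrac{3-(-1)^n}{2}$.
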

\begin{proof}
Denote $k:=\lfloor n/2\rfloor$. First, we find the cardinality of the set $\{\mu\in\vertices(\K)~|~\left<\mu,v_0\right>=0\}$. To an element of this set corresponds a subset of $\{1,\dotsc,k\}$, together with a subset of $\{k+(3-(-1)^n)/2,\dotsc,n\}$ of the same cardinality, maybe together with $\{(n+1)/2\}$ (when $n$ is odd). Partitioning the set of subsets of $\{1,\dotsc,k\}$ by cardinalities, we find that
\[|\{\mu\in\vertices(\K)~|~\left<\mu,v_0\right>=0\}|=\sum_{i=0}^k\binom{k}{i}^2\times\left\{\begin{array}{cc}1 & \text{if $n$ is even}, \\ 2 & \text{if $n$ is odd}.\end{array}\right.=\frac{3-(-1)^n}{2}\sum_{i=0}^k\binom{k}{i}^2.\]
Because $\phi_0$ acts on $\vertices(\K)$, we also find
\[2^n=|\vertices(\K)|=2|\{\mu\in\vertices(\K)~|~\left<\mu,v_0\right>>0\}|+|\{\mu\in\vertices(\K)~|~\left<\mu,v_0\right>=0\}|\]
and thus
\[|\{\mu\in\vertices(\K)~|~\left<\mu,v_0\right>>0\}|=\frac12\left(2^n-\frac{3-(-1)^n}{2}\sum_{i=0}^k\binom{k}{i}^2\right).\]
Now, using the Theorem \ref{thm:fund_dom_An}, we find
\begin{eqnarray*}
|\vertices(\mathcal{L})|&=&|\{\left<\mu,v_0\right>>0\}|+|\{\left<\mu,v_0\right>=0\}| =\frac12\left(2^n+\frac{3-(-1)^n}{2}\sum_{i=0}^k\binom{k}{i}^2\right)\\
&=&\frac12\left(2^n+\frac{3-(-1)^n}{2}\binom{2k}{k}\right),
\end{eqnarray*}
as stated.
\end{proof}

\begin{prop}\label{prop:boundings}
When $n \geq 4$, the bounding hyperplanes of $\mathcal{L}$ are precisely the $2n$ hyperplanes that bound $\K$, together with $\{x \in V~|~\left<\lambda,v_0\right>=0\}$.
\end{prop}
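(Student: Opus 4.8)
Write $G:=\{x\in V\mid(v_0,x)=0\}$, so that $\mathcal{L}=\mathcal{K}\cap G^+$ with $G^+:=\{x\mid(v_0,x)\geq0\}$. The plan is to first give a soft upper bound on which hyperplanes can bound $\mathcal{L}$, and then to check, by explicit choices of vertices, that every candidate really occurs. In type $A_n$ one has $J=I$, all $m_i=1$, and $\Omega\cong\Z_{n+1}\neq\{1\}$, so by Lemma~\ref{lem:boundings_F} the $2n$ bounding hyperplanes of $\mathcal{K}$ are exactly the $H_i$ $(i\in I)$ and the $H_j^0$ $(j\in I)$, and $H_0$ does not bound $\mathcal{K}$. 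Since $n\geq4$ we have $|J^+_0|=\lfloor n/2\rfloor\geq2$, hence $v_0$ is supported on at least four simple roots; in particular $G$ is a genuine hyperplane distinct from each $H_i$ (not proportional to a single $\alpha_i$) and from each $H_j^0$ (which misses the origin).

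The first step is the inclusion: every bounding hyperplane of $\mathcal{L}$ lies in $\{H_i\}_{i\in I}\cup\{H_j^0\}_{j\in I}\cup\{G\}$. Here $\mathcal{L}$ is $n$-dimensional (it is the fundamental polytope of Theorem~\ref{thm:fund_dom_An}; equivalently $\mathcal{K}$ has interior points on which $v_0>0$, e.g.\ near $\tfrac12\varpi^\vee_1$). A bounding hyperplane $H$ supports a facet $F$ of $\mathcal{L}=\mathcal{K}\cap G^+$; since $\partial(\mathcal{K}\cap G^+)\subseteq\partial\mathcal{K}\cup G$, the relative interior of $F$ meets $\partial\mathcal{K}$ or $G$, and a short convexity argument then forces the affine hull of $F$, hence $H$, to be either $G$ or a facet-hyperplane of $\mathcal{K}$; by the first paragraph this is one of the listed hyperplanes. (Alternatively this can be read off from $\vertices(\mathcal{L})=\mathcal{L}\cap\vertices(\mathcal{K})$ in Theorem~\ref{thm:fund_dom_An}.)

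It then remains to show that each of these $2n+1$ hyperplanes does bound $\mathcal{L}$. For $G$: the vertices $\tfrac12\varpi^\vee_1,\tfrac12\varpi^\vee_n$ of $\mathcal{K}$ satisfy $(v_0,\tfrac12\varpi^\vee_1)=\tfrac12>0$ and $(v_0,\tfrac12\varpi^\vee_n)=-\tfrac12<0$, so $\mathcal{K}$ has vertices strictly on both sides of $G$; joining interior perturbations of these two vertices by a segment produces a point of $\mathrm{int}(\mathcal{K})\cap G$, around which an $(n-1)$-ball lies in $\mathcal{K}\cap G=\mathcal{L}\cap G$, whence $\dim(\mathcal{L}\cap G)=n-1$. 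For the $H_i$ and $H_j^0$ the plan is to use the elementary fact that if $\mu$ is a vertex of a $d$-dimensional polytope $P$ with $f(\mu)>0$ for an affine functional $f$, then $P\cap\{f\geq0\}$ is again $d$-dimensional (the points $\mu$ and $(1-\varepsilon)\mu+\varepsilon p_i$, for $\mu,p_1,\dots,p_d$ affinely independent in $P$ and $\varepsilon>0$ small, are affinely independent and have $f>0$). Applied to $P=\mathcal{K}\cap H$ (a facet of $\mathcal{K}$, so $(n-1)$-dimensional), $f=(v_0,\cdot)$, and $\mu=v_L:=\tfrac1{|L|+1}\sum_{j\in L}\varpi^\vee_j$ (a vertex of $\mathcal{K}$ by \eqref{eq:vert-KP}), it suffices to find, for each bounding hyperplane $H$ of $\mathcal{K}$, a subset $L\subseteq I$ with $v_L\in H$ and, by \eqref{eq:inner-product-vertex}, $|L\cap J^+_0|>|L\cap J^-_0|$. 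Recalling $v_L\in H_i\iff i\notin L$ and $v_L\in H_j^0\iff j\in L$, one takes $L=\{a\}$ with $a\in J^+_0\setminus\{i\}$ for $H_i$; $L=\{j\}$ for $H_j^0$ with $j\in J^+_0$; $L=\{j,a\}$ with $a\in J^+_0$ for $H_j^0$ with $j$ the middle index (when $n$ is odd); and $L=\{j,a,b\}$ with $a\neq b$ in $J^+_0$ for $H_j^0$ with $j\in J^-_0$ (so that $|L\cap J^+_0|=2>1=|L\cap J^-_0|$). In each case the strict inequality and the incidence $v_L\in H$ are immediate, and $\mathcal{L}$ lies in the appropriate closed half-space of $H$ because $\mathcal{L}\subseteq\mathcal{K}$.

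The hypothesis $n\geq4$ is used precisely to guarantee $|J^+_0|\geq2$, both for $H_i$ (to pick $a\neq i$ in $J^+_0$) and for $H_j^0$ with $j\in J^-_0$; for $n=2,3$ some of these hyperplanes genuinely stop bounding $\mathcal{L}$. I expect the only point needing real care to be the inclusion step — ruling out any further bounding hyperplane — for which I would rely on the half-space-slicing structure $\mathcal{L}=\mathcal{K}\cap G^+$ (or on the explicit vertex set of Theorem~\ref{thm:fund_dom_An}) rather than on case-by-case computation.
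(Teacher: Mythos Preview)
Your argument is correct and takes a genuinely different route from the paper.  The paper shows that each bounding hyperplane $H$ of $\mathcal{K}$ still bounds $\mathcal{L}$ by exhibiting at least $n$ explicit vertices of $\mathcal{L}$ lying in $H$ (with a separate count for $H_i$ and for $H_i^0$, and a special check at $n=5$).  You instead locate a \emph{single} vertex $v_L$ of the facet $\mathcal{K}\cap H$ on which $v_0$ is strictly positive, and invoke the elementary fact that slicing an $(n-1)$--dimensional polytope by a closed half-space that contains one of its vertices in its open part leaves the dimension unchanged.  This is more economical: one vertex with a strict inequality replaces a list of $n$ vertices and the implicit check that they are affinely independent.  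Conversely, the paper's proof produces explicit families of vertices of $\mathcal{L}$ on each facet, which is extra information your method does not yield.

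One small comment: the parenthetical ``alternatively this can be read off from $\vertices(\mathcal{L})=\mathcal{L}\cap\vertices(\mathcal{K})$'' for the inclusion step is not a complete argument as stated (knowing only that the vertices of $\mathcal{L}$ are among those of $\mathcal{K}$ does not by itself pin down the facet-hyperplanes of $\mathcal{L}$).  Your primary ``short convexity argument'' --- that every facet of $\mathcal{K}\cap G^+$ lies either in $G$ or in a facet of $\mathcal{K}$ --- is correct and suffices; I would drop the alternative or spell it out.
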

\begin{proof}
It is clear that the hyperplane of the statement bounds $\mathcal{L}$ and that the other facets of $\mathcal{L}$ are contained in bounding hyperplanes for $\K$. Thus, it suffices to show that any bounding hyperplane of $\K$ contains at least $n$ vertices of $\mathcal{L}$, for in this case, these vertices generate a hyperplane that intersects $\K$ (resp. $\mathcal{L}$) in one of its facets, proving the result. To do this, we let $i\in I$ and we have to find at least $n$ vertices of $\mathcal{L}$ inside the hyperplanes $H_i$ and $H_i^0$.

Recalling the notation from the proof of the Theorem \ref{thm:fund_dom_An}, the hyperplane $H_i$ contains the vertices $0$, $\frac12\varpi^\vee_j$ (for $1\le j\le k$ such that $j\ne i$), $\frac13(\varpi^\vee_j+\varpi^\vee_{j'})$ (for $1\le j\le k$ with $j\ne i$ and $k'\le j'\le n$ with $j'\ne i$). Enumerating these subsets, we find that they contain at least $1+(k-1)+k(k-1)=k^2$ points. If $n\ne5$, then $k^2\ge n$ and we are done. Otherwise, we either have $i=3$, in which case the number of the vertices exhibited above is at least $1+2+2^2=7>5$, or there are at least $4$ of the above points, together with the additional $\varpi^\vee_3/2$. In any case, we indeed find at least $n$ vertices of $\mathcal{L}$ inside $H_i$.

Now, for the hyperplane $H_i^0$, we distinguish two cases. If $i\le k$, then $H_i^0$ contains $\frac13(\varpi^\vee_i+\varpi^\vee_j)$ (for $j\ne i$) and the barycenter $\frac{1}{n+1}\sum_j\varpi^\vee_j$, at least $n$ points indeed. Otherwise, we find in $H_i^0$ the points $\frac13(\varpi^\vee_i+\varpi^\vee_j)$ (for $1\le j \le k$), as well as $\frac14(\varpi^\vee_i+\varpi^\vee_j+\varpi^\vee_{j'})$ (for $1\le j\ne j'\le k$) which, together with the barycenter, form at least $k+k(k-1)+1=k^2+1\ge 2k+1\ge n$ vertices, finishing the proof.
\end{proof}

\begin{rem}
The previous proposition fails when $n\le3$. 
In fact, in these cases $v_0 = \alpha_1 - \alpha_n$ so that $(x,v_0) \geq 0$ is equivalent to $(x,\alpha_1) \ge (x,\alpha_n)$.
Then for $n=3$ (resp. $n=2$), two hyperplanes bounding $\K$ can be removed, and we have
\begin{align*}
\mathcal{L}&=\{x \in V~|~\forall i=1,2,3,
~0\le (x,\alpha_i) \le1- (x,\alpha_0) ,~ (x,\alpha_1) \geq (x,\alpha_3)\} \\
&=\{x \in V~|~ (x,\alpha_2) , (x,\alpha_3) \ge0,~ (x,\alpha_0+\alpha_1) , (x,\alpha_0+\alpha_2) \le1,~ (x,v_0) \ge0\},
\\
\left(\text{resp. }\right.\mathcal{L}&=\{x \in V~|~\forall i=1,2,~0\le (x,\alpha_i) \le1- (x,\alpha_0) ,~  (x,\alpha_1) \geq (x,\alpha_2) \} \\
&=\left.\{x \in V~|~ (x,\alpha_2) \ge0,~ (x,\alpha_0+\alpha_1) \le1,~ (x,v_0) \ge0\}\right).
\end{align*}
These simplifications come from the fact that $n=2,3$ are the exact two cases where $\phi_0$ acts as a genuine orthogonal reflection on $V$
and the resulting orthogonal decomposition of any vector in $\mathcal{L}$ allows to see that the two additional hyperplanes are in fact superfluous.
\end{rem}

\begin{rem}
As detailed in the preceding section, the support of the balanced root $v_0$ has to be chosen inside $J$ in order that $\vertices(\mathcal{L}) \subseteq \vertices(\mathcal{K})$. For $A_n$, we chose the support of $v_0$ to be maximal in order that the number of vertices of $\mathcal{L}$ to be minimal:
the bigger the $J_0$, the more restrictive the condition $|L \cap J^+_0| \geq |L \cap J^-_0|$.  For instance, if $J_0 = \{1, n\}$ and $v_0=\alpha_1-\alpha_n$, then the condition amounts to: if $n \in L$ then $1 \in L$.  The corresponding $\mathcal{L}_0$ 
then has a maximal number of vertices, namely
\[|\vertices(\mathcal{L}_0)|=2^n-2^{n-2}=3\cdot2^{n-2}.\]
\end{rem}

\subsection{Type $D_{n\ge4}$}
The highest root is $\alpha_0=\alpha_1+2\alpha_2+\cdots+2\alpha_{n-2}+\alpha_{n-1}+\alpha_n$ so that $J:=\{1,n-1,n\}$ and the Komrakov-Premet polytope has vertices
\begin{equation}
\vertices(\K) =
\left\{\frac{\varpi^\vee_i}{2}\right\}_{i\in I}\cup
\left\{\frac{\varpi^\vee_1+\varpi^\vee_{n-1}}{3},
\frac{\varpi^\vee_1+\varpi^\vee_n}{3},
\frac{\varpi^\vee_{n-1}+\varpi^\vee_n}{3},
\frac{\varpi^\vee_1+\varpi^\vee_{n-1}+\varpi^\vee_n}{4}\right\}
\end{equation}
By Table \ref{table:thm1}, when $n \geq 5$, $\Aut(D)$ is generated by one isometric involution $\phi_0$, which acts as the transposition $(n,n-1)$ of the simple root indices. It completes the fundamental group $\Omega$  in $\Aut(\A)$ in such a way that
\[
\Aut(\A)\simeq I_2(4)
\]
The unique choice for the support of a minuscule balanced root $v_0$ w.r.t.\ $\phi_0$ is $J_0 = \{n, n-1\}$. Let $J^+_0 = \{n-1 \}$, $J^-_0 = \{ n \}$, so that
$$
v_0:= \alpha_{n-1} -\alpha_n
$$
Note that $\phi_0$ is the identity on the hyperplane generated by 
$\alpha_1,\dotsc,\alpha_{n-2},\alpha_{n-1} + \alpha_n$, which is precisely the hyperplane orthogonal to $v_0$. It follows that $\phi_0$ is the orthogonal reflection around $\{ v_0 = 0 \}$. 

By Proposition \ref{prop:u0-slice}, $\mathcal{L}$ is a fundamental polytope for $\Aut(D)=\left<\phi_0\right>$ acting on $\mathcal{K}$. Theorem \ref{theo:vertex} then implies the following.

\begin{prop}\label{thm:fund_dom_Dn}
For $n \geq 5$, $\mathcal{L}$ is a fundamental polytope for the action of $\Aut(\A)$ in $\A$ such that
\begin{eqnarray}
\label{D-vert}
\vertices(\mathcal{L}) &=& \vertices(\K) \cap \mathcal{L} 
= \vertices(\K)\setminus\left\{\frac{\varpi^\vee_n}{2},\frac{\varpi^\vee_1+\varpi^\vee_n}{3}\right\} \nonumber\\
&=&
\left\{\frac{\varpi^\vee_i}{2}\right\}_{i<n}\cup\left\{\frac{\varpi^\vee_1+\varpi^\vee_{n-1}}{3},\frac{\varpi^\vee_{n-1}+\varpi^\vee_n}{3},\frac{\varpi^\vee_1+\varpi^\vee_{n-1}+\varpi^\vee_n}{4}\right\} \nonumber
\end{eqnarray}
In particular, $|\vertices(\mathcal{L})|=|\vertices(\K)|-2$.
\end{prop}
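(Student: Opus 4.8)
The plan is to read off the combinatorial data of type $D_n$ for $n\ge5$ and feed it into Proposition~\ref{prop:u0-slice} and Theorem~\ref{theo:vertex}. From the highest root $\alpha_0=\alpha_1+2\alpha_2+\dots+2\alpha_{n-2}+\alpha_{n-1}+\alpha_n$ one reads that $m_i=2$ for $2\le i\le n-2$ and $m_i=1$ otherwise, so $J=\{1,n-1,n\}$ and in particular $n-1,n$ are minuscule. From Table~\ref{table:thm1}, for $n\ge5$ the group $\Aut(D)$ is the two-element group generated by the involution $\phi_0$ acting on simple-root indices as the transposition $(n-1,n)$, and through Theorem~A this produces $\Aut(\A)\simeq I_2(4)$. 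Since $\phi_0(\alpha_{n-1})=\alpha_n$ and $\phi_0(\alpha_n)=\alpha_{n-1}$, the vector $v_0:=\alpha_{n-1}-\alpha_n$ satisfies $\phi_0(v_0)=-v_0$ and is a balanced minuscule root with support $J_0=\{n-1,n\}\subseteq J$, decomposed as $J^+_0=\{n-1\}$, $J^-_0=\{n\}$.

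The first assertion is then essentially formal. By Proposition~\ref{prop:u0-slice}, $\mathcal{L}=\{x\in\K\mid(v_0,x)\ge0\}$ is a fundamental polytope for $\langle\phi_0\rangle=\Aut(D)$ acting on $\K$. The Komrakov--Premet polytope $\K$ of \eqref{eq:kp-domain} is a fundamental polytope for $\Omega$ acting on $\A$ (see \cite{Ga24}), and $\Aut(D)$ leaves $\K$ invariant since it stabilizes $\Pi$ and $\alpha_0$. Hence, by the transitivity of fundamental domains across the semidirect decomposition $\Aut(\A)=\Omega\rtimes\Aut(D)$ of Theorem~A --- exactly the reduction recorded just after \eqref{eq:kp-domain} --- $\mathcal{L}$ is a fundamental polytope for $\Aut(\A)$ acting on $\A$.

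For the vertices I would simply specialize Theorem~\ref{theo:vertex}, which applies because $J_0\subseteq J$. Plugging in $m_i=2$ for $i\in I\setminus J$ and $J^+_0=\{n-1\}$, $J^-_0=\{n\}$ gives
\[
\vertices(\mathcal{L})=\mathcal{L}\cap\vertices(\K)=\Big\{\tfrac{\varpi^\vee_i}{2}\Big\}_{i\in I\setminus J}\cup\Big\{\tfrac{1}{|L|+1}\sum_{j\in L}\varpi^\vee_j \ \Big|\ L\subseteq\{1,n-1,n\},\ |L\cap\{n-1\}|\ge|L\cap\{n\}|\Big\}.
\]
The condition $|L\cap\{n-1\}|\ge|L\cap\{n\}|$ says precisely that $n\in L$ forces $n-1\in L$; among the eight subsets of $\{1,n-1,n\}$ only $L=\{n\}$ and $L=\{1,n\}$ violate it, and these index exactly the vertices $\varpi^\vee_n/2$ and $(\varpi^\vee_1+\varpi^\vee_n)/3$ of $\K$ from \eqref{eq:kp-domain-vert}. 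Therefore $\vertices(\mathcal{L})=\vertices(\K)\setminus\{\varpi^\vee_n/2,\ (\varpi^\vee_1+\varpi^\vee_n)/3\}$, which rearranges into the displayed list, and in particular $|\vertices(\mathcal{L})|=|\vertices(\K)|-2$.

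There is no substantive obstacle here, since every nontrivial ingredient already lives in Proposition~\ref{prop:u0-slice} and Theorem~\ref{theo:vertex}; the proof is essentially bookkeeping. The only points deserving a little care are: (i) extracting from Table~\ref{table:thm1} that $\Aut(D)=\langle\phi_0\rangle$ with $\phi_0=(n-1,n)$ for every $n\ge5$, and checking this is compatible with $\Aut(\A)\simeq I_2(4)$, i.e. the dihedral group of order $8$, which matches $|\Omega|\cdot|\Aut(D)|=8$ whether $n$ is even ($\Omega\cong\Z_2\times\Z_2$) or odd ($\Omega\cong\Z_4$); (ii) confirming $J_0=\{n-1,n\}\subseteq J$ so that Theorem~\ref{theo:vertex} is applicable; and (iii) observing that $D_4$ is genuinely excluded, since there $\Aut(D)\cong S_3$ is not cyclic of order two and one must instead slice $\K$ with the two balanced roots $\alpha_1-\alpha_3$, $\alpha_3-\alpha_4$ of Theorem~B.
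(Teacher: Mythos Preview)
Your proof is correct and follows exactly the paper's approach: the paper's own proof is a single sentence (``The condition $|L\cap J_0^+|\ge|L\cap J_0^-|$ amounts to: if $n\in L$ then $n-1\in L$''), relying on the preceding text to set up the $D_n$ data and on Proposition~\ref{prop:u0-slice} and Theorem~\ref{theo:vertex} to do the work. You have simply written out all of that setup explicitly, which is fine and arguably clearer.
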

\begin{proof}
The condition $|L \cap J^+_0| \geq |L \cap J^-_0|$ amounts to: if $n \in L$ then $n-1 \in L$.
\end{proof}

\begin{prop}\label{prop:bounding_Fp_Dn}
When $n \geq 5$, the bounding hyperplanes of $\mathcal{L}$ are precisely the $H_i$'s for $i\ne n-1$, the $H_j^0$'s with $j\in\{1,n-1\}$, together with $\{x \in V~|~(x,v_0)=0\}$.
\end{prop}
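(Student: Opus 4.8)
\emph{Proof proposal.} The plan is to follow the strategy of Proposition~\ref{prop:boundings}, with the new feature that — unlike in type $A$ — two of the bounding hyperplanes of $\K$ fail to bound $\mathcal{L}$. Recall that $v_0=\alpha_{n-1}-\alpha_n$, with balanced support $J_0=\{n-1,n\}\subseteq J=\{1,n-1,n\}$, and that $\mathcal{L}=\K\cap\{(x,v_0)\ge 0\}$ by Proposition~\ref{prop:u0-slice}. First I would record the reduction: a facet of $\mathcal{L}$ either lies on the slicing hyperplane $\{(x,v_0)=0\}$, or, being an $(n-1)$-dimensional subset of $\partial\K$, is contained in a facet of $\K$; hence, by Lemma~\ref{lem:boundings_F}, every bounding hyperplane of $\mathcal{L}$ is either $\{(x,v_0)=0\}$ or one of the $H_i$ $(i\in I)$, $H_j^0$ $(j\in\{1,n-1,n\})$. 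It then remains to decide which of these $n+4$ hyperplanes meet $\mathcal{L}$ in codimension one.

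That $\{(x,v_0)=0\}$ bounds $\mathcal{L}$ is clear, since $\K$ has vertices strictly on both sides of it: $(v_0,\varpi^\vee_{n-1}/2)=\tfrac12>0$ and $(v_0,\varpi^\vee_n/2)=-\tfrac12<0$, so $\K\cap\{(x,v_0)=0\}$ is $(n-1)$-dimensional. For a bounding hyperplane $H$ of $\K$ with corresponding facet $G=\K\cap H$, the intersection $G\cap\{(x,v_0)\ge 0\}$ is $(n-1)$-dimensional — equivalently, $H$ bounds $\mathcal{L}$ — exactly when $G$ contains a vertex of $\K$ with positive $v_0$-value; otherwise $G\cap\{(x,v_0)\ge 0\}\subseteq\{(x,v_0)=0\}$ has dimension at most $n-2$. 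By \eqref{eq:inner-product-vertex}, together with the observation that $J_0\subseteq J$ forces $(v_0,\varpi^\vee_i/m_i)=0$ for every $i\in I\setminus J$, the only vertices of $\K$ with positive $v_0$-value are $\varpi^\vee_{n-1}/2$ (value $\tfrac12$) and $\tfrac13(\varpi^\vee_1+\varpi^\vee_{n-1})$ (value $\tfrac13$).

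The remaining step is a direct incidence check from the definitions of $H_i$ and $H_j^0$: one has $\varpi^\vee_{n-1}/2\in H_i\iff i\ne n-1$ and $\varpi^\vee_{n-1}/2\in H_j^0\iff j=n-1$, while $\tfrac13(\varpi^\vee_1+\varpi^\vee_{n-1})\in H_i\iff i\notin\{1,n-1\}$ and $\tfrac13(\varpi^\vee_1+\varpi^\vee_{n-1})\in H_j^0\iff j\in\{1,n-1\}$. Hence the facets of $\K$ that survive in $\mathcal{L}$ are exactly those on $H_i$ with $i\ne n-1$ and on $H_1^0,H_{n-1}^0$, whereas $H_{n-1}$ and $H_n^0$ contain no vertex of $\K$ of positive $v_0$-value and so do not bound $\mathcal{L}$. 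Adjoining the slicing hyperplane gives precisely the asserted list of $n+2$ bounding hyperplanes of $\mathcal{L}$.

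The point that needs care — and the one I expect to be the main obstacle — is the polytope-theoretic fact invoked in the second paragraph: that a facet $G$ of $\K$ survives as a facet of the half-polytope $\mathcal{L}=\K\cap\{(x,v_0)\ge0\}$ if and only if $G$ meets the open halfspace $\{(x,v_0)>0\}$; this should be argued carefully through supporting hyperplanes and the dimensions of faces of a polytope. Everything else is a short computation. An entirely elementary alternative, directly in the spirit of Proposition~\ref{prop:boundings}, would be to exhibit for each of the $n+2$ claimed hyperplanes a set of $n$ affinely independent vertices of $\mathcal{L}$ contained in it — these are easily read off from Proposition~\ref{thm:fund_dom_Dn}, e.g. $\{0,\varpi^\vee_1/2,\dots,\varpi^\vee_{n-1}/2\}\subseteq H_n\cap\mathcal{L}$ — and to note that $H_{n-1}$ and $H_n^0$ contain only the $n-1$ vertices $\{0\}\cup\{\varpi^\vee_i/2:i\le n-2\}$ and $\{\varpi^\vee_i/2:2\le i\le n-2\}\cup\{\tfrac13(\varpi^\vee_{n-1}+\varpi^\vee_n),\tfrac14(\varpi^\vee_1+\varpi^\vee_{n-1}+\varpi^\vee_n)\}$ of $\mathcal{L}$ respectively, too few to span an $(n-1)$-dimensional face.
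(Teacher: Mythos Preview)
Your proposal is correct, and in fact contains the paper's own argument as its ``entirely elementary alternative'' in the final paragraph: the paper proceeds exactly by exhibiting, for each of the $n+2$ claimed hyperplanes, a set of $n$ vertices of $\mathcal{L}$ lying on it, and by listing the $n-1$ vertices of $\mathcal{L}$ on each of $H_{n-1}$ and $H_n^0$.

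Your \emph{primary} argument, however, takes a genuinely different and tidier route. Instead of producing $n$ vertices per hyperplane, you observe that a facet $G=\K\cap H$ survives in $\mathcal{L}=\K\cap\{v_0\ge 0\}$ precisely when $G$ meets the open halfspace $\{v_0>0\}$, and that this reduces to checking which facets of $\K$ contain one of the \emph{two} vertices $\varpi^\vee_{n-1}/2$, $\tfrac13(\varpi^\vee_1+\varpi^\vee_{n-1})$ of positive $v_0$-value. This is conceptually cleaner and scales better: the incidence check is four lines rather than a case-by-case enumeration. The price is the polytope-theoretic lemma you flag --- that if no vertex of $G$ has positive $v_0$-value then $G\cap\{v_0\ge 0\}$ has dimension $\le n-2$ --- which needs the remark that $G\not\subseteq\{v_0=0\}$ (immediate here since $\{v_0=0\}$ is not a supporting hyperplane of $\K$, or by noting that $\phi_0$ sends the facet on $H_{n-1}$, resp.\ $H_n^0$, to the facet on $H_n$, resp.\ $H_{n-1}^0$, which does contain a positive-value vertex, so the former contains a negative-value vertex). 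The paper's approach avoids this lemma entirely at the cost of more bookkeeping.
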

\begin{proof}
We proceed as in type $A_n$. For $i<n-1$, the intersection $\mathcal{L}\cap H_i$ contains the origin, together with the $n-2$ points $\varpi^\vee_{k}/2$ for $k\notin\{i,n\}$ and $(\varpi^\vee_{n-1}+\varpi^\vee_n)/3$ and $\mathcal{L}\cap H_{n}$ contains all the points $\varpi^\vee_k/2$ for $k\ne n$, along with the origin. Next, if $j=1$, then the $n$ points $\varpi^\vee_1/2$, $(\varpi^\vee_1+\varpi^\vee_{n-1})/3$, $(\varpi^\vee_1+\varpi^\vee_{n-1}+\varpi^\vee_n)/4$ and $\varpi^\vee_k/2$ ($1<k<n-1$) belong to $\mathcal{L}\cap H_1^0$ and similarly for $j=n-1$, replacing the vertex $\varpi^\vee_1/2$ by $\varpi^\vee_{n-1}/2$. Finally, the $n$ points $0$, $\varpi^\vee_k/2$ ($k<n-1$) and $(\varpi^\vee_{n-1}+\varpi^\vee_n)/3$ annihilate $v_0$.

On the other hand, the intersection $\mathcal{L}\cap H_{n-1}$ (resp. $\mathcal{L}\cap H_n^0$) only contains the vertices $0$ and $\varpi^\vee_k/2$ for $k<n-1$ (resp. $(\varpi^\vee_{n-1}+\varpi^\vee_{n})/3$, $(\varpi^\vee_1+\varpi^\vee_{n-1}+\varpi^\vee_{n})/4$ and $\varpi^\vee_k/2$ ($1<k<n-1$), in number $n-1$.
\end{proof}

We now focus on the triality case $n=4$.
By Table \ref{table:thm1}, $\Aut(D)$ is generated by $\phi_0$, which acts as $(4,3)$ and $\phi_1$, which acts as $(3,1)$ on the simple root indices.
It completes the fundamental group $\Omega$ in $\Aut(\A)$ 
in such a way that
\[
\Aut(\A) \simeq \Sym_4
\]
the only case where $\Aut(\A)$ is not of dihedral type. 
The balanced minuscle root w.r.t.\ $\phi_0$ is given by
$$
v_0:=\alpha_3-\alpha_4
$$
and we have a essentially unique choice for a balanced minuscle root w.r.t.\ $\phi_1$, given by
$$
v_1:=\alpha_1-\alpha_3
$$

\begin{theo}\label{thm:fund_dom_D4}
The subset
\[
\mathcal{L}':=\{x\in \K~|~(v_0,x)\ge0,~(v_1,x)\ge0\}
\]
is a fundamental polytope for the action of $\Aut(\A)$ on $\A$, such that
\begin{align*}
\vertices(\mathcal{L}')&=
\vertices(\K) \cap \mathcal{L}'
= \vertices(\K)\setminus\left\{
\frac{\varpi^\vee_3}{2}, \frac{\varpi^\vee_4}{2},
\frac{\varpi^\vee_1+\varpi^\vee_4}{3}, \frac{\varpi^\vee_3+\varpi^\vee_4}{3}
\right\} 
\nonumber\\
&=
\left\{\frac{\varpi^\vee_1}{2},\frac{\varpi^\vee_2}{2},\frac{\varpi^\vee_1+\varpi^\vee_3}{3},\frac{\varpi^\vee_1+\varpi^\vee_3+\varpi^\vee_4}{4}\right\}
\end{align*}
In particular, $|\vertices(\mathcal{L}')|=|\vertices(\K)|-4$.
\end{theo}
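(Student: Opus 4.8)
The strategy is to realize $\mathcal{L}'$ as the intersection of the Komrakov--Premet polytope $\K$ with the closed Weyl chamber of the reflection group generated by $\phi_0$ and $\phi_1$. For $D_4$ one has $\Aut(D)=\langle\phi_0,\phi_1\rangle\cong\Sym_3$ with $\phi_0=(3,4)$ and $\phi_1=(1,3)$ on the simple root indices, and $\Aut(\A)=\Omega\rtimes\langle\phi_0,\phi_1\rangle$ by Theorem~\ref{theo:inversa-A} (so $\Aut(\A)\cong\Sym_4$, cf.\ Table~\ref{table:thm1}). Since $\K$ is a fundamental domain for $\Omega$ on $\A$ \cite{Ga24} and is stabilized by $\Aut(D)$, it suffices, as explained after~\eqref{eq:kp-domain}, to prove that $\mathcal{L}'$ is a fundamental polytope for $\langle\phi_0,\phi_1\rangle$ acting on $\K$; the semidirect-product structure then upgrades this to a fundamental polytope for $\Aut(\A)$ on $\A$ (coverage by first moving into $\K$ by $\Omega$, then into $\mathcal{L}'$ by $\langle\phi_0,\phi_1\rangle$; disjointness because $\zeta\mathcal{L}'\cap\mathcal{L}'\subseteq\omega\K\cap\K$ when $\zeta=\omega g$ with $\omega\in\Omega$).

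First I would check that $\phi_0$ and $\phi_1$ are genuine orthogonal reflections of $V$, namely $\phi_0=s_{v_0}$, $\phi_1=s_{v_1}$ with $v_0=\alpha_3-\alpha_4$, $v_1=\alpha_1-\alpha_3$: indeed $\phi_0$ fixes the hyperplane $\mathrm{span}(\alpha_1,\alpha_2,\alpha_3+\alpha_4)$ and negates $v_0$, and being orthogonal its $(+1)$-eigenspace is $v_0^\perp$; likewise for $\phi_1$. Next, normalizing $(\alpha_i,\alpha_i)=2$ in the simply laced system $D_4$ (so $(\alpha_i,\alpha_j)=-1$ precisely when $i,j$ are adjacent, i.e.\ one is the branch node $2$), a direct computation gives $\|v_0\|^2=\|v_1\|^2=4$ and $(v_0,v_1)=-2$, so $v_0,v_1$ make an angle of $2\pi/3$. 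Thus $\{\pm v_0,\pm v_1,\pm(v_0+v_1)\}$ is an $A_2$ root system with base $\{v_0,v_1\}$, its reflection group is $\langle s_{v_0},s_{v_1}\rangle\cong W(A_2)=\Sym_3$, and by the standard theory of finite reflection groups the closed cone $\mathcal{W}:=\{x\in V\mid(v_0,x)\ge0,\ (v_1,x)\ge0\}$ is a fundamental chamber, hence a fundamental domain for $\Sym_3$ on $V$. As $\K$ is $\Sym_3$-stable, $\mathcal{L}'=\K\cap\mathcal{W}$ is then a fundamental domain for $\Sym_3$ on $\K$; it is closed, convex (hence connected), bounded (it lies in $\A$) and full-dimensional (the $\Sym_3$-stable open set $\mathrm{int}(\K)$ meets the open chamber $\mathrm{int}(\mathcal{W})$), so it is a fundamental polytope.

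For the vertices, the inclusion $\vertices(\K)\cap\mathcal{L}'\subseteq\vertices(\mathcal{L}')$ is immediate (an extreme point of $\K$ lying in $\mathcal{L}'$ is extreme there). For the reverse inclusion I would use that being a vertex is a local property, together with Theorem~\ref{theo:vertex} applied to each single slice; note that $v_0=\alpha_3-\alpha_4$ and $v_1=\alpha_1-\alpha_3$ are each balanced minuscule roots, with supports $\{3,4\}$ and $\{1,3\}$ contained in $J=\{1,3,4\}$. Let $x\in\vertices(\mathcal{L}')$. If $(v_0,x)>0$ and $(v_1,x)>0$, then $\mathcal{L}'$ coincides with $\K$ near $x$, so $x\in\vertices(\K)$. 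If exactly one inequality is tight, say $(v_0,x)=0<(v_1,x)$, then $\mathcal{L}'$ coincides near $x$ with the single slice $\K\cap\{(v_0,\cdot)\ge0\}$, whose vertices lie in $\vertices(\K)$ by Theorem~\ref{theo:vertex}; the case $(v_1,x)=0<(v_0,x)$ is symmetric, using the slice $\K\cap\{(v_1,\cdot)\ge0\}$. Finally, if $(v_0,x)=(v_1,x)=0$, then $x\in\mathrm{span}(v_0,v_1)^\perp$, on which both sliced inequalities are vacuous; writing a point of $\K\cap\mathrm{span}(v_0,v_1)^\perp$ as $c(\varpi^\vee_1+\varpi^\vee_3+\varpi^\vee_4)+d\,\varpi^\vee_2$ and imposing the inequalities defining $\K$ identifies this cross-section as the triangle with vertices $0$, $\varpi^\vee_2/2$ and $(\varpi^\vee_1+\varpi^\vee_3+\varpi^\vee_4)/4$, all three in $\vertices(\K)$; since $x$ is extreme in $\mathcal{L}'$ it is a corner of this triangle, hence in $\vertices(\K)$. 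This gives $\vertices(\mathcal{L}')=\vertices(\K)\cap\mathcal{L}'$, and the explicit list follows by evaluating the two inner products on the vertices~\eqref{eq:kp-domain-vert} of $\K$ via~\eqref{eq:inner-product-vertex}: exactly the four vertices $\varpi^\vee_3/2$, $\varpi^\vee_4/2$, $(\varpi^\vee_1+\varpi^\vee_4)/3$, $(\varpi^\vee_3+\varpi^\vee_4)/3$ fail one of $(v_0,\cdot)\ge0$, $(v_1,\cdot)\ge0$.

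The main obstacle is precisely the two-slice vertex count: Theorem~\ref{theo:vertex} handles a single balanced minuscule slice of $\K$, so the doubly-sliced case must be reduced to it, and the codimension-two corner $(v_0,x)=(v_1,x)=0$ — not covered by Theorem~\ref{theo:vertex} — has to be dealt with by the explicit triangular cross-section $\K\cap\mathrm{span}(v_0,v_1)^\perp$. Everything else (the reflection identities, the $A_2$-angle computation, and the semidirect-product gluing) is routine, and the low rank of $D_4$ keeps all computations short; in particular the three mirrors of $\langle\phi_0,\phi_1\rangle$ are $\{v_0=0\}$, $\{v_1=0\}$ and $\{v_0+v_1=0\}$, with $v_0+v_1=\alpha_1-\alpha_4$.
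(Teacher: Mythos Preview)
Your argument is correct and takes a genuinely different route from the paper's, in both halves.

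For the fundamental-domain part, the paper simply asserts that the two equalities $\phi_0(v_0)=-v_0$ and $\phi_1(v_1)=-v_1$ ``ensure that $\mathcal{L}'$ is a fundamental polytope for $\langle\phi_0,\phi_1\rangle$''. You make this rigorous by observing that $\phi_0,\phi_1$ are genuine hyperplane reflections $s_{v_0},s_{v_1}$, computing that $\{v_0,v_1\}$ is a base of type $A_2$, and then invoking the standard chamber geometry of $W(A_2)\cong\Sym_3$; this is both cleaner and more complete than applying Proposition~\ref{prop:u0-slice} twice (which would not be legitimate, since $\phi_1$ does not stabilize the first slice $\mathcal{L}$). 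For the vertex part, the paper proceeds by an exhaustive case analysis on the index set $K$ in a face description~\eqref{eq:def-vert} (filtering by $|K|=0,1,2,3$ and chasing down each subcase), ultimately showing $\vertices(\mathcal{L}')\subseteq\vertices(\mathcal{L})$ and then invoking the single-slice result. Your argument is more conceptual: you localize near a vertex $x$ and split according to which of the two new inequalities is tight, reducing the single-tight cases to Theorem~\ref{theo:vertex} applied once with $v_0$ and once with $v_1$ (both balanced minuscule, supports $\{3,4\}$ and $\{1,3\}$ in $J$), and handling the doubly-tight case by computing the $2$-dimensional cross-section $\K\cap\{v_0=v_1=0\}$ explicitly as a triangle whose three corners are already vertices of~$\K$. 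This bypasses the paper's lengthy case-by-case and makes transparent why the second slice introduces no new vertices. The trade-off is that the paper's method would extend mechanically to more slicing hyperplanes, whereas your cross-section computation relies on the codimension being only~$2$; in rank~$4$ this is no loss.
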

\begin{proof}
Note that $\mathcal{L}' = \{x\in \mathcal{L}~|~(v_1,x)\ge0\}$.
The equalities $\phi_0(v_0)=-v_0$ and $\phi_1'(v_1)=-v_1$ ensure that $\mathcal{L}'$ is indeed a fundamental polytope for $\left<\phi_0,\phi_1\right> = \Aut(D)$. It only remains to prove that $\vertices(\mathcal{L}')\subseteq\vertices(\K)$.
By Theorem \ref{thm:fund_dom_Dn}, we already know that any vertex of $\mathcal{L}$ is a vertex of $\K$, so it suffices to see that $\vertices(\mathcal{L}')\subseteq\vertices(\mathcal{L})$. 
We let
\[
H_0^0:=\{x \in V~|~(x,v_0)=0\} \qquad 
H^1_0:=\{x \in V~|~(x,v_1)=0\}.
\]
If $\mu\in\vertices(\mathcal{L}')$, then there are $K\subseteq\{1,2,3,4\}$, $L\subseteq\{0,1,3,4\}$ with $|K|+|L|=3$ and
\[\{\mu\}=\mathcal{L}'\cap\left(H_0^1\cap\bigcap_{i\in K}H_i\cap\bigcap_{j\in L}H_j^0\right).\]
We indeed may assume that $\mu\in H^1_0$, because otherwise we'd already have $\mu\in\vertices(\mathcal{L})$. 
We may then write $\mu$ as
\[
\mu=\mu_1(\varpi^\vee_1+\varpi^\vee_3)+\mu_2\varpi^\vee_2+\mu_4\varpi^\vee_4,
\]
with $\mu_4=0$ as soon as $\mu_1=0$. We reason by filtering the possible size of $K$.
\begin{itemize}[label=$\bullet$]
\item If $K=\emptyset$, then either $\mu_1=\mu_4$ or $L=\{1,3,4\}$. In the latter case, we have $3\mu_1+2\mu_2+\mu_4=2\mu_1+2\mu_2+2\mu_4=1$, so $\mu_1=\mu_4$ in any case and thus $(\mu,v_0)=0$ and then $\mu\in\vertices(\mathcal{L})$.
\item If $K=\{1\}$ (resp. $K=\{3\}$), then $\mu\in H_3$ (resp. $\mu\in H_1$), so $\mu\in\vertices(\mathcal{L})$.

When $K=\{4\}$, we may assume that either $1\in L$ or $3\in L$ (otherwise, $4\in L\cap K$, so $\left<\mu,\alpha_0\right>=1$ and we find an additional hyperplane $H_j^0$ containing $\mu$). In both cases, we find $\mu_1=\mu_3=1$ and thus $L=\{1,3\}$. Since $\mu$ lies on an edge of $\mathcal{L}$, this means that it belongs in fact to a segment joining two points of $\mathcal{L}'$, because one of them must feature $\varpi^\vee_2$ and the only such vertex of $\mathcal{L}$ is $\varpi^\vee_2/2\in \mathcal{L}'$ and then, the other vertex has no choice but to be $(\varpi^\vee_1+\varpi^\vee_3)/3$. Because $\mu$ is extremal in $\mathcal{L}'$, it must be one of the endpoints of this segment, hence a vertex of $\mathcal{L}$.

If $K=\{2\}$, then either $0\in L$ i.e. $\mu_1=\mu_4$, so $4\mu_1=1$ and we're done, or $L$ contains at least $1$ or $3$ and thus $L=\{1,3\}$ again. An edge of $\mathcal{L}$ containing $\mu$ must have endpoints belonging to the set 
\[\left\{\frac{\varpi^\vee_1+\varpi^\vee_3}{3},\frac{\varpi^\vee_3+\varpi^\vee_4}{3},\frac{\varpi^\vee_1+\varpi^\vee_3+\varpi^\vee_4}{4}\right\}.\]
Since the first and third point belong to $\mathcal{L}'$, if these are the endpoints, we may conclude as above. So it remains to see what happens if the second point is an endpoint of the aforementioned segment. If the other one is $(\varpi^\vee_1+\varpi^\vee_3)/3$ (resp. $(\varpi^\vee_1+\varpi^\vee_3+\varpi^\vee_4)/4$), then $\mu_3=1/3=\mu_1$ (resp. $\mu_1=1/4=\mu_3$) and so $\mu$ itself is an endpoint, different from $(\varpi^\vee_3+\varpi^\vee_4)/3$, so $\mu\in\vertices(\mathcal{L})$.
\item Now, if $|K|=2$ then either $\mu_1=0$, in which case $\mu$ is on an edge of $\mathcal{L}$ whose at least one endpoint is $\varpi^\vee_2/2$, the other one must only feature $\varpi^\vee_4$, so $\mu_4=0$ and $\mu=\varpi^\vee_2/2\in\vertices(\mathcal{L})$. Otherwise, we have $K=\{2,4\}$, so we may assume that $1\in L$ or $3\in L$, so $\mu_1=1/3$ and again, $\mu\in\vertices(\mathcal{L})$.
\item Finally, the equality $|K|=3$ forces $\mu=0\in\vertices(\mathcal{L})$.
\end{itemize}
Finally, the conditions $|L \cap J^+_0| \geq |L \cap J^-_0|$ for the support of $v_0$ and $v_1$ amounts to: if $4 \in L$ then $3 \in L$ and if $3 \in L$ then $1 \in L$.
\end{proof}

\begin{prop}\label{prop:bounding_Fp_D4}
  The bounding hyperplanes of the fundamental polytope $\mathcal{L}'$ of
  {\rm Theorem~\ref{thm:fund_dom_D4}} are $H_2,H_4,H_1^0$ together with $\{x \in V~|~(x,v_0)=0\}$ and $\{x \in V~|~(x,v_1)=0\}$.
\end{prop}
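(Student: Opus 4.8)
The plan is to use that, by Theorem~\ref{thm:fund_dom_D4}, the polytope $\mathcal{L}'$ has exactly the five vertices
\[
p_0=0,\quad p_1=\tfrac12\varpi^\vee_1,\quad p_2=\tfrac12\varpi^\vee_2,\quad p_3=\tfrac13(\varpi^\vee_1+\varpi^\vee_3),\quad p_4=\tfrac14(\varpi^\vee_1+\varpi^\vee_3+\varpi^\vee_4),
\]
and that they are in general position: since $\varpi^\vee_1,\dots,\varpi^\vee_4$ is a basis of $V$, the vectors $p_1,\dots,p_4$ are linearly independent, so $p_0,\dots,p_4$ are affinely independent. Hence $\mathcal{L}'$ is a genuine $4$-simplex, and its bounding hyperplanes are precisely the five affine hyperplanes $H^{(i)}$ spanned by the quadruples $\{p_j\mid j\ne i\}$, $i=0,\dots,4$ (each such quadruple being affinely independent, its affine span is a hyperplane of the $4$-dimensional space $V$). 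It therefore suffices to check that the five hyperplanes listed in the statement are pairwise distinct, that each contains exactly four of the $p_j$ (a different one being omitted in each case), and that $\mathcal{L}'$ lies in one of the two closed half-spaces bounded by each.

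The verification is a short computation using $(\varpi^\vee_i,\alpha_j)=\delta_{ij}$ and the $D_4$ highest root $\alpha_0=\alpha_1+2\alpha_2+\alpha_3+\alpha_4$. Evaluating the relevant linear forms on $p_0,\dots,p_4$ one finds that $(\alpha_2,\cdot)$ vanishes on every $p_j$ except $p_2$ (value $\tfrac12$); $(\alpha_4,\cdot)$ on every one except $p_4$ (value $\tfrac14$); $(\alpha_0+\alpha_1,\cdot)$ equals $1$ on every one except $p_0$ (value $0$); $(v_0,\cdot)=(\alpha_3-\alpha_4,\cdot)$ vanishes on every one except $p_3$ (value $\tfrac13$); and $(v_1,\cdot)=(\alpha_1-\alpha_3,\cdot)$ vanishes on every one except $p_1$ (value $\tfrac12$). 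Thus $H_2,H_4,H_1^0,\{(v_0,x)=0\},\{(v_1,x)=0\}$ are pairwise distinct and coincide, respectively, with the facet hyperplanes $H^{(2)},H^{(4)},H^{(0)},H^{(3)},H^{(1)}$ of the simplex $\mathcal{L}'$. The half-space conditions are immediate: $\mathcal{L}'\subseteq\K\subseteq\A$ forces $(\alpha_2,x)\ge0$ and $(\alpha_4,x)\ge0$ on $\mathcal{L}'$; $\mathcal{L}'\subseteq\K$ together with $\alpha_1$ being minuscule forces $(\alpha_0+\alpha_1,x)\le1$; and $(v_0,x)\ge0$, $(v_1,x)\ge0$ hold on $\mathcal{L}'$ by its very definition. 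Since a $4$-simplex has exactly five bounding hyperplanes, the list is complete.

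I do not expect a genuine obstacle here: granting Theorem~\ref{thm:fund_dom_D4}, the statement reduces to the finite inner-product check above plus the elementary fact that a $4$-simplex has exactly five facets. The one point warranting a line of care is the affine independence of $p_0,\dots,p_4$, which is what legitimizes calling $\mathcal{L}'$ a simplex and thereby rules out any bounding hyperplane beyond the five exhibited; but this is transparent from $\{\varpi^\vee_i\}$ being a basis of $V$. As an alternative, one could argue in the style of Propositions~\ref{prop:boundings} and~\ref{prop:bounding_Fp_Dn}, checking that among the bounding hyperplanes $H_i$, $H_j^0$ of $\K$ exactly $H_2$, $H_4$ and $H_1^0$ survive the two slices $\{(v_0,x)=0\}$ and $\{(v_1,x)=0\}$ while still meeting $\mathcal{L}'$ in codimension one; this is also feasible, but the simplex argument is cleaner in this four-dimensional case.
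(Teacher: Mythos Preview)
Your argument is correct. The core vertex computations are exactly those in the paper's proof, but the way you finish is slightly different and in fact cleaner: you observe that the five vertices $p_0,\dots,p_4$ are affinely independent, so $\mathcal{L}'$ is a $4$-simplex and therefore has \emph{exactly} five facet hyperplanes, which must then be the five you exhibited. The paper instead argues in the style you flag as an alternative: since $\mathcal{L}'$ is cut out from $\K$ by the two slicing half-spaces, any bounding hyperplane of $\mathcal{L}'$ must be among those of $\K$ together with $\{v_0=0\}$ and $\{v_1=0\}$; it then checks that every remaining bounding hyperplane of $\K$ contains at most three vertices of $\mathcal{L}'$, hence meets it in codimension at least two. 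Your simplex argument avoids this last sweep entirely, at the cost of one extra line verifying affine independence---a good trade in this low-dimensional case.
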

\begin{proof}
The intersection $\mathcal{L}'\cap H_2$ (resp. $\mathcal{L}'\cap H_4$, $\mathcal{L}'\cap H_1^0$) contains $\vertices(\mathcal{L}')\setminus\{\varpi^\vee_2/2\}$ (resp. $\vertices(\mathcal{L}')\setminus\{(\varpi^\vee_1+\varpi^\vee_3+\varpi^\vee_4)/4\}$, $\vertices(\mathcal{L}')\setminus\{0\}$). The only vertices that do not belong to the remaining two intersection are respectively $(\varpi^\vee_1+\varpi^\vee_3)/3$ and $\varpi^\vee_1/2$.

All other hyperplanes bounding $\K$ contain at most three of the vertices of $\mathcal{L}'$, concluding the proof.
\end{proof}

\subsection{Type $E_6$}

Here, the highest root is $\alpha_0=\alpha_1+2\alpha_2+2\alpha_3+3\alpha_4+2\alpha_5+\alpha_6$, so that $J=\{1,6\}$ and the Komrakov--Premet polytope has vertices
\begin{align*}
\vertices(\K) 
&=
\left\{\frac{\varpi^\vee_i}{2}\right\}_{i\ne4}\cup\left\{\frac{\varpi^\vee_4}{3},\frac{\varpi^\vee_1+\varpi^\vee_6}{3}\right\}
\end{align*}
By Table \ref{table:thm1}, $\Aut(D)$ is generated by one isometric involution $\phi_0$, which acts as $(5,3)(6,1)$ of the simple root indices.
It completes the fundamental group $\Omega=\Z/3\Z$ in $\Aut(\A)$ in such a way that
\[
\Aut(\A)\simeq\Sym_3=I_2(3)
\]
The unique choice for the support of a minuscle balanced root $v_0$ w.r.t.\ $\phi_0$ is $J_0 = \{1, 6\}$. Let $J^+_0 = \{1 \}$, $J^-_0 = \{ 6 \}$ so that
$$
v_0:= \alpha_{1} -\alpha_6
$$

By Proposition \ref{prop:u0-slice}, $\mathcal{L}$ is a fundamental polytope for $\Aut(D)=\left<\phi_0\right>$ acting on $\mathcal{K}$. Theorem \ref{theo:vertex} then implies the following.

\begin{prop}
$\mathcal{L}$ is a fundamental polytope for the action of $\Aut(\A)$ in $\A$ such that
\[ \vertices(\mathcal{L}) = \vertices(\mathcal{K}) \cap \mathcal{L} 
  = \vertices(\K)\setminus\left\{\frac{\varpi^\vee_6}{2}\right\}.\]
\end{prop}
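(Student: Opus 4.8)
The plan is to mimic the treatment of types $A_{n\geq 2}$ and $D_{n\geq 5}$ carried out above: one combines the decomposition $\Aut(\A)=\Omega\rtimes\Aut(D)$ of Theorem~\ref{theo:inversa-A}, the slicing construction of Proposition~\ref{prop:u0-slice}, and the vertex description of Theorem~\ref{theo:vertex}. First I would collect the data specific to $E_6$. From Table~\ref{table:thm1}, $\Aut(D)=\langle\phi_0\rangle$, where $\phi_0$ acts on the simple-root indices as the involution $(5,3)(6,1)$; in particular $\phi_0(\alpha_1)=\alpha_6$, whence $v_0=\alpha_1-\alpha_6\neq 0$ and $\phi_0(v_0)=\alpha_6-\alpha_1=-v_0$. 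Since $m_1=m_6=1$ we have $J=\{1,6\}$, and $\phi_0$ restricts to the transposition $1\leftrightarrow 6$ on $J$; therefore the only nonempty $\phi_0$-invariant subset of $J$ with no $\phi_0$-fixed point is $J_0=\{1,6\}$, with $J^+_0=\{1\}$ and $J^-_0=\{6\}$, so that $v_0$ is a balanced minuscule root in the sense of \eqref{eq:def-balanced}.

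Next I would invoke Proposition~\ref{prop:u0-slice} with this $v_0$: since $\phi_0$ is an isometric involution with $\phi_0(v_0)=-v_0$, the set $\mathcal{L}=\{x\in\mathcal{K}\mid (v_0,x)\geq 0\}$ is a fundamental polytope for $\langle\phi_0\rangle=\Aut(D)$ acting on $\mathcal{K}$. As $\Aut(D)$ stabilizes $\Pi$ and $\alpha_0$ it leaves $\mathcal{K}$ invariant, and $\mathcal{K}$ is the Komrakov--Premet fundamental domain for $\Omega$ acting on $\A$; combining this with $\Aut(\A)=\Omega\rtimes\Aut(D)$ (Theorem~\ref{theo:inversa-A}), as observed before the statement of Theorem~\ref{thm:B}, shows that $\mathcal{L}$ is a fundamental polytope for $\Aut(\A)$ acting on $\A$.

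It remains to identify $\vertices(\mathcal{L})$. Because $v_0$ is a balanced minuscule root, Theorem~\ref{theo:vertex} gives $\vertices(\mathcal{L})=\mathcal{L}\cap\vertices(\mathcal{K})$, comprising the vertices $\varpi^\vee_i/m_i$ with $i\in I\setminus J=\{2,3,4,5\}$ (each satisfying $(v_0,\varpi^\vee_i)=0$, hence lying in $\mathcal{L}$) together with the barycentric vertices $\frac{1}{|L|+1}\sum_{j\in L}\varpi^\vee_j$ with $L\subseteq J=\{1,6\}$ and $|L\cap J^+_0|\geq |L\cap J^-_0|$. By \eqref{eq:inner-product-vertex} this last inequality is precisely $\big(v_0,\sum_{j\in L}\varpi^\vee_j\big)\geq 0$; over the four subsets $L\subseteq\{1,6\}$ it holds for $L\in\{\emptyset,\{1\},\{1,6\}\}$ and fails exactly for $L=\{6\}$, whose corresponding vertex is $\varpi^\vee_6/2$. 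Hence $\vertices(\mathcal{L})=\vertices(\mathcal{K})\setminus\{\varpi^\vee_6/2\}$, as asserted.

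The only genuine bookkeeping --- and thus the (mild) main obstacle --- is checking that the balanced support $J_0=\{1,6\}$ is forced and lies inside $J$, so that Theorem~\ref{theo:vertex} applies and the vertices of $\mathcal{L}$ stay among those of $\mathcal{K}$, and then tracking which single barycentric vertex the slicing removes. Everything else is a direct application of results already established in this article.
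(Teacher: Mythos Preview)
Your proposal is correct and follows essentially the same approach as the paper: set up the $E_6$ data, apply Proposition~\ref{prop:u0-slice} and the decomposition $\Aut(\A)=\Omega\rtimes\Aut(D)$ to see that $\mathcal{L}$ is a fundamental polytope, and then invoke Theorem~\ref{theo:vertex} to identify its vertices. The paper's own proof is simply the one-line observation that the condition $|L\cap J_0^+|\geq|L\cap J_0^-|$ amounts to ``if $6\in L$ then $1\in L$'', which is exactly your case-check over the four subsets $L\subseteq\{1,6\}$, written more tersely.
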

\begin{proof}
The condition $|L \cap J^+_0| \geq |L \cap J^-_0|$ amounts to: if $6 \in L$ then $1 \in L$.
\end{proof}

\begin{rem}\label{rem:minuscule_needed_E6}
For $v_0$ such that $\phi_0(v_0) = -v_0$ we could also have taken $v_0=\alpha_3-\alpha_5$, or even $v_0=\alpha_1+\alpha_3-\alpha_5-\alpha_6$, but in these cases the inclusion $\vertices(\mathcal{L})\subseteq \vertices(\K)$ no longer holds.
Indeed, if for instance $v_0=\alpha_3-\alpha_5$, then the point
\[\frac14(\varpi^\vee_3+\varpi^\vee_5)\in \bigcap_{i=1,2,4,6}H_i\cap \bigcap_{j=0,1,6}H_j^0\]
is the only vertex of $\mathcal{L}$ which is not a vertex of $\mathcal{K}$. If $v_0=\alpha_1+\alpha_3-\alpha_5-\alpha_6$, there are three of them: $(\varpi^\vee_3+\varpi^\vee_5)/4$, $(\varpi^\vee_3+\varpi^\vee_6)/4$ and $(\varpi^\vee_1+\varpi^\vee_5)/4$.
\end{rem}

\begin{prop}\label{prop:bounding_Fp_E6}
The bounding hyperplanes for $\mathcal{L}$ are precisely $H_0^0$ together with those of $\K$, except $H_1$ and $H_6^0$.
\end{prop}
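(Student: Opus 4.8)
The plan is to proceed exactly as in the proofs of Propositions~\ref{prop:boundings}, \ref{prop:bounding_Fp_Dn} and~\ref{prop:bounding_Fp_D4}. Recall that $\mathcal{L}=\mathcal{K}\cap\{(v_0,x)\geq 0\}$ with $v_0=\alpha_1-\alpha_6$, and that by Lemma~\ref{lem:boundings_F} (applicable since $\Omega=\Z/3\Z\ne\{1\}$) the bounding hyperplanes of $\mathcal{K}$ are $H_1,\dots,H_6$ together with $H_1^0$ and $H_6^0$, while $H_0$ does \emph{not} bound $\mathcal{K}$ and hence cannot bound $\mathcal{L}\subseteq\mathcal{K}$. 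Since $\mathcal{L}$ is obtained from $\mathcal{K}$ by intersection with the closed half-space $\{(v_0,x)\geq 0\}$, every facet of $\mathcal{L}$ lies either on $H_0^0=\{(v_0,x)=0\}$ or on one of the bounding hyperplanes of $\mathcal{K}$ just listed. As $\mathcal{L}$ is full-dimensional, such a hyperplane $H$ bounds $\mathcal{L}$ precisely when $H\cap\mathcal{L}$ contains $n=6$ affinely independent vertices of $\mathcal{L}$, and by the preceding proposition
\[\vertices(\mathcal{L})=\Big\{0,\ \tfrac{\varpi^\vee_1}{2},\ \tfrac{\varpi^\vee_2}{2},\ \tfrac{\varpi^\vee_3}{2},\ \tfrac{\varpi^\vee_4}{3},\ \tfrac{\varpi^\vee_5}{2},\ \tfrac{\varpi^\vee_1+\varpi^\vee_6}{3}\Big\}.\]

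Next I would run through the candidate hyperplanes using only $(\varpi^\vee_i,\alpha_j)=\delta_{ij}$ and $(\varpi^\vee_i,\alpha_0)=m_i$ with $(m_1,\dots,m_6)=(1,2,2,3,2,1)$. One finds that $H_1\cap\mathcal{L}$ consists of the five vertices $0,\tfrac{\varpi^\vee_2}{2},\tfrac{\varpi^\vee_3}{2},\tfrac{\varpi^\vee_4}{3},\tfrac{\varpi^\vee_5}{2}$, and that $H_6^0\cap\mathcal{L}$ consists of the five vertices $\tfrac{\varpi^\vee_2}{2},\tfrac{\varpi^\vee_3}{2},\tfrac{\varpi^\vee_4}{3},\tfrac{\varpi^\vee_5}{2},\tfrac{\varpi^\vee_1+\varpi^\vee_6}{3}$ — in each case the sixth vertex of $\mathcal{K}$ on that hyperplane being $\varpi^\vee_6/2$, precisely the vertex discarded in passing from $\mathcal{K}$ to $\mathcal{L}$; hence $H_1$ and $H_6^0$ do not bound $\mathcal{L}$. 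On the other hand, for $i\in\{2,3,4,5\}$ the hyperplane $H_i$ contains every vertex of $\mathcal{L}$ except $\varpi^\vee_i/m_i$; the hyperplane $H_6$ contains $0,\tfrac{\varpi^\vee_1}{2},\tfrac{\varpi^\vee_2}{2},\tfrac{\varpi^\vee_3}{2},\tfrac{\varpi^\vee_4}{3},\tfrac{\varpi^\vee_5}{2}$; the hyperplane $H_1^0$ contains $\tfrac{\varpi^\vee_1}{2},\tfrac{\varpi^\vee_2}{2},\tfrac{\varpi^\vee_3}{2},\tfrac{\varpi^\vee_4}{3},\tfrac{\varpi^\vee_5}{2},\tfrac{\varpi^\vee_1+\varpi^\vee_6}{3}$; and $H_0^0$ contains $0,\tfrac{\varpi^\vee_2}{2},\tfrac{\varpi^\vee_3}{2},\tfrac{\varpi^\vee_4}{3},\tfrac{\varpi^\vee_5}{2},\tfrac{\varpi^\vee_1+\varpi^\vee_6}{3}$ — six vertices of $\mathcal{L}$ in each case.

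To finish, I would check affine independence of the six vertices exhibited on each of $H_0^0,H_2,H_3,H_4,H_5,H_6,H_1^0$. When the list contains $0$, the remaining five vertices are positive multiples of vectors drawn from $\{\varpi^\vee_2,\varpi^\vee_3,\varpi^\vee_4,\varpi^\vee_5\}$ together with one of $\varpi^\vee_1$ or $\varpi^\vee_1+\varpi^\vee_6$, hence are linearly independent; for $H_1^0$, which misses $0$, subtracting $\tfrac{\varpi^\vee_1}{2}$ from the other five yields vectors whose ``new'' supports still exhaust $\varpi^\vee_2,\varpi^\vee_3,\varpi^\vee_4,\varpi^\vee_5,\varpi^\vee_6$, again independent. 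Thus the bounding hyperplanes of $\mathcal{L}$ are exactly $H_0^0,H_2,H_3,H_4,H_5,H_6$ and $H_1^0$, i.e.\ $H_0^0$ together with the bounding hyperplanes of $\mathcal{K}$ save $H_1$ and $H_6^0$; as a by-product $\mathcal{L}$ turns out to be a $6$-simplex (seven vertices, seven facets).

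I do not expect a genuine obstacle: the argument is the verbatim analogue of the $A_n$ and $D_n$ cases and reduces to bookkeeping with the pairings $(\varpi^\vee_i,\alpha_j)$ and $(\varpi^\vee_i,\alpha_0)$. The only mildly delicate points are the affine-independence verification for the facet $H_1^0$, which does not pass through the origin, and the (standard) fact — already invoked in the preceding propositions — that a supporting hyperplane of a full-dimensional polytope is facet-defining if and only if it carries $n$ affinely independent vertices.
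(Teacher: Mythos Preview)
Your proposal is correct and follows essentially the same approach as the paper: both arguments enumerate, for each candidate hyperplane, which of the seven vertices of $\mathcal{L}$ it contains, concluding that $H_1$ and $H_6^0$ carry only five while $H_0^0$, $H_1^0$ and $H_i$ ($i>1$) each carry six. You are slightly more explicit than the paper in listing the vertices on each hyperplane and in checking affine independence (the paper tacitly relies on the seven vertices forming a simplex), but the strategy is the same.
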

\begin{proof}
For $i>1$, the only vertex of $\mathcal{L}$ not belonging to $H_i$ is the one involving only $\varpi^\vee_i$, and there are indeed six of them. Similarly, the hyperplane $H_1^0$ contains all vertices of $\mathcal{L}$ except $0$.
Now, the intersection $\mathcal{L}\cap H_1$ (resp. $\mathcal{L}\cap H_6^0$) does not contain the vertices $\varpi^\vee_1/2$ and $(\varpi^\vee_1+\varpi^\vee_6)/3$ (resp. $0$ and $\varpi^\vee_1/2$).
Finally, the intersection $\mathcal{L}\cap H_0^0$ contains all the vertices of $\mathcal{L}$ except $\varpi^\vee_1/2$.
\end{proof}

Collecting the results of this section we get Theorem B of the introduction.

\appendix
\section{Stratified centralizers}

Here we discuss some related negative results on stratified centralizers and equivariant triangulations.  

Given a subgroup $G \subseteq \E(V)$ denote the centralizer of $x \in V$ by
$$
G_x = \{ g \in G \mid gx = x \}
$$
In many instances, one wishes to know how $G_x$ varies when $x$ varies within a fundamental domain of $G$  (see Remark \ref{rem:centraliz}).  
Suppose that $G$ has a fundamental domain given by a polytope $\mathcal{F}$, which is stratified by its $k$-dimensional faces, $k=0,1,\ldots,n$.
The {\it regular face} of $x \in \mathcal{F}$ is the intersection of all the faces of $\mathcal{F}$ containing $x$. Note that the regular face of a
vertex of $\mathcal{F}$ is the vertex itself.

\begin{definition}\label{def:stratified_centralizers}
We say that $G$ has {\it stratified centralizers} w.r.t.\ the fundamental polytope $\mathcal{F}$ if $G_x = G_y$ whenever $x, y \in \mathcal{F}$ have the same regular face.
\end{definition}

This is well known to hold for the affine Weyl group $W_{\mathrm{aff}}$ w.r.t.\ $\A$. Indeed, $(W_{\mathrm{aff}})_x$ is generated by the reflections in the top dimensional faces of $\A$ containing the regular face of $x \in \A$ (see Proposition V.\S 3.1(vii) p.80 of \cite{Bo68}). 

What about the extended Weyl group $W_{\mathrm{ext}}$? For the Komrakov--Premet domain $\mathcal{K}$ we will see, by presenting counterexamples, that this is not always the case. We start with some preparatory remarks. Recall from Section 1 that
\begin{equation}\label{eq:wext-w-component} 
W_{\mathrm{aff}} = Q^\vee \rtimes W \qquad 
W_{\mathrm{ext}} := P^\vee  \rtimes W = \Omega \rtimes W_{\mathrm{aff}} 
\end{equation}

\begin{prop}
\label{prop:centraliz-ext}
For $x \in \A$, we have
\begin{equation}
(W_{\mathrm{ext}})_x = \Omega_x \rtimes (W_{\mathrm{aff}})_x.
\end{equation}
\end{prop}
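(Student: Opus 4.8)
The plan is to combine the decomposition $W_{\mathrm{ext}} = W_{\mathrm{aff}} \rtimes \Omega$ from \eqref{eq:decompos-ext-omega} with the classical fact that the closed alcove $\A$ is a \emph{strict} fundamental domain for $W_{\mathrm{aff}}$, in the sense that each $W_{\mathrm{aff}}$-orbit meets $\A$ in exactly one point (Ch.~V, \S3 of \cite{Bo68}); this is a strengthening of the weak notion of fundamental domain used in the introduction.

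First I would prove the set-theoretic identity $(W_{\mathrm{ext}})_x = (W_{\mathrm{aff}})_x \cdot \Omega_x$. The inclusion ``$\supseteq$'' is trivial. For ``$\subseteq$'', take $g \in (W_{\mathrm{ext}})_x$ and write it, using $W_{\mathrm{ext}} = W_{\mathrm{aff}}\rtimes\Omega$, as $g = w\omega$ with $w \in W_{\mathrm{aff}}$ and $\omega \in \Omega$ (the decomposition is unique since $W_{\mathrm{aff}}\cap\Omega=\{1\}$). From $gx = x$ we get $\omega x = w^{-1}x$. On one hand $\omega$ stabilizes $\A$ by the definition \eqref{eq:omega} of $\Omega$, so $\omega x \in \A$; on the other hand $\omega x = w^{-1}x$ lies in the $W_{\mathrm{aff}}$-orbit of $x$. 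Since $x \in \A$ as well, the strict fundamental domain property forces $\omega x = x$, hence $\omega \in \Omega_x$, and then $w^{-1}x = \omega x = x$, so $w \in (W_{\mathrm{aff}})_x$. Thus $g \in (W_{\mathrm{aff}})_x\,\Omega_x$.

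It then remains to upgrade this identity to an internal semidirect product with normal factor $(W_{\mathrm{aff}})_x$. That factor is normal in $(W_{\mathrm{ext}})_x$ because $(W_{\mathrm{aff}})_x = W_{\mathrm{aff}}\cap(W_{\mathrm{ext}})_x$ and $W_{\mathrm{aff}}$ is normal in $W_{\mathrm{ext}}$; moreover $(W_{\mathrm{aff}})_x\cap\Omega_x \subseteq W_{\mathrm{aff}}\cap\Omega = \{1\}$, and $\Omega_x$ is plainly a subgroup of $(W_{\mathrm{ext}})_x$. Combined with the product identity of the previous paragraph this yields $(W_{\mathrm{ext}})_x = \Omega_x\rtimes(W_{\mathrm{aff}})_x$, as asserted.

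I do not anticipate a genuine obstacle: the only non-formal ingredient is the strict fundamental domain property of $\A$, and the rest is elementary group theory with the available semidirect decompositions. The one point requiring care is to identify $\omega x$ (which lies in $\A$) with $w^{-1}x$ (which lies in the $W_{\mathrm{aff}}$-orbit of $x$) as the \emph{same} point of $\A$ in the \emph{same} $W_{\mathrm{aff}}$-orbit before invoking uniqueness.
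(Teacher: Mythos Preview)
Your argument is correct and is essentially the same as the paper's: both write an element of $(W_{\mathrm{ext}})_x$ according to the semidirect decomposition $W_{\mathrm{ext}}=W_{\mathrm{aff}}\rtimes\Omega$, observe that the $\Omega$-factor keeps $x$ inside $\A$ while the $W_{\mathrm{aff}}$-factor moves it within its $W_{\mathrm{aff}}$-orbit, and then invoke the strict fundamental domain property of $\A$ (Proposition V.\S3.2 of \cite{Bo68}) to force both factors to fix $x$. Your version is slightly more explicit in verifying the internal semidirect product structure (normality of $(W_{\mathrm{aff}})_x$ and trivial intersection), which the paper leaves implicit.
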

\begin{proof}
By \eqref{eq:wext-w-component}, $\zeta \in W_{\mathrm{ext}}$ decomposes as $\zeta = \omega \eta$, for $\omega \in \Omega$ and $\eta \in W_{\mathrm{aff}}$. Since $x \in \A$, 
$\zeta x = x$ implies that $\eta x = \omega^{-1} x \in \A$ so that, $\eta x = x$ (see Proposition V.\S 3.2(I) p.80 of \cite{Bo68}). It follows that $\omega x = x$, which proves our claim.
\end{proof}

Since $W_{\mathrm{aff}}$ has stratified centralizers w.r.t.\ $\A$ and since the regular face of $x \in \K$ is contained in the regular face of $x \in \A$ (which is not necessarily of the same dimension), it follows that $W_{\mathrm{aff}}$ has stratified centralizers w.r.t.\ $\K$. Proposition \ref{prop:centraliz-ext} then implies the following.

\begin{cor}
\label{cor:wext}
The extended Weyl group $W_{\mathrm{ext}}$ has stratified centralizers w.r.t.\ $\K$ if and only if its fundamental group $\Omega$ does.
\end{cor}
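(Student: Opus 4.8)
The plan is to leverage the centralizer decomposition of Proposition~\ref{prop:centraliz-ext} together with the stratification of $W_{\mathrm{aff}}$ with respect to $\K$ recorded just above the corollary, so that the question collapses to a statement about $\Omega$ alone.

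First I would make precise the geometric comparison of regular faces that underlies the remark preceding the corollary. Let $x,y\in\K$ have the same regular face $\mathcal F$ in $\K$; I claim they then have the same regular face in $\A$, hence $(W_{\mathrm{aff}})_x=(W_{\mathrm{aff}})_y$ by Bourbaki's description of point stabilizers in $W_{\mathrm{aff}}$ (Proposition V.\S 3.1(vii) of \cite{Bo68}). Indeed, since $\K\subseteq\A$, every supporting hyperplane of $\A$ is also a supporting hyperplane of $\K$ and of $\mathcal F$; and if such a hyperplane $H$ passes through $x$, then $H\cap\mathcal F$ is a face of $\mathcal F$ meeting its relative interior, hence equals $\mathcal F$, so $H$ also passes through $y$. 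By symmetry the set of supporting hyperplanes of $\A$ through $x$ equals the one through $y$, so the regular faces of $x$ and $y$ in $\A$ coincide. This is exactly the assertion, made just before the corollary, that $W_{\mathrm{aff}}$ has stratified centralizers with respect to $\K$.

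Next I would prove the two implications, the first being formal and the second using Proposition~\ref{prop:centraliz-ext}. For the forward direction, assume $W_{\mathrm{ext}}$ has stratified centralizers with respect to $\K$. Since $\Omega\subseteq W_{\mathrm{ext}}$, one has $\Omega_x=\Omega\cap(W_{\mathrm{ext}})_x$ for every $x\in\K$; hence if $x,y\in\K$ share a regular face, then $(W_{\mathrm{ext}})_x=(W_{\mathrm{ext}})_y$ forces $\Omega_x=\Omega_y$, so $\Omega$ has stratified centralizers with respect to $\K$. Conversely, assume $\Omega$ has stratified centralizers with respect to $\K$, and let $x,y\in\K$ share a regular face. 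By the first step $(W_{\mathrm{aff}})_x=(W_{\mathrm{aff}})_y$, and by hypothesis $\Omega_x=\Omega_y$; feeding both into Proposition~\ref{prop:centraliz-ext} yields $(W_{\mathrm{ext}})_x=\Omega_x\rtimes(W_{\mathrm{aff}})_x=\Omega_y\rtimes(W_{\mathrm{aff}})_y=(W_{\mathrm{ext}})_y$, so $W_{\mathrm{ext}}$ has stratified centralizers with respect to $\K$.

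I do not expect a genuine obstacle here: the only place where geometry enters is the comparison of regular faces in $\K$ versus in $\A$ in the first step, handled by the elementary observation above about supporting hyperplanes, and everything else is a formal consequence of the semidirect-product decomposition $(W_{\mathrm{ext}})_x=\Omega_x\rtimes(W_{\mathrm{aff}})_x$ and the inclusion $\Omega\subseteq W_{\mathrm{ext}}$.
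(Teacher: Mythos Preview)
Your proposal is correct and follows the same approach as the paper: you use Proposition~\ref{prop:centraliz-ext} together with the fact that $W_{\mathrm{aff}}$ has stratified centralizers with respect to~$\K$, exactly as the paper does in the paragraph preceding the corollary. The only difference is that you spell out in more detail why same regular face in~$\K$ implies same regular face in~$\A$ (via the supporting-hyperplane argument, where the paper simply asserts the containment of regular faces), and you make both implications explicit rather than leaving them to the reader.
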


Consider the Kac coordinates of $\A$ given by 
$$
[b_0, b_1, \ldots, b_n] = b_1 \varpi^\vee_1 + \ldots + b_n \varpi^\vee_n
\quad\text{with}\quad b_0, b_1, \ldots, b_n \geq 0, \quad 
b_0 + m_1 b_1 + \cdots + m_n b_m = 1 
$$
The top dimensional faces of $\A$ are given by
$$
\left(\alpha_i, \, [b_0, b_1, \ldots, b_n] \right) = b_i = 0 \quad (i=1,\ldots,n),
\qquad
\left(\alpha_0, \, [b_0, b_1, \ldots, b_n] \right) = 
m_1 b_1 + \cdots + m_n b_m = 1
$$
thus, have Kac coordinates $b_i = 0$  $(i=0,\ldots,n)$.
Furthermore, $\K$ has top dimensional faces $H^0_j  \cap \K$ given by
\begin{eqnarray*}
\left(\alpha_0 + \alpha_j,\, [b_0, b_1, \ldots, b_n] \right) = m_1 b_1 + \cdots + m_n b_m + b_j = 1 \quad (j \in J)
\end{eqnarray*}
thus, with Kac coordinates $b_0 = b_j$ $(j \in J)$.

\begin{theo}
For type $A_n$, the extended Weyl group $W_{\mathrm{ext}}$ has stratified centralizers w.r.t.\ the Komrakov--Premet domain $\K$ if and only if $n+1$ is prime.
\end{theo}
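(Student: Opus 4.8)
The plan is to reduce everything to the fundamental group via Corollary~\ref{cor:wext}: for type $A_n$ every $m_i=1$, so $J=I$ and $\Omega\cong\Z/(n+1)\Z$, and it suffices to show that $\Omega$ has stratified centralizers with respect to $\K$ exactly when $n+1$ is prime. First I would fix the dictionary given by the Kac coordinates $[b_0,\dots,b_n]$ introduced above (so $b_i\ge0$, $\sum_i b_i=1$, with $b_i=(\alpha_i,x)$ for $i\in I$ and $b_0=1-(\alpha_0,x)$). Since $(\alpha_0+\alpha_j,x)=1-b_0+b_j$, the inequalities cutting out $\K$ become $b_j\le b_0$ for all $j\in I$; thus $\K$ is precisely the set of Kac vectors whose $0$-th entry is maximal, and (as recalled at the start of this appendix) its faces are cut out by the hyperplanes $H_i=\{b_i=0\}$ and $H_j^0=\{b_j=b_0\}$ for $i,j\in I$. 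By Theorem~\ref{thm:A} the generator $\omega$ of $\Omega$ is realized as a rotation of the cycle $\widetilde{A}_n$, hence acts on Kac coordinates by the cyclic shift $[b_0,b_1,\dots,b_n]\mapsto[b_n,b_0,\dots,b_{n-1}]$; consequently $\Omega_x=\langle\omega^{d(x)}\rangle$, where $d(x)\mid n+1$ is the minimal cyclic period of the sequence $(b_0,\dots,b_n)$.

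For the ``if'' direction I would assume $n+1$ is prime. Then the only subgroups of $\Omega$ are $\{1\}$ and $\Omega$, and $\Omega_x=\Omega$ holds precisely when the Kac vector is constant, i.e.\ $x$ equals the barycenter $\frac{1}{n+1}\sum_{j\in I}\varpi^\vee_j$. By \eqref{eq:kp-domain-vert} (case $L=I$) this point is a vertex of $\K$, hence coincides with its own regular face. Therefore, for every face $F$ of $\K$ other than this vertex, the relative interior of $F$ avoids the barycenter, so $\Omega_x=\{1\}$ for all $x$ there; thus $\Omega_x$ is constant on the relative interior of each face, and $\Omega$ has stratified centralizers.

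For the ``only if'' direction I would argue by contraposition. Suppose $n+1=pe$ with $p$ its least prime factor, so $e\ge2$. I would build $x\in\K$ whose Kac vector is $p$-periodic, with period block $(c_0,c_1,\dots,c_{p-1})$ chosen so that $c_0>c_i>0$ for $1\le i\le p-1$ and $\sum_{i<p}c_i=1/e$; then $b_0=c_0$ is maximal, equal to $b_j$ exactly when $p\mid j$, so $x\in\K$, while the block being aperiodic ($p$ prime, $c_0\ne c_1$) gives minimal period $p$ and hence $\Omega_x=\langle\omega^{p}\rangle\cong\Z/e\Z\ne\{1\}$. The bounding hyperplanes of $\K$ through $x$ are then exactly $H_{kp}^0$ for $k=1,\dots,e-1$, so the regular face of $x$ is $F:=\K\cap\bigcap_{k=1}^{e-1}H_{kp}^0$, of dimension $n-(e-1)=e(p-1)\ge2$. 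Inside $F$ the entries $b_j$ with $p\nmid j$ vary freely subject to $0<b_j<b_0$, so I would pick $y$ in the relative interior of $F$ with those entries pairwise distinct and different from $b_0$; then the Kac vector of $y$ has minimal period $n+1$, so $\Omega_y=\{1\}\ne\Omega_x$. Since $x$ and $y$ have the same regular face $F$ but different centralizers, $\Omega$—and therefore $W_{\mathrm{ext}}$—fails to have stratified centralizers with respect to $\K$.

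The translation of $\K$ and of the $\Omega$-action into Kac coordinates, and the verification that $x,y\in\K$ and share the regular face $F$, are routine bookkeeping once the dictionary is in place. The only point needing a little care is exhibiting the ``generic'' $y$: one must know that $F$ is not contained in the fixed-point set of any nontrivial subgroup of $\Omega$, which is immediate because those fixed loci are proper affine subspaces and $\dim F\ge1$, so their union cannot cover the relative interior of $F$. This, rather than any computation, is the heart of the argument.
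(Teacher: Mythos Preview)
Your proposal is correct and follows essentially the same approach as the paper: reduce to $\Omega$ via Corollary~\ref{cor:wext}, pass to Kac coordinates where $\Omega$ acts by cyclic shift, settle the prime case by noting the barycenter is a vertex of $\K$, and in the composite case exhibit a face of $\K$ whose relative interior contains points with different cyclic periods (hence different centralizers). Your treatment of the composite case---building explicit $x$ and $y$ with prescribed periods---is a bit more concrete than the paper's codimension estimate for $\mathrm{Fix}(\omega_1^k)$ inside its containing face, and your factorization $n+1=pe$ with $p$ the least prime factor sidesteps the paper's (unnecessary) coprimality hypothesis, but the idea is the same.
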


\begin{proof}
  For type $A_n$ we have $J=I$ and recall from Table \ref{table:omega} that $\Omega \simeq \Z_{n+1}$ generated by the cyclic permutation
    \[ \omega_1 = (n,n-1,\ldots,1,0),\]
    of simple roots corresponding to the extended Dynkin diagram, which acts in the Kac coordinates of $\A$ as the shift
$$
\omega_1 [b_0, b_1, \ldots, b_n] = [b_1, b_2, \ldots, b_n, b_0]
$$
For $0 \leq k \leq n+1$, it follows that $\omega_1^k$ is given by
$$
\omega_1^k [b_0, b_1, \ldots, b_n] = [b_k, b_{k+1},
\ldots, b_n, b_0, \ldots, b_{k-2}, b_{k-1}]
$$
with fixed point set
$$
\mathrm{Fix}(\omega_1^k) : = 
\A \cap
\Ker( \omega_1^k - 1 ) = \left\{ [b_0, b_1, \ldots, b_n] \mid b_i = b_{i + k \!\!\mod n+1} \right\}
$$
satisfying at least the $\ell := (n+1) - k$ independent equations
$$
b_0 = b_k, \quad b_1 = b_{k+1}, \quad \ldots \quad b_{\ell - 1} = b_n
$$
where $\ell + k = n + 1$, so that $\codim \mathrm{Fix}(\omega_1^k) \geq (n + 1) - k$.

Also, $[b_0, b_1, \ldots, b_n] \in \mathrm{Fix}(\omega_1^k)$ implies $b_0 = b_k = b_{2k} = \cdots$ so that, by the comments preceding the theorem, $\mathrm{Fix}(\omega_1^k)\cap\K$ is contained in the faces of $\K$ supported by 
$H^0_{k}, H^0_{2k}, \ldots$, thus in the face 
$$
\mathcal{F}_k = \K \cap \bigcap_{j=1}^n  H^0_{j k \!\!\mod n+1}
$$
whose codimension is (order of $k$ in $\Z_{n+1}$) $-1$. 

For $n+1$ prime, we claim that $\Omega$ has stratified center.
Indeed, in this case, the only subgroups of $\Omega$ are the trivial ones, thus $\Omega_x$ is either $\Omega$ or $1$.  Note that $\Omega_x = \Omega$ precisely when $x$ is a fixed point of the generator $\omega_1$ whose only fixed point, by the previous paragraph, is the baricenter of $\A$, given by
$\{ x_* \} = \bigcap_{j=1}^n H^0_j$,
so that $x_* = [b_0, b_0, \ldots, b_0]$ with $b_0 = 1/(n+1)$. 
Thus if $x = x_*$ then $\Omega_x = \Omega$, otherwise $\Omega_x = 1$. By the above intersection, $x_*$ is a vertex of $\K$, thus a regular face of $\K$, and our claim follows.

For $n+1$ composite, we claim that $\Omega$ does not have stratified center.  Indeed, let $n+1 = k p$, with $k \geq p \geq 2$ coprime, so that the order
of $k$ in $\Z_{n+1}$ is $p$.
By the first paragraph, $\mathrm{Fix}(\omega_1^k) \cap \K$ is contained in the face $\mathcal{F}_k$ of $\K$ with codimension
\begin{eqnarray*}
\codim_{\mathcal{F}_k} \mathrm{Fix}(\omega_1^k) \cap \K 
& \geq & ((n+1)-k) - (p-1) \\
&=& kp - k - p + 1 \\
&=&  k(p-1) - p + 1 \geq k - p + 1 \geq 1
\end{eqnarray*}
It follows that $\mathcal{F}_k$ is the regular face of points $x \in 
\mathcal{F}_k \cap \mathrm{Fix}(\omega_1^k)$ with $\left< \omega_1^k \right > \leq \Omega_x$ and of $y \in \mathcal{F}_k \setminus \mathrm{Fix}(\omega_1^k)$ with $\left< \omega_1^k \right > \not\leq \Omega_{y}$,  so that our claim follows from Corollary \ref{cor:wext}.

\end{proof}

\begin{example}\label{ex:A3_non_stratified}
Let us picture how the stratified centralizers property for $\Omega$ fails for the $A_3$ type in the previous proof.

Denote by $\widecheck{ab}$ the baricenter of the segment
$\overline{ab}$.
The top faces of $\K$ are supported on the triangles $\mathcal{F}_i = H^0_i \cap \A$ given by
$$
\mathcal{F}_1\text{ with vertices }\widecheck{01},2,3, \quad
\mathcal{F}_2\text{ with vertices }\widecheck{02},1,3, \quad
\mathcal{F}_3\text{ with vertices }\widecheck{03},1,2.
$$
\begin{center}
\def\svgwidth{3.5cm}
\begingroup%
  \makeatletter%
  \providecommand\color[2][]{%
    \errmessage{(Inkscape) Color is used for the text in Inkscape, but the package 'color.sty' is not loaded}%
    \renewcommand\color[2][]{}%
  }%
  \providecommand\transparent[1]{%
    \errmessage{(Inkscape) Transparency is used (non-zero) for the text in Inkscape, but the package 'transparent.sty' is not loaded}%
    \renewcommand\transparent[1]{}%
  }%
  \providecommand\rotatebox[2]{#2}%
  \newcommand*\fsize{\dimexpr\f@size pt\relax}%
  \newcommand*\lineheight[1]{\fontsize{\fsize}{#1\fsize}\selectfont}%
  \ifx\svgwidth\undefined%
    \setlength{\unitlength}{205.13668874bp}%
    \ifx\svgscale\undefined%
      \relax%
    \else%
      \setlength{\unitlength}{\unitlength * \real{\svgscale}}%
    \fi%
  \else%
    \setlength{\unitlength}{\svgwidth}%
  \fi%
  \global\let\svgwidth\undefined%
  \global\let\svgscale\undefined%
  \makeatother%
  \begin{picture}(1,1.0096533)%
    \lineheight{1}%
    \setlength\tabcolsep{0pt}%
    \put(0,0){\includegraphics[width=\unitlength,page=1]{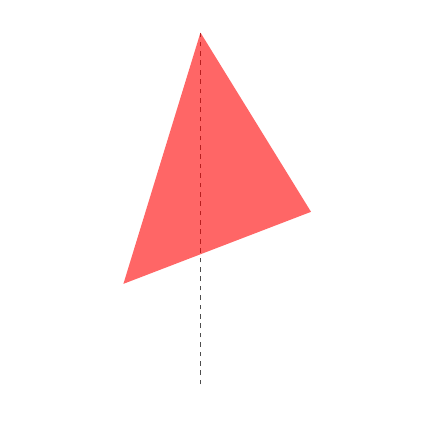}}%
    \put(0.44395926,0.00602685){\makebox(0,0)[lt]{\lineheight{1.25}\smash{\begin{tabular}[t]{l}$0$\end{tabular}}}}%
    \put(0.75107111,0.4813192){\makebox(0,0)[lt]{\lineheight{1.25}\smash{\begin{tabular}[t]{l}$1$\end{tabular}}}}%
    \put(-0.00208512,0.59100174){\makebox(0,0)[lt]{\lineheight{1.25}\smash{\begin{tabular}[t]{l}$2$\end{tabular}}}}%
    \put(0.44395894,0.96392354){\makebox(0,0)[lt]{\lineheight{1.25}\smash{\begin{tabular}[t]{l}$3$\end{tabular}}}}%
    \put(0.12953478,0.2912023){\makebox(0,0)[lt]{\lineheight{1.25}\smash{\begin{tabular}[t]{l}$\widecheck{02}$\end{tabular}}}}%
    \put(0.61213965,0.73724625){\makebox(0,0)[lt]{\lineheight{1.25}\smash{\begin{tabular}[t]{l}$\widecheck{13}$\end{tabular}}}}%
    \put(0,0){\includegraphics[width=\unitlength,page=2]{tetrahedron.pdf}}%
  \end{picture}%
\endgroup%

\end{center}
We have that $\Omega \simeq \Z_4$ is generated by $\omega_1 = (3,2,1,0)$.
Let $\sigma := \omega_1^2 = (02)(13)$ is a product of commuting reflections: $(02)$ around $\mathcal{F}_2$, $(13)$ around the triangle
$$
\mathcal{F}_0\text{ with vertices }\widecheck{13},0,2.
$$
which is not a face of $\K$.

It follows that
$$
\mathrm{Fix}(\sigma) = \mathrm{Fix}(02) \cap \mathrm{Fix}(13) 
= \mathcal{F}_2 \cap \mathcal{F}_0
$$
is the segment from $\widecheck{02}$ to $\widecheck{13}$.
It also follows that $\mathrm{Fix}(\sigma) \cap \K \subseteq \mathcal{F}_2 \cap \K$ with codimension~1.  
Thus $\mathcal{F}_2' :=\mathcal{F}_2 \cap \mathcal{K}$ is the regular face of points on the segment $x \in \mathcal{F}_2' \cap \mathrm{Fix}(\sigma)$ with $\left< \sigma \right > \leq \Omega_x$ and of points outside the segment $y \in \mathcal{F}_2' \setminus \mathrm{Fix}(\sigma)$ with $\left< \sigma \right > \not\leq \Omega_{y}$. It follows that $\Omega$ does not have stratified centralizers w.r.t.\ $\mathcal{K}$.

%
\end{example}

In the previous example, note that the failure in the stratification occurs because of the lack of the face $\mathcal{F}_0$ in $\K$, fixed by $\phi_0 = (13)$. But since $\phi_0$ is the generator of $\Aut(D)$, $\mathcal{F}_0$ is precisely the additional face of the 
fundamental domain $\mathcal{L}$ for $\Aut(\A)$ obtained from the maximal balanced minuscule root $v_0 = \alpha_1 - \alpha_3$.  Thus, $\mathrm{Fix}(\sigma)$ is the intersection of faces of the fundamental domain $\mathcal{L}$ of $\Aut(\A)$.

This raises the question if extending $\Omega$ to $\Aut(\A)$, one gets stratified centralizers w.r.t.\ $\mathcal{L}$?  
The following example shows that is not the case.

\begin{example}
Consider the root system $A_3$ and the notation of the previous example. By Table \ref{table:thm1} we have that $\Aut(\A) \simeq S_3$ is generated by $\tau_0 := \phi_0$ and $\tau_1 := \omega_1 \phi_0$.  Then 
$$
\tau_1[b_0,b_1,b_2,b_3] = \omega_1 \phi_0[b_0,b_1,b_2,b_3]  = 
\omega_1 [b_0,b_3,b_2,b_1] = [b_3,b_2,b_1,b_0] = (03)(12)
$$
so that $\tau_1$ is the product of commuting reflections: $(03)$ around $\mathcal{F}_3$, $(12)$ around the triangle 
$$
\mathcal{F}_4\text{ with vertices }\widecheck{12},0,3.
$$
which is not a face of $\mathcal{L}$.
It follows that
$$
\mathrm{Fix}(\tau_1) = \mathrm{Fix}(03) \cap \mathrm{Fix}(12) 
= \mathcal{F}_3 \cap \mathcal{F}_4
$$
is the segment from $\widecheck{03}$ to $\widecheck{12}$. 

As in the previous example, it follows that $\mathrm{Fix}(\tau_1) \cap \K \subseteq \mathcal{F}_3 \cap \mathcal{L}$ with codimension 1.  Thus, $\mathcal{F}_3' :=\mathcal{F}_3 \cap \mathcal{L}$ is the regular face of points on the segment $x \in \mathcal{F}_3' \cap \mathrm{Fix}(\tau_1)$ with $\left< \tau_1 \right > \leq \Omega_x$ and of points outside the segment $y \in \mathcal{F}_3' \setminus \mathrm{Fix}(\tau_1)$ with $\left< \tau_1 \right > \not\leq \Omega_{y}$. It follows that $\Aut(\A)$ does not have stratified centralizers w.r.t.\ $\mathcal{L}$.
\end{example}


\begin{rem}
\label{rem:centraliz}
In \cite{SS18, SP24} 
the authors consider the centralizer of $x$ in the Weyl group modulo, respectively, the coroot and coweight lattice, given by
\begin{eqnarray*}
W^x_\circ := \{ w \in W \mid wx = x + \gamma, \quad \gamma \in Q^\vee \} 
&\vartriangleleft&
W^x := \{ w \in W \mid wx = x + \gamma, \quad \gamma \in P^\vee \}
\end{eqnarray*}

They relate to centralizers on the affine and the extended Weyl group as follows.

\begin{prop}
Projection onto the $W$-component \eqref{eq:wext-w-component}
restricts to isomorphisms
\begin{equation}
\label{eq:proj-centralizer}
(W_{\mathrm{aff}})_x \to W^x_\circ
\qquad
(W_{\mathrm{ext}})_x \to W^x
\end{equation}
from which we get the isomorphisms
\begin{equation}
\Omega_x  \leftarrow (W_{\mathrm{ext}})_x /  (W_{\mathrm{aff}})_x
\to W^x/W^x_\circ
\end{equation}
\end{prop}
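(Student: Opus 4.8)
The plan is to work directly from the semidirect product decompositions $W_{\mathrm{aff}} = Q^\vee \rtimes W$ and $W_{\mathrm{ext}} = P^\vee \rtimes W$ recorded in \eqref{eq:wext-w-component}, translating the fixed-point condition into a condition on the $W$-component. Writing a general element of $W_{\mathrm{ext}}$ uniquely as $\zeta = t_\gamma w$ with $\gamma \in P^\vee$ and $w \in W$, projection onto the $W$-component sends $\zeta \mapsto w$, while $\zeta x = t_\gamma w x = wx + \gamma$. Hence $\zeta \in (W_{\mathrm{ext}})_x$ if and only if $\gamma = x - wx$, which forces $x - wx \in P^\vee$, i.e. $w \in W^x$; conversely every $w \in W^x$ arises in this way, for the \emph{unique} choice $\gamma = x - wx$ (using that $P^\vee$ is $W$-invariant, so that $t_{x-wx} w$ indeed lies in $W_{\mathrm{ext}}$). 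Restricting the same computation to $\gamma \in Q^\vee$ gives the analogous dictionary between $(W_{\mathrm{aff}})_x$ and $W^x_\circ$.

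From this dictionary the two isomorphisms in \eqref{eq:proj-centralizer} follow formally. The $W$-component projection $W_{\mathrm{ext}} \to W$ is a group homomorphism whose kernel is the translation subgroup $P^\vee$; since a translation fixing $x$ must be trivial, we have $(W_{\mathrm{ext}})_x \cap P^\vee = \{0\}$ and $(W_{\mathrm{aff}})_x \cap Q^\vee = \{0\}$, so the projection is injective on each stabilizer. Surjectivity onto $W^x$, resp. $W^x_\circ$, is precisely the "conversely" clause above, the inverse maps being $w \mapsto t_{x-wx}\, w$. This gives the isomorphisms $(W_{\mathrm{aff}})_x \xrightarrow{\ \sim\ } W^x_\circ$ and $(W_{\mathrm{ext}})_x \xrightarrow{\ \sim\ } W^x$, compatibly, since the second restricts to the first.

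For the quotient statement I would argue as follows. As $W_{\mathrm{ext}} = W_{\mathrm{aff}} \rtimes \Omega$, the subgroup $W_{\mathrm{aff}}$ is normal in $W_{\mathrm{ext}}$, hence $(W_{\mathrm{aff}})_x = W_{\mathrm{aff}} \cap (W_{\mathrm{ext}})_x$ is normal in $(W_{\mathrm{ext}})_x$; likewise $W^x_\circ \vartriangleleft W^x$ (as noted in Remark~\ref{rem:centraliz}). The isomorphism $(W_{\mathrm{ext}})_x \to W^x$ carries the normal subgroup $(W_{\mathrm{aff}})_x$ isomorphically onto $W^x_\circ$, so it passes to the quotients and yields an isomorphism $(W_{\mathrm{ext}})_x/(W_{\mathrm{aff}})_x \xrightarrow{\ \sim\ } W^x/W^x_\circ$. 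On the other hand, Proposition~\ref{prop:centraliz-ext} gives $(W_{\mathrm{ext}})_x = \Omega_x \rtimes (W_{\mathrm{aff}})_x$, so $(W_{\mathrm{ext}})_x/(W_{\mathrm{aff}})_x \cong \Omega_x$, completing the displayed chain of isomorphisms.

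I do not expect any genuine obstacle here: the argument is a bookkeeping exercise with the semidirect-product conventions. The only two points that need a moment of care are that the decomposition $\zeta = t_\gamma w$ is unique (so the fixed-point equation pins down $\gamma$ and the projection is actually bijective on stabilizers, not merely surjective), and that the containment $x - wx \in P^\vee$ for $w \in W^x$ is by definition of $W^x$, so the surjectivity argument is not circular.
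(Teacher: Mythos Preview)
Your proof is correct and follows essentially the same approach as the paper: write $\zeta = t_\gamma w$, translate $\zeta x = x$ into $\gamma = x - wx$, read off that the $W$-component projection is injective (no nontrivial translation fixes $x$) and surjective onto $W^x$ (resp.\ $W^x_\circ$), and then invoke Proposition~\ref{prop:centraliz-ext} for the quotient. The only quibble is the parenthetical ``using that $P^\vee$ is $W$-invariant'': what you actually need there is $x - wx \in P^\vee$, which is the definition of $w \in W^x$ (as you yourself note in the final paragraph), not the $W$-invariance of $P^\vee$.
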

\begin{proof}

  Let $t_\gamma w \in W_{\mathrm{aff}}$, with $w \in W$, $\gamma \in Q^\vee$, 
such that $t_\gamma w x = wx + \gamma = x$. Then $wx = x - \gamma$, so that $w \in W^x_\circ$ and the projection restricts to $(W_{\mathrm{aff}})_x \to W^x_\circ$.
It is clearly injective, since $w=1$ implies that $x + \gamma = x$, so that $\gamma = 0$. And it is clearly surjective, since $w$ such that $wx = x + \gamma$ is the $W$-component projection of $t_{-\gamma} w \in (W_{\mathrm{aff}})_x$.  For the second isomorphism, the argument is the same, taking $\gamma \in P^\vee$.
The first line of isomorphisms together with Proposition \ref{prop:centraliz-ext} then implies the second line of isomorphisms.
\end{proof}

It follows from the previous considerations in this section that
$W^x_\circ$ has stratified centralizers w.r.t.\ $\K$ (and $\A$) but $W^x$ in general does not.
\end{rem}

We finish by mentioning some implications of the notion of stratified centralizers in equivariant algebraic topology. Recall from \cite{GaPhD} that, if $G$ is a group and $X$ is a $G$-space, then the (singular) cohomology $H^*(X;\Z)$ naturally inherits the structure of a $\Z[G]$-module. Under some mild hypotheses,
namely $G$ discrete and $X$ locally contractible,
this module is the cohomology of the derived global section complex $R\Gamma(X;\Z)$, an element of the derived category $D^b(\Z[G])$ of $\Z[G]$-modules, so that the action of $G$ on $H^*(X;\Z)$ may be lifted to the level of complexes in the derived category. In contrast to the classical setting, we do not have an isomorphism $H^*(X;\Z)\simeq R\Gamma(X;\Z)$ in $D^b(\Z[G])$, so the action of $G$ on $R\Gamma(X;\Z)$ contains {\it a priori} a more precise information on the $G$-action on $X$ than the cohomology $H^*(X;\Z)$.

The problem then becomes to find a convenient combinatorial description of the (huge) complex $R\Gamma(X;\Z)$ and this is where the notion of $G$-equivariant CW-complex comes into play. A classical CW-structure on $X$ is called {\it $G$-equivariant} if $G$ acts on the set of cells of the structure and if the following local-to-global condition is satisfied:
\begin{equation}\label{eq:cond}
\text{For each $x\in X$, if $e\subseteq X$ is the (unique) cell containing $x$, then we have $G_x=G_e$.}
\end{equation}
In other words, if an element of $G$ 
stabilizes a cell,
then it must fix any of its points. To a $G$-equivariant CW-structure is attached its cellular cochain complex $C^*_{\rm cell}(X;\Z)$, a complex of $\Z[G]$-modules. As detailed in \cite{GaPhD}, this complex is isomorphic to $R\Gamma(X;\Z)$ in $D^b(\Z[G])$, providing a nice combinatorial interpretation of the derived sections, which is moreover well-defined up to equivariant homotopy. This motivates the study and construction of $G$-equivariant CW-structures.

Now, when $G\subseteq\E(V)$ acts on a Euclidean space $V$, with fundamental polytope $\mathcal{F}$, then the ($G$-translates of the) face lattice of \cancel{the} $\mathcal{F}$ induces a $G$-equivariant simplicial structure on $V$ in the above sense, if and only if $G$ has stratified centralizers, in the sense of the Definition \ref{def:stratified_centralizers}.

The fact that the affine Weyl group $W_{\rm aff}$ has stratified centralizers has been exploited by the second author in \cite{Ga23} to obtain a triangulation of a maximal torus of a simply-connected compact Lie group, equivariant with respect to the action of the Weyl group. In the adjoint case, because the extended group $W_{\rm ext}$ does not satisfy this property w.r.t. the Komrakov--Premet polytope $\mathcal{K}$, the second author rather considered the {\it barycentric subdivision} of the alcove $\mathcal{A}$, \cite[Theorem 3.2.3]{Ga23}.

Thus, the fact that $\Aut(\mathcal{A})$ does not have stratified centralizers w.r.t. the fundamental sub-polytope $\mathcal{L}\subseteq\mathcal{K}$ introduced above (even while $\Aut(\mathcal{A})$ is abstract Coxeter), can be reformulated by saying that the face lattice of $\mathcal{L}$ does {\it not} induce an
$(\Aut(\mathcal{A})\ltimes W)$-equivariant triangulation of $V$. By \cite[Lemma 3.2.1]{Ga23}, the barycentric subdivision of $\mathcal{A}$ does such a job, but has a quite intricate combinatorics, in comparison to the polytope $\mathcal{L}$.

\end{document}